\newcommand{\N}{\mathbb{N}}
\newcommand{\Z}{\mathbb{Z}}
\newcommand{\R}{\mathbb{R}}
\newcommand{\C}{\mathbb{C}}
\newcommand{\Half}{\mathcal{H}}
\newcommand{\D}{\mathcal{D}}
\newcommand{\Fund}{\mathcal{F}}
\newcommand{\eps}{\varepsilon}
\newcommand{\U}{\mathcal{U}}
\newcommand{\I}{\mathcal{I}}
\newcommand{\Red}{\mathcal{R}}
\renewcommand{\S}{\mathcal{S}}
\newcommand{\A}{\mathcal{A}}
\newcommand{\V}{\mathcal{V}}
\newcommand{\from}{\colon}
\newcommand{\st}{\colon}
\DeclareMathOperator{\im}{Im}
\DeclareMathOperator{\re}{Re}
\DeclareMathOperator{\M}{\mathcal{M}}
\DeclareMathOperator{\bigO}{\mathit{O}}
\DeclareMathOperator{\FD}{FD}
\DeclareMathOperator{\Sp}{Sp}
\DeclareMathOperator{\SL}{SL}
\DeclareMathOperator{\Bstep}{BStep}
\DeclareMathOperator{\Sqrt}{sqrt}
\DeclareMathOperator{\tr}{Tr}
\DeclarePairedDelimiter{\ceil}{\lceil}{\rceil}
\DeclarePairedDelimiter{\norm}{\lVert}{\rVert}
\DeclarePairedDelimiter{\nind}{\lVert}{\rVert}
\DeclarePairedDelimiter{\abs}{|}{|}
\DeclarePairedDelimiter{\set}{\{}{\}}
\DeclarePairedDelimiter{\paren}{(}{)}
\renewcommand{\th}{\ensuremath{{}^{\mathrm{th}}}\ }
\newcommand{\first}{\ensuremath{{}^{\mathrm{st}}}\ }
\newcommand{\mat}[4]{\left(\begin{matrix}#1&#2\\#3&#4\end{matrix}\right)}
\newtheorem{prop}{Proposition}[section]
\newtheorem{lem}[prop]{Lemma}
\newtheorem{thm}[prop]{Theorem}
\Crefname{prop}{Proposition}{Propositions}
\newtheorem{cor}[prop]{Corollary}
\theoremstyle{definition}
\newtheorem{rem}[prop]{Remark}
\newtheorem{algo}[prop]{Algorithm}
\title{Certified Newton schemes for the evaluation of low-genus theta
  functions}
\author{Jean Kieffer} \date{\today}
\begin{document}

\maketitle

\begin{abstract}
  Theta functions and theta constants in low genus, especially genus~1
  and~2, can be evaluated at any given point in quasi-linear time in
  the required precision using Newton schemes based on Borchardt
  sequences. Our goal in this paper is to provide the necessary tools
  to implement these algorithms in a provably correct way. In
  particular, we obtain uniform and explicit convergence results in
  the case of theta constants in genus~1 and~2, and theta functions in
  genus~1: the associated Newton schemes will converge starting from
  approximations to~$N$ bits of precision for~$N = 60$,~$300$,
  and~$1600$ respectively, for all suitably reduced arguments. We also
  describe a uniform quasi-linear time algorithm to evaluate genus~$2$
  theta constants on the Siegel fundamental domain.  Our main tool is
  a detailed study of Borchardt means as multivariate analytic
  functions.
\end{abstract}

\section{Introduction}
\label{sec:intro}

% What are theta functions and theta constants.

Let~$g\geq 1$ be an integer, and let~$\Half_g$ be the Siegel upper
half space of degree~$g$, which consists of all symmetric $g\times g$
complex matrices with positive definite imaginary
part. Let~$a,b\in\{0,1\}^g$. Then the \emph{theta function} of
genus~$g$ and characteristic~$(a,b)$ is defined
on~$\C^g\times \Half_g$ by the following exponential series:
\begin{equation}
  \label{eq:theta-function}
  \theta_{a,b}(z, \tau) = \sum_{m\in \Z^g}
  \exp\left( i \pi (m + \tfrac a2)^t\tau (m+\tfrac a2)
    + 2 i \pi (m+\tfrac a2) ^t (z + \tfrac b2)\right).
\end{equation}
Theta functions appear in many areas of mathematics, from partial
differential equations to arithmetic geometry; an overview is given
in~\cite{igusa_ThetaFunctions1972, mumford_TataLecturesTheta1983,
  mumford_TataLecturesTheta1984}. They have symmetries with respect to
the action of the modular
group~$\Sp_{2g}(\Z)$~\cite[§II.5]{mumford_TataLecturesTheta1983}, and
they also satisfy the Riemann relations, a broad generalization of the
well-known duplication
formula~\cite[§II.6]{mumford_TataLecturesTheta1983}.  \emph{Theta
  constants} are the values of these functions taken at~$z=0$, and are
of interest in number theory. Each theta constant is a Siegel modular
form, and every Siegel modular form for~$\Sp_{2g}(\Z)$ has an
expression as a rational fraction in terms of theta
constants~\cite{igusa_GradedRingThetaconstants1964}; even a
polynomial, if~$g\leq 3$~\cite{igusa_GradedRingThetaconstants1964,
  freitag_VarietyAssociatedRing2019}.

% Computing them. Computational model.

In this paper, we are interested in algorithms to evaluate theta
constants at a given point~$\tau\in \Half_g$, or more generally theta
functions at a given point~$(z, \tau)$, to precision~$N$ for some
integer~$N\geq 0$. In the whole paper, we consider absolute precision:
the output will be a finitely encodable (for instance, dyadic) complex
number~$x$ such that~$\abs{\theta(z,\tau) - x} \leq 2^{-N}$.

% The naive approach.

Two main approaches to computing theta functions exist. The first one,
sometimes called the \emph{naive algorithm}, consists in computing
partial sums of the series~\eqref{eq:theta-function} and obtaining an
upper bound on the modulus of its
tail~\cite{deconinck_ComputingRiemannTheta2004,
  enge_ShortAdditionSequences2018,
  frauendiener_EfficientComputationMultidimensional2019,
  agostini_ComputingThetaFunctions2021}. The resulting algorithm can
be applied in any genus; its complexity is~$O(\M(N) N^{g/2})$
if~$(z,\tau)$ is
fixed~\cite[Prop.~4.2]{labrande_ComputingThetaFunctions2016}, and can
be made uniform in~$(z,\tau)$ if this input is suitably
reduced~\cite[Thm.~3 and Thm.~8]{deconinck_ComputingRiemannTheta2004}.

% Dupont's approach.

The second approach was first described by
Dupont~\cite{dupont_FastEvaluationModular2011,
  dupont_MoyenneArithmeticogeometriqueSuites2006} in the case of theta
constants of genus~$g\leq 2$. It combines the arithmetic-geometric
mean (AGM), and higher-dimensional analogues of the AGM called
Borchardt means, with Newton iterations, and claims a complexity
of~$O\paren[\big]{\M(N)\log N}$ binary operations. Extensions to theta
functions in genus~$g\leq 2$, as well as higher genera, were then
described in~\cite{labrande_ComputingJacobiTheta2018,
  labrande_ComputingThetaFunctions2016}. In practice, these algorithms
beat the naive method for precisions greater than a few hundred
thousand bits for~$g=1$, and a few thousand bits
for~\mbox{$g=2$}. This improvement is especially welcome in
number-theoretic applications, where huge precisions are often
necessary to recognize rational numbers from their complex
approximations~\cite{enge_ComputingModularPolynomials2009,
  enge_ComplexityClassPolynomial2009,
  enge_ComputingClassPolynomials2014,
  kieffer_EvaluatingModularEquations2021}, although the naive method
remains superior for~$g=1$ in the current range of practical
applications.

% Proving correctness.

In order to prove the correctness of an algorithm based on Newton's
method, and establish an upper bound on its complexity, the first step
is usually to show that the linearized system that Newton's method
uses is actually invertible. A proof of this fact is currently missing
for~$g\geq
2$~\cite[§10.2]{dupont_MoyenneArithmeticogeometriqueSuites2006},
\cite[Conj.~3.6]{labrande_ComputingThetaFunctions2016}.  For~$g=1$,
the invertibility of this linear system was
proved~\cite[Prop.~11]{dupont_FastEvaluationModular2011},
\cite[Prop.~4.4]{labrande_ComputingJacobiTheta2018}, but the rate of
convergence of the resulting Newton scheme was not made explicit. This
makes these algorithms difficult to implement in a provably correct
way.

% Our goal and methods.

The purpose of the present paper is to turn the quasi-linear time
algorithms for theta constants in genus~$1$ and~$2$, as well as theta
functions in genus~$1$, into provably correct algorithms. This is done
by giving explicit upper bounds on derivatives of certain analytic
functions derived from Borchardt sequences on explicit polydisk
neighborhoods of the points where Newton's method is applied. % ; we find
% this ``locally uniform'' approach to be more tractable than
% considering all derivatives at one point as
% in~\cite[Chap.~8]{blum_ComplexityRealComputation1998}.
In the case of genus~$2$ theta constants, we also show how to combine
Newton's method with the naive algorithm to obtain a uniform
quasi-linear complexity on the Siegel fundamental domain, thus
generalizing earlier constructions in
genus~$1$~\cite[Thm.~5]{dupont_FastEvaluationModular2011},
\cite[§4.2]{labrande_ComputingJacobiTheta2018}. In the case of theta
functions in genus~$2$, and higher genera, we are no longer able to
prove that Newton's method will succeed for all inputs. However, if it
succeeds, then the same methods can be applied to certify the
correctness of the result.

% Organization.

The paper is organized as follows. In \cref{sec:newton}, we give a
general result of explicit convergence for Newton schemes involving
multivariate analytic functions. We study Borchardt means as analytic
functions in detail in \cref{sec:borchardt}. In \cref{sec:dupont}, we
review the existing Newton schemes for the computation of theta
functions; then, we obtain explicit values for the magnitudes and
radii of convergence of the analytic functions defining them, and thus
explicit convergence results. Finally, we present the uniform
algorithm to compute genus~$2$ theta constants in \cref{sec:unif}.

\section{Certified multivariate Newton iterations}
\label{sec:newton}

In this section, we are interested in designing provably correct
Newton schemes for multivariate analytic functions, assuming that the
system is linearized using finite differences at each step.  More
precisely, let~$\U$ be an open set in~$\C^r$, let~$f\from \U\to \C^r$
be an analytic function, and let~$x_0\in \U$; assuming that~$f(x_0)$
is known and that~$f$ can be evaluated at any point, we are interested
in building a Newton scheme to compute~$x_0$ itself.

First, we give an explicit convergence result provided that the first
and second derivatives of~$f$ are locally bounded around~$x_0$, and
that~$df(x_0)$ is invertible. Using Cauchy's formula, we also obtain
explicit convergence estimates if we simply assume that~$f$ is bounded
on a certain polydisk around~$x_0$. Finally, we translate these
theoretical results into the concrete world of finite-precision
arithmetic. All these results are certainly well-known in spirit, but
we were unfortunately unable to find sufficiently explicit results in
the literature.

Let us introduce some notation. We always consider~$\C^r$ as a normed
vector space for the~$L^\infty$ norm, denoted simply
by~$\norm{\cdot}$: in terms of coordinates, we have
\begin{displaymath}
  \norm{(x_1,\ldots,x_r)} = \max_{1\leq j\leq r} \abs{x_j}.
\end{displaymath}
If~$\rho>0$ and~$x\in \C^r$, we denote by~$\D_\rho(z)$ the open ball
(i.e.~the polydisk) centered in~$z$ of radius~$\rho$.  We also denote the induced
norm of (multi-)linear operators by~$\nind{\cdot}$.

Let~$(e_i)$ be the canonical basis of~$\C^r$, and denote the
coordinates by~$x_1,\ldots, x_r$. If~$x\in \U$, then we have
\begin{displaymath}
  df(x) = \sum_{i=1}^r \frac{\partial f}{\partial x_i}(x) \,dx_i,
\end{displaymath}
where~$dx_i$ is seen as the linear form~$x\mapsto x_i$.
% Todo: formulate this for real, C^2?
For~$\eta>0$ such that~$\D_\eta(x) \subset \U$, we also define
\begin{displaymath}
  \FD_\eta f(x) = \sum_{i=1}^r \frac{f(x+\eta e_i) - f(x)}{\eta} \,dx_i.
\end{displaymath}
This linear operator is an approximation of~$df(x)$ using finite
differences.

Assume we already know~$x\in \U$ such that~$\norm{x-x_0}\leq \eps$ for
some~$\eps>0$. Then we can formulate a Newton iteration step to refine
the approximation~$x$ of~$x_0$ as follows: simply replace~$x$
by~$x + h$, where
\begin{displaymath}
  h = df(x)^{-1} (f(x_0)-f(x)).
\end{displaymath}
In the finite differences version, we take instead:
\begin{displaymath}
  h = \FD_\eta f (x)^{-1} (f(x_0)-f(x)),
\end{displaymath}
where~$\eta>0$ is a suitably chosen small parameter. Then, provided
that~$\eps$ is small enough,~$\norm{x+h-x_0}$ will be of the order
of~$\eps^2$, ensuring quadratic convergence of the Newton
iteration.

\begin{prop}
  \label{prop:theoretical-newton}
  Let~$\U\subset\C^r$ be an open set, let~$f\from \U\to\C^r$ be an
  analytic function, and let~$x_0\in\U$. Let~$\rho>0$
  and~$B_1,B_2,B_3 \geq 1$ be real numbers such
  that~$\D_\rho(x_0)\subset\U$ and the following inequalities are
  satisfied:  
  \begin{enumerate}
  \item $\nind{df(x)}\leq B_1$ and~$\nind{d^2f(x)}\leq B_2$ for all~$x\in \D_\rho(x_0)$;
  \item $df(x_0)$ is invertible and $\nind{df(x_0)^{-1}}\leq B_3$.
  \end{enumerate}
  Let~$\eps,\eta>0$ be such that
  \begin{displaymath}
    \eps \leq \min\set[\Big]{\frac{\rho}{2}, \frac{1}{2B_2B_3}}
    \quad \text{and}\quad
    \eta \leq \frac{\eps}{4r B_1 B_3}. %\min\set{\frac{\eps}{4r B_1 B_3}, \frac{1}{2r B_2 B_3}}.
  \end{displaymath}
  Then, for each~$x\in \C^r$ such that~$\norm{x - x_0}\leq \eps$, if
  we set
  \begin{displaymath}
    h = \FD_\eta f (x)^{-1} (f(x_0)-f(x)),
  \end{displaymath}
  we will have
  \begin{displaymath}
    \norm{x + h - x_0} \leq 2 B_2 B_3 \eps^2.
  \end{displaymath}
\end{prop}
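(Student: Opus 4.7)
The plan is to compare the finite-difference operator $A = \FD_\eta f(x)$ with both $df(x)$ and $df(x_0)$, invert it via a Neumann series, and combine the result with a Taylor expansion of $f$ at $x$. The inclusion $\D_\eta(x)\subset\D_\rho(x_0)$, which follows from $\eps+\eta\leq\rho$, ensures that the first- and second-derivative bounds apply at every point we need to evaluate.

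To invert $A$, I would first apply componentwise Taylor at $x$ along each direction $e_i$: combined with $\nind{d^2f}\leq B_2$, this gives $\norm{(f(x+\eta e_i)-f(x))/\eta-\partial f/\partial x_i(x)}\leq \eta B_2/2$. Summing over the $r$ coordinate directions and using the definition of the $L^\infty$ operator norm yields $\nind{A-df(x)}\leq r\eta B_2/2$. A similar estimate along the segment from $x_0$ to $x$ gives $\nind{df(x)-df(x_0)}\leq B_2\eps$, hence $\nind{A-df(x_0)}\leq B_2\eps+r\eta B_2/2$ by the triangle inequality. Using the assumed bounds on $\eps$ and $\eta$ (together with $B_1,B_3\geq 1$), this is strictly less than $1/B_3$; the Neumann series applied to $df(x_0)^{-1}A$ then inverts $A$ and bounds $\nind{A^{-1}}$ by an explicit small constant times $B_3$.

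For the main estimate, a second application of Taylor at $x$ produces $R = f(x_0)-f(x)-df(x)(x_0-x)$ with $\norm{R}\leq B_2\eps^2/2$. Since $Ah=f(x_0)-f(x)$, one has
\[
  A(x+h-x_0)=A(x-x_0)+(f(x_0)-f(x))=(A-df(x))(x-x_0)+R,
\]
so $\norm{x+h-x_0}\leq \nind{A^{-1}}(r\eta B_2\eps/2+B_2\eps^2/2)$, and the hypothesis $\eta\leq\eps/(4rB_1B_3)$ absorbs the $r\eta$-term into the Taylor remainder, yielding an $O(B_2B_3\eps^2)$ bound.

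The main obstacle is constant tracking: to reach exactly $2B_2B_3\eps^2$, the Neumann bound on $\nind{A^{-1}}$ must be kept close to $B_3$, so each contribution to $B_3\nind{A-df(x_0)}$ has to be controlled sharply. The presence of $B_1$ in the $\eta$-constraint is not essential for this theoretical statement---any $\eta\leq c\eps/(rB_2B_3)$ would suffice---but is presumably chosen with the subsequent finite-precision refinement in mind, where $B_1$ governs how much the $1/\eta$ denominator amplifies round-off errors in the finite differences.
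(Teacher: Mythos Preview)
Your argument is correct, and once the constants are tracked it actually yields $\norm{x+h-x_0}\leq \tfrac{10}{7}B_2B_3\eps^2<2B_2B_3\eps^2$, so the ``main obstacle'' you flag is not one. The paper takes a slightly different route: instead of inverting $A=\FD_\eta f(x)$ directly, it first inverts $df(x)$ (getting $\nind{df(x)^{-1}}\leq 2B_3$), defines the exact Newton step $h_0=df(x)^{-1}(f(x_0)-f(x))$, and bounds $\norm{x+h_0-x_0}$ and $\norm{h-h_0}$ separately, each by $B_2B_3\eps^2$. In that decomposition the bound on $\norm{h-h_0}$ passes through $\norm{f(x_0)-f(x)}\leq B_1\eps$, which is precisely where $B_1$ enters; your direct identity $A(x+h-x_0)=(A-df(x))(x-x_0)+R$ bypasses this entirely, confirming your observation that $B_1$ is inessential for the theoretical statement (and that any constraint $\eta\leq c\,\eps/(rB_2B_3)$ would do). The paper's split has the minor advantage of isolating the ``exact Newton'' error from the ``finite-difference'' error, which is conceptually cleaner for the later finite-precision analysis; your version is shorter and gives a tighter constant.
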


\begin{proof}
  First, note that
  \begin{displaymath}
    \nind{df(x) - df(x_0)} \leq B_2 \norm{x-x_0} \leq B_2\eps \leq \frac{1}{2\nind{df(x_0)^{-1}}},
  \end{displaymath}
  so~$df(x)$ is also invertible, with~$\nind{df(x)^{-1}}\leq 2B_3$.
  % ; moreover, we have
  % \begin{displaymath}
  %   \nind{df(x)^{-1}}\leq 2 B_3,
  %   \quad\text{and}\quad
  %   \nind{df(x)^{-1} - df(x_0)^{-1}} \leq 2 B_2 B_3^2\eps.
  % \end{displaymath}
  We can now study the ``usual'' Newton scheme. Let us write
  \begin{displaymath}
    f(x_0) = f(x) + df(x)(x_0-x) + v,
  \end{displaymath}
  for some vector~$v$ such that~$\norm{v}\leq \frac12 B_2\eps^2$.
  Let~$h_0 = df(x)^{-1}(f(x_0)-f(x))$. Then
  \begin{equation}
    \label{eq:x+h0-x0}
    \norm{x+h_0 - x_0} = \norm{df(x)^{-1} v} \leq B_2 B_3 \eps^2.
  \end{equation}
  Finally, we show that~$h$ is close
  to~$h_0$. Since~$\D_\eta(x)\subset \D_\rho(x_0)$
  (because~$\eta\leq \eps\leq \rho/2$), we have for
  each~$1\leq j\leq r$:
  \begin{displaymath}
    \abs[\Big]{\frac{f(x+\eta e_i) - f(x)}{\eta}
      - \frac{\partial f}{\partial x_j}(x)} \leq \frac12 B_2\eta.
  \end{displaymath}
  Therefore,
  \begin{displaymath}
    \nind{\FD_\eta f(x) - df(x)} \leq \frac{r}{2} B_2 \eta \leq \frac{1}{4 B_3}
    \leq \frac{1}{2\nind{df(x)^{-1}}},
  \end{displaymath}
  so that
  \begin{displaymath}
    \nind{\FD_\eta f(x)^{-1} - df(x)^{-1}} \leq 2 \nind{df(x)^{-1}}^2 \cdot \nind{\FD_\eta f(x) - df(x)}
    \leq 4r B_2 B_3^2 \eta,
  \end{displaymath}
  and
  \begin{equation}
    \label{eq:h-h0}
    \norm{h - h_0} \leq 4r B_2 B_3^2 \eta \norm{f(x) - f(x_0)}
    \leq 4r B_1B_2B_3^2 \eta \eps \leq B_2 B_3 \eps^2.
  \end{equation}
  We obtain the result from~\eqref{eq:x+h0-x0},~\eqref{eq:h-h0}, and
  the triangle inequality.
\end{proof}

Cauchy's integration
formula~\cite[Thm.~2.2.1]{hormander_IntroductionComplexAnalysis1966}
provides uniform upper bounds on~$\nind{df(x)}$ and~$\nind{d^2f(x)}$
for~$x\in \D_\rho(x_0)$ whenever a uniform upper bound on~$\norm{f}$
on a slightly larger polydisk is known; this makes the necessary data
in \cref{prop:theoretical-newton} easier to collect.

\begin{prop}
  \label{prop:cauchy}
  Let~$r,s\geq 1$, let~$x_0\in \C^r$, let~$\rho>0$, and
  let~$f\from \D_\rho(x_0)\to \C^s$ be an analytic
  function. Let~$M\geq 0$ such that~$\norm{f(x)}\leq M$ for
  all~$x\in D_\rho(x_0)$. Then for every~$n\geq 0$ and
  every~$x\in \D_{\rho/2}(x_0)$, we have
  \begin{displaymath}
    \nind{d^nf(x)} \leq \frac{2^n n!}{\rho^n} \binom{n+r}{r} M.
  \end{displaymath}
\end{prop}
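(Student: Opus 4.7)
The plan is to apply the standard multivariable Cauchy integral formula on a polydisk of radius $\rho/2$ centered at $x$ (noting that $\D_{\rho/2}(x)\subset \D_\rho(x_0)$ since $x\in \D_{\rho/2}(x_0)$), and then convert the resulting bounds on partial derivatives into a bound on the induced norm of $d^nf(x)$ as a symmetric $n$-linear form by a straightforward combinatorial argument.

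First I would observe that since $\norm{f(z)}\leq M$ componentwise on~$\D_\rho(x_0)$, the Cauchy formula applied coordinate by coordinate on $\D_{\rho/2}(x)$ yields, for each multi-index $\alpha=(\alpha_1,\ldots,\alpha_r)$ with $\abs{\alpha}=n$,
\begin{displaymath}
  \norm[\Big]{\frac{\partial^\alpha f}{\partial x^\alpha}(x)}
  \leq \frac{\alpha!}{(\rho/2)^{n}}\,M = \frac{2^n \alpha!}{\rho^n}\,M.
\end{displaymath}
This is the key analytic input; the remainder is bookkeeping.

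Next I would expand $d^nf(x)(v_1,\ldots,v_n)$ in coordinates as
\begin{displaymath}
  d^nf(x)(v_1,\ldots,v_n) = \sum_{i_1,\ldots,i_n=1}^r
  \frac{\partial^n f}{\partial x_{i_1}\cdots\partial x_{i_n}}(x)\,
  (v_1)_{i_1}\cdots (v_n)_{i_n},
\end{displaymath}
and take the supremum over $v_1,\ldots,v_n$ in the closed unit ball of $(\C^r,\norm{\cdot}_\infty)$. Grouping tuples $(i_1,\ldots,i_n)$ by the multi-index $\alpha$ they induce, each $\alpha$ arises from exactly $\binom{n}{\alpha_1,\ldots,\alpha_r}=n!/\alpha!$ tuples, so
\begin{displaymath}
  \nind{d^nf(x)} \leq \sum_{\abs{\alpha}=n} \frac{n!}{\alpha!}\,
  \norm[\Big]{\frac{\partial^\alpha f}{\partial x^\alpha}(x)}
  \leq \frac{2^n n!}{\rho^n}\,M \cdot \#\set{\alpha\in\N^r : \abs{\alpha}=n}.
\end{displaymath}
The $\alpha!$ factors cancel conveniently.

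Finally I would finish by the elementary count $\#\{\alpha\in\N^r : \abs{\alpha}=n\} = \binom{n+r-1}{r-1} \leq \binom{n+r}{r}$. There is no real obstacle here; the only mildly subtle point is tracking where the factor $\binom{n+r}{r}$ in the statement comes from (the author has chosen a slightly loose but cleaner binomial coefficient rather than the sharper $\binom{n+r-1}{r-1}$), and being consistent about using the $L^\infty$ operator norm on both $\C^r$ and $\C^s$ so that Cauchy's componentwise bound applies directly.
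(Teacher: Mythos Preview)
Your proof is correct and follows essentially the same approach as the paper: Cauchy's integral formula on a polydisk of radius $\rho/2$ around $x$ to bound the Taylor coefficients (equivalently, partial derivatives), followed by a combinatorial count of multi-indices of weight $n$. The only organizational difference is that the paper bounds the diagonal values $d^nf(x)(v,\ldots,v)$ and then appeals to symmetry, whereas you expand $d^nf(x)(v_1,\ldots,v_n)$ in coordinates and bound the operator norm directly; your version is in fact the explicit content of the paper's phrase ``since $d^nf(x_0)$ is a symmetric operator, the result follows easily,'' and your observation that the sharp count is $\binom{n+r-1}{r-1}\leq\binom{n+r}{r}$ is accurate.
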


\begin{proof}
  It is enough to prove that
  \begin{displaymath}
    \nind{d^n f(x_0)} \leq \frac{n!}{\rho^n} \binom{n+r}{r} M
  \end{displaymath}
  for all~$n$; afterwards, we simply note
  that~$D_{\rho/2}(x)\subset \D_\rho(x_0)$ for
  each~$x\in \D_{\rho/2}(x_0)$. Write~$x_0 = (z_1,\ldots, z_r)$. We
  compute the Taylor expansion of~$f$ at~$x_0$ using Cauchy's formula.
  For each~$\zeta = (\zeta_1,\ldots,\zeta_r)\in \D_{\rho/2}(x_0)$, we
  have
  \begin{displaymath}
    f(\zeta) = \sum_{n = (n_1,\ldots, n_r)\in \N^r} a_n(f) \prod_{j=1}^r (\zeta_j - z_j)^{n_j},
  \end{displaymath}
  where the Taylor coefficients~$a_n(f)\in \C^s$ are computed as
  follows:
  \begin{displaymath}
    a_n(f) = \frac{1}{(2\pi i)^r} \int_{\partial \D_\rho(z_1)}\cdots \int_{\partial \D_\rho(z_r)}
    \frac{f(x_1,\ldots,x_r)}{\prod_{j=1}^r (x_j - z_j)^{n_j+1}}
    \ dx_1\cdots dx_r.
  \end{displaymath}
  In particular,
  \begin{displaymath}
    \norm{a_n(f)} \leq \frac{M}{\rho^{\sum_j n_j}}.
  \end{displaymath}
  Now, for each~$v\in \C^r$, the value of~$d^nf(x_0)(v,\ldots,v)\in \C^s$ is
  given by all terms of total degree~$n$ in the Taylor expansion, up
  to a factor of~$n!$:
  \begin{displaymath}
    d^nf(x_0)(v,\ldots,v) = n! \sum_{m\in \N^r,\ \sum m_j = n} a_m(f) \prod_{j=1}^r v_j^{m_j}.
  \end{displaymath}
  There are exactly~$\binom{n+r}{r}$ terms in the sum.
  % Therefore,
  % \begin{displaymath}
  %   \norm{d^nf(x_0)(v,\ldots,v)} \leq \frac{n!}{\rho^n}\binom{n+r}{r} M \norm{v}^n.
  % \end{displaymath}
  Since~$d^n f(x_0)$ is a symmetric operator, the result follows
  easily.
\end{proof}

In order to run certified Newton iterations on a computer, showing a
theoretical convergence result is not enough: we also have to consider
precision losses, which will for instance prevent us from
choosing~$\eta$ too close to zero. Thankfully, Newton iterations are
self-correcting, and precision losses can be controlled by taking an
additional, explicit safety margin.

We adopt the following computational model for complex numbers. Dyadic
elements of~$\C^r$ (i.e.~elements of~$2^{-N}\Z[i]^r$ for
some~$N\in \Z$) are represented exactly; and for a
general~$z\in \C^r$, we call an \emph{approximation of~$z$ to
  precision~$N$} a dyadic~$z'$ such that~$\norm{z-z'}\leq
2^{-N}$. Elementary operations on approximations of complex numbers
can be carried out using ball
arithmetic~\cite{tucker_ValidatedNumericsShort2011}. Recall that a
function~$C\from \Z_{\geq 1}\to \R_{\geq 0}$ is called
\emph{superlinear} if~$C(m+n)\geq C(m)+C(n)$ for
all~$m,n\in \Z_{\geq 1}$.

\begin{thm}
  \label{thm:practical-newton}
  Let~$\U\subset\C^r$ be an open set, let~$f\from \U\to\C^r$ be an
  analytic function, and let~$x_0\in\U$. Let~$\rho\leq 1,M \geq 1,$
  and~$B_3\geq 1$ be real numbers such that~$D_\rho(x_0)\subset \U$,
  $\nind{f(x)}\leq M$ for each~$x\in \D_\rho(x_0)$,
  and~$\nind{df(x_0)^{-1}}\leq B_3$. Let~$C\from \Z_{\geq 1}\to \R$ be
  a superlinear function such that the following holds:
  \begin{itemize}
  \item there exists an algorithm~$\A$ which, given an
    exact~$x\in \D_\rho(x_0)$ and~$N\geq 0$, computes an approximation
    of~$f(x)$ to precision~$N$ in~$C(N)$ binary operations;
  \item two~$N$-bit integers can be multiplied in~$C(N)$ binary
    operations;
  \item we have~$C(2N)\leq KC(N)$ for some~$K\geq 1$ and for all~$N$
    sufficiently large.
  \end{itemize}
    Then, given~$N\geq 0$,
  an approximation of~$f(x_0)$ to precision~$N$, and an approximation
  of~$x_0$ to precision
  \begin{displaymath}
    n_0 = 2\ceil[\big]{\log_2(2(r+1)M/\rho)} + 2\ceil[\big]{\log_2(B_3)} + 4,
  \end{displaymath}
  \cref{algo:newton} below computes an approximation of~$x_0$ to
  precision~$N - \ceil[\big]{\log_2(B_3)} - 1$
  in~$\bigO\paren[\big]{C(N)}$ binary operations; the hidden constant
  in this complexity bound depends only on~$r,\rho,M,B_3,$ and~$K$.
\end{thm}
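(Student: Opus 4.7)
The plan is to combine \cref{prop:theoretical-newton} with the Cauchy estimates of \cref{prop:cauchy}, and then to iterate a finite-difference Newton step with geometrically increasing precision while carefully tracking rounding losses.

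First I would apply \cref{prop:cauchy} with $n=1$ and $n=2$ to the hypothesis $\norm{f}\le M$ on $\D_\rho(x_0)$, obtaining, on $\D_{\rho/2}(x_0)$, the explicit bounds
\[
B_1 = \frac{2(r+1)M}{\rho}, \qquad B_2 = \frac{4(r+1)(r+2)M}{\rho^2}.
\]
Combined with the hypothesis $\nind{df(x_0)^{-1}}\le B_3$, this puts us in the situation of \cref{prop:theoretical-newton} with $\rho$ replaced by $\rho/2$. I would then verify, using $M, B_3\ge 1$ and $\rho\le 1$, that any dyadic approximation $x$ of $x_0$ to precision $n_0$ satisfies the hypothesis $\norm{x-x_0}\le\eps\le\min\{\rho/4,\,1/(2B_2B_3)\}$ of that proposition: the quadratic dependence of $n_0$ on $\log_2(M/\rho)$ and $\log_2 B_3$ matches the analogous growth of $B_2B_3$, with a margin of a few bits to spare.

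Next I would describe \cref{algo:newton} as constructing a sequence $x^{(0)}, x^{(1)}, \ldots, x^{(k)}$ of approximations of $x_0$ at precisions $n_0 < n_1 < \cdots < n_k$, where each $n_{i+1}$ is at most $2 n_i$ minus a fixed safety margin and $n_k \ge N + \lceil\log_2 B_3\rceil + 1$. To compute $x^{(i+1)}$ from $x^{(i)}$, the algorithm picks a dyadic $\eta_i$ approximating $2^{-n_i}/(4rB_1B_3)$, evaluates $f$ at $x^{(i)}$ and at each $x^{(i)} + \eta_i e_j$ via algorithm $\A$ to working precision $N_i = 2 n_i + c$ for an explicit constant $c$, subtracts these from the given approximation of $f(x_0)$, assembles $\FD_{\eta_i}f(x^{(i)})$, and solves the $r \times r$ linear system for the update $h$ as in \cref{prop:theoretical-newton}. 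Correctness follows by induction on $i$: the exact Newton step gives $\norm{x^{(i)} + h - x_0} \le 2B_2B_3 \eps^2 \le \eps/2$, and the constant $c$ is chosen to dominate the three sources of rounding loss, namely the $O(1)$ bits lost to ball-arithmetic operations, the $\lceil\log_2(1/\eta_i)\rceil = n_i + O(1)$ bits lost in the division by $\eta_i$, and the $\lceil\log_2 B_3\rceil$ bits lost when inverting $\FD_{\eta_i}f(x^{(i)})$, whose inverse has norm at most $2B_3$ by the opening computation of the proof of \cref{prop:theoretical-newton}. Truncating the final output to precision $N - \lceil\log_2 B_3\rceil - 1$ is then free.

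The hard part will be this precision bookkeeping and its use in the complexity count. Iteration $i$ costs $O(C(N_i))$: the $r+1$ calls to $\A$ account for $O(C(N_i))$ operations, and the $O(1)$ arithmetic operations on $r \times r$ matrices of $N_i$-bit complex numbers are also in $O(C(N_i))$ since $N_i$-bit integer multiplication is in $C(N_i)$ by hypothesis and $r$ is fixed. The $N_i$ grow geometrically with $N_k = N + O(1)$, so the superlinearity of $C$ yields $\sum_{i<k} C(N_i) \le C\paren{\sum_{i<k} N_i} \le C(2N_k)$, and the doubling hypothesis $C(2N)\le KC(N)$ finally gives the stated bound $O(C(N))$, with an implicit constant depending only on $r,\rho,M,B_3$, and $K$.
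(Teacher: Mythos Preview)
Your approach matches the paper's: derive $B_1,B_2$ from \cref{prop:cauchy}, feed them into \cref{prop:theoretical-newton}, iterate a finite-difference Newton step at geometrically increasing working precision, and sum costs using superlinearity together with the doubling bound $C(2N)\le KC(N)$. The bookkeeping you sketch (the loss of roughly $n_i$ bits when dividing by $\eta_i$, and of $\log_2 B_3$ bits when inverting the finite-difference matrix) is exactly what the paper tracks.

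One point needs sharpening. In your induction you write $\norm{x^{(i)}+h-x_0}\le 2B_2B_3\eps^2$, but this cannot be proved as stated: the algorithm uses the given dyadic $z_0$, not the exact value $f(x_0)$, so what \cref{prop:theoretical-newton} actually yields is $\norm{x^{(i)}+h-f^{-1}(z_0)}\le 2B_2B_3\eps^2$. The paper makes this explicit: the invariant maintained throughout the loop is that $x$ approximates $f^{-1}(z_0)$ to precision $n$, and only at the very end is this converted to an approximation of $x_0$ via $\norm{f^{-1}(z_0)-x_0}\le 2B_3\cdot 2^{-N}$, which is precisely where the loss of $\lceil\log_2 B_3\rceil+1$ bits in the output precision comes from. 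Your closing remark about the truncation being ``free'' suggests you see this, but the induction hypothesis should be rewritten accordingly. A second, smaller imprecision: the paper only guarantees $n_{i+1}>\tfrac32 n_i$, not doubling, so your bound $\sum_{i<k}N_i\le 2N_k$ should be relaxed to $\sum_{i<k}N_i\le cN_k$ for a suitable constant $c$, after which the doubling hypothesis on $C$ still gives $O(C(N))$.
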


We now describe the algorithm. Let
\begin{displaymath}
  B_1 = \frac{2(r+1)M}{\rho}
  \quad\text{and}\quad
  B_2 = \frac{2(r+1)(r+2)M}{\rho^2}.
\end{displaymath}
By \cref{prop:cauchy}, the real numbers $\rho/2,B_1,B_2,B_3$ meet the
conditions of \cref{prop:theoretical-newton}. Up to decreasing~$\rho$
and increasing~$B_1,B_2,B_3$, we may assume that they are all powers
of~$2$. Denote the given dyadic approximation of~$f(x_0)$ by~$z_0$.

\begin{algo}[Certified Newton iterations for analytic functions]~
  \label{algo:newton}
  \begin{enumerate}
  \item Let~$n = n_0$, and let~$x$ be the given dyadic approximation
    of~$x_0$ to precision~$n$.
  \item While~$n < N$, do:
    \label{step:loop}
    \begin{enumerate}
    \item
      Let~$m = n + \log_2(B_1) + \log_2(B_3) + \ceil{\log_2(r)} + 2$,
      and~$\eta = 2^{-m}$;
    \item Using algorithm~$\A$, compute approximations of~$f(x)$
      and~$f(x+\eta e_j)$ for all~$1\leq j\leq r$ to precision
      $p = 2n + 2\ceil{\log_2(r)} + 2\log_2(B_1) + 2\log_2(B_3) + 9$;
    \item Compute an approximation of the~$r\times r$ matrix~$M_1$
      whose~$j$th column contains the finite
      difference~$\frac 1\eta \paren[\big]{f(x+\eta e_j)-f(x)}$, for
      all~$j$, to precision $p - \log_2(1/\eta)-1$ (entrywise);
    \item
      \label{step:inverse}
      Compute an approximation of the~$r\times r$ matrix~$M_2 = M_1^{-1}$ to
      precision $p' = p - \log_2(1/\eta) - 2\log_2(B_3) - 7$;
    \item
      \label{step:h}
      Compute an approximation of the
      vector~$h = M_2\paren[\big]{z_0 - f(x)}$ to precision
      $p' + n - 1 - \log_2(B_1) - \ceil{\log_2(r)}$;
    \item Let~$n' = 2n - \log(B_2) - \log(B_3) - 2$; replace~$x$ by a
      dyadic approximation of~$x+h$ to precision~$n'+1$, and
      replace~$n$ by~$n'$.
    \end{enumerate}
  \item Return~$x$.
  \end{enumerate}
\end{algo}

% Required inequalities:
% r * 2^(-p)/eta \leq 1/(8B_3)

\begin{proof}[Proof of \cref{thm:practical-newton}] We will show that
  the different quantities appearing in \cref{algo:newton} can be
  computed to the claimed precisions, and that~$x$ remains an
  approximation of~$f^{-1}(z_0)$ to
  precision~$n$. Since~$\nind{df(x)^{-1}}\leq 2B_3$ for
  all~$x\in \D_\rho(x_0)$, the result will be an approximation
  of~$x_0$ to precision~$N - \log_2(B_3) - 1$, as claimed.

  At the beginning of each loop,~$x$ is dyadic, and so are
  the~$x+\eta e_j$ for each $1\leq j\leq r$. Therefore, each entry
  of~$M_1$ can be computed to precision~$p - \log_2(1/\eta) - 1$. Note
  that~$\nind{\FD_\eta f(x)^{-1}}\leq 4B_3$ as a linear
  operator. Let~$M_1'$ be a dyadic approximation of~$M_1$ to
  precision~$p - \log_2(1/\eta)$; then we have
  \begin{displaymath}
    \nind{M_1 - M_1'} \leq \frac{1}{2\nind{M_1^{-1}}},
  \end{displaymath}
  so
  that~$\nind{M_1^{-1}- M_1'^{-1}} \leq 2 \nind{M_1^{-1}}^2 \nind{M_1
    - M_1'} \leq 32 B_3^2\, 2^{-p}/\eta$. This shows that~$M_1^{-1}$
  can be computed to the required precision~$p'$ in
  step~\eqref{step:inverse}. In step~\eqref{step:h}, we perform the
  matrix-vector product using the schoolbook formula. The entries
  of~$M_2$ have modulus at most~$4 B_3$, and are known up to
  precision~$p'$; the entries of~$z_0-f(x)$ have modulus at
  most~$2^{-n}B_1$, and are known up to precision~$p-1$. The total
  error on the product can be bounded above by
  \begin{displaymath}
    r(4B_3 2^{-p+1} + 2^{-n} B_1 2^{-p'} + 2^{-p'-p-1}) \leq 2^{-n+1}r B_12^{-p'}.
    % 2^{-n-p'} is way larger than 2^{-p}, since~eta is about n, and
    % remaining term is negligible
  \end{displaymath}
  The precision~$p$ was chosen in such a way that we obtain, at the
  end of the loop, an approximation of~$x+h$ to
  precision~$2n - \log(B_3) - 1 \geq n'+1$. By
  \cref{prop:theoretical-newton}, the result is also an approximation
  of~$f^{-1}(z_0)$ to precision~$n'$.

  The initial value of~$n_0$ ensures that~$n' > 3n/2$, so that number
  of steps in the loop is~$O(\log N)$. Each loop involves a finite
  number of elementary operations with complex numbers of
  modulus~$O(1)$ at precision~$2n + O(1)$, where the hidden constants
  depend only on~$r,\rho, M$, and~$B_3$; the cost of these
  computations is~$O\paren[\big]{C(n)}$ binary operations. Since~$C$
  is superlinear, the cost of the last loop dominates the cost of the
  whole algorithm, a well-known feature of Newton's method.
\end{proof}

\section{Borchardt means as analytic functions}
\label{sec:borchardt}

The existing Newton schemes for the computation of theta
functions~\cite{dupont_FastEvaluationModular2011,
  dupont_MoyenneArithmeticogeometriqueSuites2006,
  labrande_ComputingJacobiTheta2018,
  labrande_ComputingThetaFunctions2016} are based on \emph{Borchardt
  means}, a higher-dimensional analogue of the classical
arithmetic-geometric mean
(AGM)~\cite{cox_ArithmeticgeometricMeanGauss1984}. Additional
references for the study of Borchardt means, especially in genus~$2$,
are~\cite{bost_MoyenneArithmeticogeometriquePeriodes1988,
  jarvis_HigherGenusArithmeticgeometric2008}.  Our goal in this
section is to study Borchardt means as analytic functions in detail,
obtaining explicit bounds on their magnitudes and radii of
convergence.

\subsection{Borchardt sequences}
\label{subsec:borchardt-def}

Fix~$g\geq 1$, and let~$\I_g = (\Z/2\Z)^g$. A \emph{Borchardt
  sequence} of genus~$g$ is by definition a sequence of complex
numbers
\begin{displaymath}
  s = \paren[\big]{s_b^{(n)}}_{b\in \I_g, n\geq 0}
\end{displaymath}
that satisfy the following recurrence relation: for every~$n\geq 0$,
there exists a choice of square roots~$\paren[\big]{t_b^{(n)}}_{b\in \I_g}$
of~$\paren[\big]{s_b^{(n)}}_{b\in \I_g}$ such that for all~$b\in \I_g$, we have
\begin{equation}
  \label{eq:borchardt-step}
  s_b^{(n+1)} = \frac{1}{2^g} \sum_{b_1 + b_2 = b} t_{b_1}^{(n)} t_{b_2}^{(n)}.
\end{equation}
We say that~$\paren[\big]{s_b^{(n+1)}}_{\smash{b\in \I_g}}$ is the
result of a Borchardt step given by the choice of square
roots~$\paren[\big]{t_b^{(n)}}_{b\in\I_g}$ at the~$n$\th term.  This
recurrence relation emulates the duplication formula satisfied by
theta constants~\cite[p.\,214]{mumford_TataLecturesTheta1983}, after
identifying~$\smash{\set{0,1}^g}$ with~$\I_g$ in the natural way: for
every~$\tau\in \Half_g$, the sequence of squared theta constants
\begin{equation}
  \label{eq:theta-as-borchardt}
  \paren[\big]{\theta^2_{0,b}(0,2^n\tau)}_{b\in \I_g, n\geq 0}
\end{equation}
is a Borchardt sequence.

The convergence behavior of Borchardt sequences is similar to that of
the classical
AGM~\cite[§7.2]{dupont_MoyenneArithmeticogeometriqueSuites2006}. Let
us define a set of complex numbers to be \emph{in good position} if it
is included in an open quarter plane seen from the origin, i.e.~a set
of the form
\begin{displaymath}
  \set[\big]{r \exp(i\theta) \st r>0, \alpha < \theta < \alpha + \tfrac{\pi}{2}}
\end{displaymath}
for some angle~$\alpha\in \R$. We say that the~$n$\th step of a
Borchardt sequence is given by \emph{good sign choices} (or for short,
is \emph{good}) if the square
roots~$\paren[\big]{t_b^{(n)}}_{\smash{b\in\I_g}}$ are in good position;
otherwise, we say that this step is \emph{bad}. Then a Borchardt
sequence~$s$ will converge to~$(0,\ldots,0)$ if and only if~$s$
contains infinitely many bad steps. On the other hand, a Borchardt
sequence~$s$ in which all steps are good after a while converges to a
limit of the form~$(\mu,\ldots,\mu)$ for some~$\mu\neq 0$, and the
speed of convergence is quadratic; we call~$\mu = \mu(s)$ the
\emph{Borchardt mean} of the sequence.  Borchardt sequences given by
theta functions as in~\eqref{eq:theta-as-borchardt} are of this second
type: see for
instance~\cite[Prop.~6.1]{dupont_MoyenneArithmeticogeometriqueSuites2006}.
% or Streng Prop. 7.6

A related kind of recurrent sequence is used in the context of
computing theta functions. Let us call an \emph{extended Borchardt
  sequence} of genus~$g$ a pair~$(u,s)$ of sequence of complex numbers
\begin{displaymath}
  u = \paren[\big]{u_b^{(n)}}_{b\in \I_g, n\geq 0},\qquad
  s = \paren[\big]{s_b^{(n)}}_{b\in \I_g, n\geq 0}
\end{displaymath}
satisfying the following recurrence relation: for every~$n\geq 0$,
there exists a choice of square
roots~$\paren[\big]{v_b^{(n)}}_{b\in \I_g}$
of~$\paren[\big]{u_b^{(n)}}_{b\in \I_g}$
and~$\paren[\big]{t_b^{(n)}}_{b\in \I_g}$
of~$\paren[\big]{s_b^{(n)}}_{b\in \I_g}$ such that for all~$b$,
\begin{equation}
  \label{eq:ext-borchardt-step}
  u_b^{(n+1)} = \frac{1}{2^g} \sum_{b_1 + b_2 = b} v_{b_1}^{(n)} t_{b_2}^{(n)}
  \quad \text{and}\quad
  s_b^{(n+1)} = \frac{1}{2^g} \sum_{b_1 + b_2 = b} t_{b_1}^{(n)} t_{b_2}^{(n)}.
\end{equation}
In particular,~$s$ is a regular Borchardt sequence. We say that
the~$n$\th step in~$(u,s)$ is \emph{good} if both of the
sets~$\paren[\big]{v_b^{(n)}}_{b\in \I_g}$
and~$\paren[\big]{t_b^{(n)}}_{b\in \I_g}$ are independently in good
position, and \emph{bad} otherwise. For each~$\tau\in \Half_g$
and~$z\in \C^g$, the duplication formula for theta functions implies
that the sequence
\begin{displaymath}
  \paren[\big]{\theta^2_{0,b}(z, 2^n\tau), \theta^2_{0,b}(0, 2^n\tau)}_{b\in \I_g, n\geq 0} 
\end{displaymath}
is an extended Borchardt sequence; it contains only finitely many bad
steps as well.

It is not true in general that an extended Borchardt sequence
containing finitely many bad steps converges quadratically. Instead,
following~\cite{labrande_ComputingThetaFunctions2016}, we define the
\emph{extended Borchardt mean} of such a sequence~$(u,s)$ to be
\begin{equation}
  \label{eq:def-ext-borchardt}
  \lambda(u, s) = \mu(s) \cdot
  \lim_{n\to +\infty} \left(\frac{u_0^{(n)}}{s_0^{(n)}}\right)^{2^n}
  = \mu(s) \cdot
  \lim_{n\to +\infty} \left(\frac{u_0^{(n)}}{\mu(s)}\right)^{2^n}.
\end{equation}
These associated sequences do converge
quadratically~\cite[Prop.~3.7]{labrande_ComputingThetaFunctions2016}.

% The algorithms to compute theta constants and theta functions that we
% want to study are based on Newton schemes of the following
% kind. Assume that we are given a Borchardt sequence~$s$ with finitely
% many possibly bad steps, say at steps~$n\leq n_0$. The Borchardt
% mean~$\mu(s)$ is very close to some target value~$\mu_0$. Then we will
% attempt to modify the first terms~$(s_b^{(0)})_{b\in \I_g}$ of~$s$
% slightly to build another Borchardt sequence using ``essentially'' the
% same sign choices we made in~$s$, and whose Borchardt mean is closer
% to our target value~$\mu_0$.

Assume that we are given a Borchardt sequence~$s$ containing finitely
many bad steps. Then we may try to construct a function~$\mu_s$,
defined at any point~$\smash{x = (x_b)_{b\in \I_g}}$ in some
neighborhood of~$\paren[\big]{s_b^{(0)}}_{\smash{b\in \I_g}}$, by the following procedure:
``construct a modified Borchardt sequence whose first term is~$(x_b)$
that follows same choices of square roots as in~$s$, and take its
Borchardt mean''. The Newton schemes we want to study are precisely
built around this kind of functions~$\mu_s$, and their analogues for
extended Borchardt means. In the rest of this section, we show that
these functions indeed exist as analytic functions defined on explicit
polydisks, provided that all terms in the relevant Borchardt sequences
are bounded away from zero.

\subsection{The case of good sign choices}
\label{subsec:borchardt-good}

Let~$s$ be a Borchardt sequence containing good steps only. Then we
can find real numbers~$0< m_0 < M_0$ and~$\alpha$ such that such that
the first term~$\paren[\big]{s_b^{(0)}}_{\smash{b\in \I_g}}$ of~$s$ lies in the open
set~$\U_g(m_0, M_0)$ of~$\C^{2^g}$ defined as follows:
\begin{displaymath}
  \U_g(m_0, M_0) = \bigcup_{\smash{\alpha\in [0,2\pi]}} \U_g(m_0,M_0,\alpha),
\end{displaymath}
where
\begin{displaymath}
  \U_g(m_0, M_0, \alpha) = \set[\big]{(x_b)_{b\in \I_g} \st
    \forall b\in \I_g, m_0 < \re(e^{-i\alpha}x_b) < M_0}.
\end{displaymath}

\begin{prop}
  \label{prop:good-borchardt-analytic}
  Let~$0< m_0 < M_0$ be real numbers. Then there exists a unique
  analytic function~$\mu \from \U_g(m_0, M_0)\to \C$ with the
  following property: for every point
  $x=(x_b)_{b\in \I_g} \in \U_g(m_0, M_0)$, the value of~$\mu$ at~$x$
  is the Borchardt mean of the unique Borchardt sequence with first
  term~$x$ given by good steps only.  Moreover, the inequalities
  $m_0 \leq \abs{\mu(x)} \leq M_0$ hold for all~$x\in \U_g(m_0,M_0)$.
\end{prop}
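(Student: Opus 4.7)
The plan is to construct $\mu$ locally on each sector $\U_g(m_0,M_0,\alpha)$, paste the pieces together, and then read off the magnitude bound from an invariant of the Borchardt iteration. On a fixed sector, the condition $\re(e^{-i\alpha} x_b) > m_0 > 0$ puts each $e^{-i\alpha} x_b$ in the open right half plane, so it admits a canonical principal square root; I set $t_b(x) = e^{i\alpha/2}\sqrt{e^{-i\alpha} x_b}$. This is analytic in $x$, and a direct estimate shows $\arg(e^{-i\alpha/2} t_b(x))\in(-\pi/4,\pi/4)$, so the family $(t_b(x))_b$ lies in the open quarter plane with axis $e^{i\alpha/2}$ and opening $\pi/2$, hence is in good position. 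The one-step Borchardt map $F_\alpha(x)_b = 2^{-g}\sum_{b_1+b_2=b} t_{b_1}(x)\, t_{b_2}(x)$ is therefore analytic on $\U_g(m_0,M_0,\alpha)$ and implements the unique good sign choice.

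Next I iterate. The image $F_\alpha(x)$ again lies in a set of the same type, possibly with different parameters $m_1, M_1, \alpha_1$, so the same construction gives the next step and, inductively, analytic maps $F^n$ for every $n \geq 0$. Quadratic convergence of good-sign Borchardt sequences, as recalled in the preceding subsection following \cite{dupont_MoyenneArithmeticogeometriqueSuites2006, labrande_ComputingThetaFunctions2016, bost_MoyenneArithmeticogeometriquePeriodes1988}, yields $F^n(x)\to(\mu(x),\ldots,\mu(x))$ uniformly on compact subsets. Weierstrass's theorem then makes $\mu$ analytic on each sector. Uniqueness follows from the identity principle together with the uniqueness of the good sign choice.

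To pass to the global domain $\U_g(m_0,M_0)$, I would show that on any overlap $\U_g(m_0,M_0,\alpha)\cap\U_g(m_0,M_0,\beta)$ the two local definitions agree: the good square root at each step is the analytic continuation of the one defined at $\alpha$ as $\alpha$ moves to $\beta$, and both selection rules pick out the same branch on the overlap. Hence the local $\mu$'s patch into a single analytic $\mu\from\U_g(m_0,M_0)\to\C$. For the magnitude bound, I would track inductively the quantities $\re(e^{-i\alpha_n} s_b^{(n)})$ alongside the contraction of the angular spread of the square roots, and argue that in the limit the iterates become real positive in a suitable rotated frame, so that $|\mu(x)|$ can be directly compared to the initial bounds $m_0$ and $M_0$.

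The main obstacle is the very first step of the iteration: the family $(t_b(x))$ only barely fits inside its quarter plane, because the arguments of the individual $e^{-i\alpha/2} t_b(x)$ can approach the boundary rays at $\pm\pi/4$. A finer analysis is needed to show that after a bounded number of Borchardt steps the angular spread contracts strictly, so that the standard quadratic-convergence and magnitude estimates then take effect. This same analysis is what should ultimately yield the quantitative control of the real-part invariant needed for the upper bound $|\mu(x)|\leq M_0$.
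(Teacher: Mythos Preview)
Your overall strategy coincides with the paper's, and the ingredient you are missing is exactly what the paper cites to dissolve your ``obstacle'': the stability of each sector under the good Borchardt step. The paper asserts, invoking \cite[Lem.~7.3]{dupont_MoyenneArithmeticogeometriqueSuites2006}, that $F_\alpha$ maps $\U_g(m_0,M_0,\alpha)$ into itself, so there is no parameter drift, no patching across sectors is needed, and a single analytic square root on a fixed domain serves for the whole iteration. What is elementary to verify directly is the preservation of the lower constraint $\re(e^{-i\alpha}x_b)>m_0$: with $u_b=e^{-i\alpha/2}t_b(x)$ one has $(\re u_b)^2-(\im u_b)^2>m_0$, and a short computation then gives $\re(u_{b_1}u_{b_2})>m_0$ for every pair, so each coordinate of $e^{-i\alpha}F_\alpha(x)$ again has real part exceeding $m_0$. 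This already lets you iterate on a fixed rotated half-plane and yields $|\mu(x)|\ge\re(e^{-i\alpha}\mu(x))\ge m_0$ in the limit; uniform convergence on compacta is then \cite[Prop.~7.2]{dupont_MoyenneArithmeticogeometriqueSuites2006}.

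Your hesitation about the upper bound $|\mu(x)|\le M_0$ is, however, well founded: as stated on $\U_g(m_0,M_0)$ it is false, and so is the full invariance of $\U_g(m_0,M_0,\alpha)$. For $g=1$ and $\alpha=0$, take $x_0=c+iL$, $x_1=c-iL$ with $m_0<c<M_0$; one good step yields the real pair $(c,\sqrt{c^2+L^2})$, and $\mu(x)$ is then their AGM, which tends to infinity with $L$. What does hold unconditionally is $|\mu(x)|\le\max_b|x_b|$, since $\max_b|s_b^{(n)}|$ is nonincreasing along any Borchardt sequence, and this is the form actually used downstream (see the proof of \cref{prop:bad-borchardt-analytic}). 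So either the extra hypothesis $|x_b|\le M_0$ or the weaker conclusion $|\mu(x)|\le\max_b|x_b|$ is needed here; the ``finer analysis'' you were looking for does not exist for the bound as written.
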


\begin{proof}
  For each~$x\in \U_g(m_0, M_0,\alpha)$, there is a unique way of
  making a good Borchardt step starting from~$x$; moreover the result
  of this Borchardt step still lands in~$\U_g(m_0, M_0,\alpha)$
  by~\cite[Lem.~7.3]{dupont_MoyenneArithmeticogeometriqueSuites2006}. Therefore
  we may define~$\mu(x)$ as the limit of the resulting Borchardt
  sequence; we have~$m_0\leq \abs{\mu(x)}\leq M_0$. Since there exists
  an analytic square root function on~$\U_1(m_0,M_0,\alpha)$, the
  function~$\mu$ on~$\U_g(m_0,M_0,\alpha)$ is the pointwise limit of a
  sequence of analytic functions. The convergence is uniform on
  compact sets
  by~\cite[Prop.~7.2]{dupont_MoyenneArithmeticogeometriqueSuites2006},
  so~$\mu$ is analytic on the whole of~$\U_g(m_0,M_0)$.
\end{proof}

We now consider the case of extended Borchardt means given by good
choices of square roots only. This case is easier to analyse if we
assume that the truly Borchardt part of the sequence already starts in
the quadratic convergence
area. By~\cite[Prop.~7.1]{dupont_MoyenneArithmeticogeometriqueSuites2006},
if we have
\begin{equation}
  \label{eq:quadratic-area}
  \abs[\big]{s_b^{(n)} - s_0^{(n)}} < \tfrac{\eps}{4} \abs[\big]{s_0^{(n)}}
\end{equation}
for some~$\eps \leq 1/2$, then we have
\begin{displaymath}
  \abs[\big]{s_b^{(n+k)} - s_0^{(n+k)}} \leq
  \frac{2}{7}\left(\frac{7\eps}{8}\right)^{2^k}\cdot \max_{b\in \I_g}\ \abs[\big]{s_b^{(n)}}
  %\left(1+\frac{\eps}{4}\right) \abs{s_0^{(n)}}
  % < \eps^{2^k} \abs{s_0^{(n)}}
\end{displaymath}
for all~$k\geq 0$ and~$b\in \I_g$. If we assume that the first term
of~$s$ lies in a ball of the form $\D_{\rho}(z_0)$ for
some~$z_0\in \C^\times$ and $0< \rho < \tfrac{1}{17} \abs{z_0}$, then
inequality~\eqref{eq:quadratic-area} will be satisfied
with~$\eps = \frac12$ at~$n=0$.

\begin{prop}
  \label{prop:good-ext-borchardt-analytic}
  Let~$0< m_0<M_0$ be real numbers, fix a nonzero~$z_0\in \C$,
  and let $0 < \rho < \tfrac{1}{17} \abs{z_0}$.  Then there exists a
  unique analytic function
  \begin{displaymath}
    \lambda\from \U_g(m_0,M_0)\times \D_\rho(z_0)^{2^g}
    \to \C
  \end{displaymath}
  with the following property: for every~$(x,y)$ in this open
  set,~$\lambda(x,y)$ is equal to the extended Borchardt mean of any
  extended Borchardt sequence with first term~$(x,y)$ given by good
  steps only. Moreover, we have
  \begin{displaymath}
    \exp\left(-28\log^2(4M/m)\right) \leq \abs{\lambda(x,y)}
    \leq \exp\left(20\log^2(4M/m)\right)
  \end{displaymath}
  where~$M = \max\{\abs{z_0}+\rho, M_0, 1\}$
  and~$m = \min\{\abs{z_0}-\rho, m_0, 1\}$.
\end{prop}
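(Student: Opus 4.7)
The plan is to proceed in parallel with the proof of \cref{prop:good-borchardt-analytic}: first define $\lambda$ pointwise from \eqref{eq:def-ext-borchardt}, then prove analyticity by uniform convergence on compacts, and finally extract the magnitude bounds. Set $u^{(0)} = x$ and $s^{(0)} = y$, so that $(u^{(n)}, s^{(n)})$ denotes the unique extended Borchardt sequence through $(x,y)$ with only good sign choices. The hypothesis $\rho < \tfrac{1}{17}\abs{z_0}$ together with \cite[Prop.~7.1]{dupont_MoyenneArithmeticogeometriqueSuites2006} places the $s$-sequence in the quadratic convergence regime from $n=0$, and \cref{prop:good-borchardt-analytic}, applied to an $\U_g$-type neighborhood containing $\D_\rho(z_0)^{2^g}$, shows that $y\mapsto \mu(s)$ is analytic with $\abs{\mu(s)}$ controlled by $\abs{z_0}\pm\rho$. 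Simultaneously, the $u$-part admits a unique good sign choice $v_b^{(n)}$ analytic in $(x,y)$: at $n=0$ because $x\in \U_g(m_0,M_0,\alpha)$ for some $\alpha$, and at later $n$ because the quadratic convergence of $s^{(n)}$ soon forces $u^{(n)}$ into a ball where the principal square root is unambiguous.

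Next, I would show that the ratios $r_n := u_0^{(n)}/s_0^{(n)}$ satisfy $r_{n+1}^2 = r_n(1+\eta_n)$ with $\abs{\eta_n}$ decaying quadratically in $n$, so that the partial products $r_n^{2^n}$ form a Cauchy sequence, defining $\lambda$. The quadratic convergence of these associated sequences is essentially \cite[Prop.~3.7]{labrande_ComputingThetaFunctions2016}, which I would cite; what remains is uniformity of the estimates over compact subsets of the polydisk, ensuring that $\lambda$ is the uniform limit of the analytic maps $(x,y)\mapsto \mu(s)\cdot r_n^{2^n}$ on compacts, and hence itself analytic. The magnitude bounds then follow from the telescoping decomposition $\log\lambda = \log\mu(s) + \log r_0 + \sum_{n\geq 0} 2^n \log(r_{n+1}^2/r_n)$: the first term is controlled by \cref{prop:good-borchardt-analytic}, the second by $\abs{\log r_0}\leq 2\log(M/m)$, and the tail sum splits into a transient phase of $O(\log(M/m))$ steps and a quadratic phase of summable contributions, together producing a total size $O(\log^2(M/m))$.

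The main obstacle is extracting the explicit constants $20$ and $28$. This demands careful bookkeeping of the transient phase: a precise upper bound on the number of steps before~$u^{(n)}$ joins $s^{(n)}$ in the quadratic regime, and a sharp per-step control on $\abs{\log(r_{n+1}^2/r_n)}$ during this phase. The asymmetry between the two constants likely reflects cancellations in the telescoping sum that affect upper and lower bounds differently; the explicit step-by-step estimates of \cite[§7]{dupont_MoyenneArithmeticogeometriqueSuites2006} should provide the required quantitative input, and the slight enlargement $4M/m$ inside the logarithm (rather than $M/m$) builds in the slack needed to absorb constants that appear in these bounds.
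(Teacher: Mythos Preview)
Your overall strategy matches the paper's: reduce to a fixed sector~$\U_g(m_0,M_0,\alpha)$, use the quadratic convergence of the $s$-part guaranteed by $\rho<\tfrac{1}{17}\abs{z_0}$, write~$\lambda$ as a telescoping infinite product, bound the factors by~$1+O(2^{-2^{n-1}})$, and split into a transient of length $O(\log(M/m))$ plus a summable tail to obtain the $\log^2(4M/m)$ bounds. The paper uses the factors $q_n=(u_0^{(n)})^2/(u_0^{(n-1)}\mu)$ rather than your $r_{n+1}^2/r_n$, but this is cosmetic: both encode the same recursion, and the explicit chain of inequalities (via bounds on $\abs{u_0^{(n+1)}-v_0^{(n)}t_0^{(n)}}$ and $\abs{u_b^{(n)}-u_0^{(n)}}$) goes through in either parametrization.

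There is, however, one genuine gap. You assert that there is a \emph{unique} extended Borchardt sequence through~$(x,y)$ with only good steps, and that the good square roots~$v_b^{(n)}$ are uniquely determined. This is false: at every step, replacing all~$v_b^{(n)}$ by~$-v_b^{(n)}$ preserves good position, so the $u$-part at step~$n$ is only determined up to a global~$2^n$-th root of unity. The paper addresses this explicitly (citing \cite[Lem.~3.8]{labrande_ComputingJacobiTheta2018}): any two good-sign sequences differ at step~$n$ by such a root of unity, which is killed by the~$2^n$-th power in~\eqref{eq:def-ext-borchardt}, so~$\lambda$ is well-defined. To then get an \emph{analytic} function of~$(x,y)$, the paper fixes one canonical branch by specifying the argument of each~$u^{(n)}$ (placing it in a definite sector depending on~$\alpha$ and~$\arg z_0$). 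Your sketch needs this two-part fix---well-definedness of~$\lambda$ despite non-uniqueness, then a coherent analytic selection---before the uniform-limit argument can proceed.
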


\begin{proof}
  We follow the proof
  of~\cite[Thm.~3.10]{labrande_ComputingJacobiTheta2018}, and hints on
  how to generalize it to higher genera given
  in~\cite[Prop.~3.7]{labrande_ComputingThetaFunctions2016}. We may
  fix~$\alpha\in\R$ and restrict our attention
  to~$\U_g(m_0,M_0,\alpha)$.

  First of all, by the proof
  of~\cite[Lem.~3.8]{labrande_ComputingJacobiTheta2018},
  each~$(x,y)\in \U_g(m_0,M_0,\alpha) \times \D_\rho(z_0)^{2^g}$ is
  the starting point of at least one extended Borchardt
  sequence~$(u,s)$ with good sign choices at all steps. Any two such
  sequences differ at the~$n$\th term by global
  multiplication~$\paren[\big]{u_b^{(n)}}_{\smash{b\in\I_g}}$ by a~$2^n$-th
  root of unity; therefore, their extended Borchardt means are
  equal. Note that~$M$ (resp.~$m$) is an upper (resp.~lower) bound on
  the modulus of all complex numbers appearing in these extended
  Borchardt sequences.  In the rest of this proof, we
  fix~$\theta_0\in\R$ such that~$\theta_0 = \arg(z_0)$ mod~$2\pi$, and
  consider the unique such sequence~$(u,s)$ whose $n$\th term lies in
  \begin{displaymath}
    \U_g\left(m,M,\frac{\alpha + (2^n-1)\theta_0}{2^n}\right) \times
    \U_g(m,M,0).
  \end{displaymath}
  Each term of~$(u,s)$ is an analytic function of its starting
  point~$(x,y)$.

  By construction, we have for all~$n\geq 0$:
  \begin{displaymath}
    \abs[\big]{s_b^{(n)} - s_0^{(n)}} < 2^{-2^n} \abs{z_0}.
  \end{displaymath}
  Let~$\mu$ be the Borchardt mean of~$s$. For~$n\geq 1$, write
  \begin{displaymath}
    q_n = \frac{(u_0^{(n)})^2}{u_0^{(n-1)} \mu},
  \end{displaymath}
  so that for all~$k\geq 0$, we have
  \begin{displaymath}
    \lambda(x,y) = \left(\frac{u_0^{(k)}}{\mu}\right)^{2^k}\prod_{n\geq k} q_{n+1}^{2^n}.
  \end{displaymath}
  These complex numbers~$q_n$ converge quadratically fast to~$1$. To
  be more explicit, we have for all~$n\geq 1$:
  \begin{align*}
    \abs[\big]{u_0^{(n+1)} - v_0^{(n)} t_0^{(n)}}
    &\leq \frac{\sqrt{M}}{2^g} \sum_{b\in\I_g} \left(\abs[\big]{t_b^{(n)}  - t_0^{(n)}}
      + \abs[\big]{v_b^{(n)} - v_0^{(n)}}\right) \\
    &\leq \frac{\sqrt{M}}{2^g \cdot 2\sqrt{m}}
      \sum_{b\in\I_g} \left(\abs[\big]{s_b^{(n)} - s_0^{(n)}}
      + \abs[\big]{u_b^{(n)} - u_0^{(n)}}\right) \\
    &\leq \frac{\sqrt{M}}{2\sqrt{m}}
      \Bigl(2^{-2^n}\abs{z_0} + \frac{1}{2^g}\sum_{b\in \I_g} \abs[\big]{u_b^{(n)} - u_0^{(n)}} \Bigr).
  \end{align*}
  To bound the remaining sum, we write
  \begin{align*}
    \abs[\big]{u_b^{(n)} - u_0^{(n)}}
    &\leq \frac{\sqrt{M}}{2^g}
      \sum_{b'\in\I_g} \abs[\big]{t_{b+b'}^{(n-1)} - t_b^{(n-1)}} \\
    &\leq \frac{\sqrt{M}}{2^g\cdot 2\sqrt{m}} \sum_{b'\in \I_g}\abs[\big]{s_{b+b'}^{(n-1)} - s_b^{(n-1)}}
    \leq \frac{\sqrt{M}}{\sqrt{m}}\, 2^{-2^{n-1}} \abs{z_0}.
  \end{align*}
  Therefore, we have for all~$n\geq 1$
  \begin{displaymath}
    \abs[\big]{u_0^{(n+1)} - v_0^{(n)} t_0^{(n)}}
    \leq \frac{5}{4} \sqrt{\frac{M}{m}}\ 2^{-2^{n-1}}\abs{z_0} =: B\cdot 2^{-2^{n-1}}.
  \end{displaymath}
  We deduce as in~\cite[Thm.~3.10]{labrande_ComputingJacobiTheta2018}
  that
  \begin{displaymath}
    \abs{q_{n+1}-1} \leq B'\cdot 2^{-2^{n-1}}
  \end{displaymath}
  where
  \begin{displaymath}
    B' = 2\abs{z_0} + \frac{1}{m^2}(2MB + B^2) \leq \frac{5M^3}{m^3}.
  \end{displaymath}
  Let~$k\geq 1$ be minimal such that~$B' \cdot 2^{-2^{k-1}}\leq \frac12$. Then we have
  \begin{displaymath}
    \sum_{n\geq k} 2^n\log\abs{q_{n+1}} \leq \sum_{n\geq k}
    2^n \cdot \frac{1}{2}\cdot 2^{2^{k-1}} \cdot 2^{-2^{n-1}} \leq 2^{k}.
  \end{displaymath}
  This proves that the sequence~\eqref{eq:def-ext-borchardt}
  converges; since our estimates are uniform,~$\lambda$ must be
  analytic. Moreover,
  \begin{displaymath}
    \abs{\lambda(x,y)} = \abs[\Big]{\frac{u_0^{(k)}}{\mu}}^{2^{k}}
    \prod_{n\geq k} \abs{q_{n+1}}^{2^n}\leq \exp\left(2^{k}\paren[\big]{1+ \log(M/m)}\right).
  \end{displaymath}
  We obtain the final upper bound on~$\abs{\lambda(x,y)}$ from the
  inequality~$2^{k}\leq 4(1+\log_2(B'))$, after some further
  simplifications. The lower bound comes from the inequality
  \begin{displaymath}
    \sum_{n\geq k} 2^n\log\abs{q_{n+1}} \geq -2^k \cdot 2\log(2)
  \end{displaymath}
  in a similar way.
\end{proof}

\subsection{The general case}
\label{subsec:borchardt-general}

Let~$s$ be a Borchardt sequence containing finitely many bad steps. We
now construct the ``Borchardt mean following~$s$'' in a neighborhood
of the first term of~$s$ as an analytic function, provided that~$s$
contains no zero value. To make things explicit, we introduce the
following quantities:
\begin{itemize}
\item a real number~$M_0>0$ such that $\abs[\big]{s_b^{(0)}} < M_0$ for
  all~$b\in \I_g$;
\item an integer~$n_0$ such that all steps in~$s$ of index~$n\geq n_0$
  are good;
\item a real number~$m_\infty>0$ such
  that~$\paren[\big]{s_b^{(n_0)}}_{b\in \I_g}\in \U_g(m_\infty,M_0)$ in the
  notation of~§\ref{subsec:borchardt-good};
\item for each~$0\leq n\leq n_0-1$, a real number~$m_n>0$ such
  that~$\abs[\big]{s_b^{(n)}} > m_n$ for all~$b\in \I_g$.
\end{itemize}
For each~$n \leq n_0-1$, we also
let~$\smash{\paren[\big]{t_b^{(n)}}_{b\in \I_g}}$ be a collection of
square roots of~$\smash{\paren[\big]{s_b^{(n)}}_{b\in \I_g}}$ such that
the~$n+1$\first term of~$s$ is given by the recurrence
relation~\eqref{eq:borchardt-step}.

It will be useful to introduce Borchardt steps as analytic maps,
besides the case of good sign choices.
Let~$z = (z_b)_{b\in\I_g} \in \C^{2^g}$; assume that~$0 < m < M$ are
real numbers such that~$m < \abs{z_b}^2 < M$ for all~$b$. Then for
each~$b\in \I_g$, there exists a unique analytic square root
map~$\Sqrt_{z_b}$ on the disk~$\D_{m/2}(z_b^2)$ which maps~$z_b^2$
to~$z_b$. Thus, we have a well-defined analytic map
\begin{displaymath}
  \Bstep_z \from \prod_{b\in \I_g} \D_{m/2}(z_b^2) \to \C^{2^g}
\end{displaymath}
A quick calculation shows that~$\nind{d\Bstep_z}\leq \sqrt{(2M+m)/m}$
uniformly on its open set of definition.

\begin{lem}
  \label{lem:borchardt-following-s}
  Given~$s$ and the quantities listed above, let
  \begin{equation}
    \label{eq:borchardt-rho}
    \rho = \min\left\{\frac{m_0}{2}, \frac{m_1}{2}\sqrt{\frac{m_0}{2M_0 + m_0}},\cdots,
      %\frac{m_{n_0-1}}{2^{g+1}} \prod_{j=0}^{n_0-2}\sqrt{\frac{m_j}{2M_0 +m_j}},
      \frac{m_{\infty}}{2} \prod_{j=0}^{n_0-1}\sqrt{\frac{m_j}{2M_0 +m_j}}
    \right\}.
  \end{equation}
  Let~$s^{(0)} = \paren[\big]{s_b^{(0)}}_{b\in \I_g}$ be the first term of~$s$, and
  let~$x\in \D_\rho(s^{(0)})$. Then there exists a unique Borchardt
  sequence~$s'$ with the following properties:
  \begin{enumerate}
  \item the first term of~$s'$ is~$x$;
  \item for all~$0\leq n\leq n_0-1$ and all~$b\in \I_g$, we have
    $\abs[\big]{{s'}_b^{(n)} - s_b^{(n)}} < \frac{1}{2}m_{n}$; moreover
    the~$n+1$\first term of~$s'$ is the result of a Borchardt step with
    choice of square roots~$\Sqrt_{t_b^{(n)}}({s'}_b^{(n)})$ for
    all~$b\in \I_g$;
  \item for all~$n\geq n_0$, the~$n+1$\first term of~$s'$ is the result
    of a Borchardt step from the previous term with good sign choices.
  \end{enumerate}
\end{lem}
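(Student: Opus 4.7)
The plan is to proceed by induction on the step index $n$, driven by the operator-norm bound $\nind{d\Bstep_z} \leq \sqrt{(2M+m)/m}$ recorded just before the lemma. I would construct $s'$ step by step: for $0 \leq n \leq n_0 - 1$, set ${s'}^{(n+1)} = \Bstep_{t^{(n)}}({s'}^{(n)})$, where this analytic map internally applies the analytic square roots $\Sqrt_{t_b^{(n)}}$ followed by Borchardt averaging; for $n \geq n_0$, extend $s'$ by invoking \cref{prop:good-borchardt-analytic} to produce the unique good continuation from ${s'}^{(n_0)}$.

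The central estimate I would carry through the induction is
\begin{displaymath}
  \norm{{s'}^{(n)} - s^{(n)}} \leq \rho \prod_{j=0}^{n-1} K_j,
  \qquad K_j := \sqrt{\frac{2M_0 + m_j}{m_j}},
\end{displaymath}
whose right-hand side is at most $m_n/2$ for $n \leq n_0-1$ and at most $m_\infty/2$ for $n = n_0$, by the very definition of $\rho$ in \eqref{eq:borchardt-rho}. Two facts are required at each step. First, ${s'}^{(n)}$ lies in $\prod_{b\in\I_g}\D_{m_n/2}(s_b^{(n)})$, the polydisk on which $\Bstep_{t^{(n)}}$ is defined; this follows from the induction hypothesis. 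Second, $\nind{d\Bstep_{t^{(n)}}} \leq K_n$ holds on this polydisk: the lower bound $\abs{s_b^{(n)}} > m_n$ is given by hypothesis, and $\abs{s_b^{(n)}} \leq M_0$ follows from a one-line calculation using \eqref{eq:borchardt-step} showing that $\max_b \abs{s_b^{(n)}}$ is non-increasing in $n$. The mean value inequality on the convex polydisk then delivers $\norm{{s'}^{(n+1)} - s^{(n+1)}} \leq K_n \norm{{s'}^{(n)} - s^{(n)}}$, closing the induction.

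At $n = n_0$ the estimate specializes to $\norm{{s'}^{(n_0)} - s^{(n_0)}} \leq m_\infty/2$, placing ${s'}^{(n_0)}$ inside $\U_g(m_\infty/2, M_0 + m_\infty/2, \alpha)$ for the angle $\alpha$ adapted to $s^{(n_0)}$. \Cref{prop:good-borchardt-analytic} then furnishes a unique good continuation, giving condition~(iii). Uniqueness of $s'$ among sequences satisfying (i)--(iii) is then automatic: condition (ii) pins down each early square root because $\Sqrt_{t_b^{(n)}}$ is the unique analytic square root on its disk (this is exactly what the bound $m_n/2$ in (ii) buys), and (iii) together with \cref{prop:good-borchardt-analytic} fixes the tail.

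The hard part will be the bookkeeping of which polydisk hosts each ${s'}^{(n)}$ so that $\Bstep_{t^{(n)}}$ is actually defined at that point. The telescoping product in \eqref{eq:borchardt-rho} is precisely calibrated so that pulling the target radius $m_n/2$ back through the $n$ successive Lipschitz constants $K_0, \ldots, K_{n-1}$ collapses to a single initial radius $\rho$ that works simultaneously at all steps; once this is written out, the remainder is a mechanical induction.
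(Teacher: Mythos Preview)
Your proposal is correct and follows essentially the same approach as the paper: the paper's proof is a single sentence stating ``We proceed by induction, using the above estimate on derivatives of Borchardt steps for~$n\leq n_0-1$,'' and you have spelled out exactly that induction, including the Lipschitz propagation via the bound $\nind{d\Bstep_z}\leq\sqrt{(2M+m)/m}$ and the appeal to \cref{prop:good-borchardt-analytic} for the tail.
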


\begin{proof}
  We proceed by induction, using the above estimate on derivatives of
  Borchardt steps for~$n\leq n_0-1$.
\end{proof}

\begin{prop}
  \label{prop:bad-borchardt-analytic}
  Given~$s$ and the quantities listed above,
  let~$s^{(0)} = \paren[\big]{s_b^{(0)}}_{b\in \I_g}$ be the first
  term of~$s$, and define~$\rho>0$ as
  in~\eqref{eq:borchardt-rho}. Then there exists a unique analytic
  function~$\mu_s\from \D_\rho(s^{(0)})\to\C$ with the following
  property: for each~$x\in \D_\rho(s^{(0)})$, the value of~$\mu_s$
  at~$x$ is the Borchardt mean of the sequence defined in
  \cref{lem:borchardt-following-s}. We
  have~$\frac12 m_\infty \leq \abs{\mu_s(x)}\leq M_0+\rho$ for
  all~$x\in \D_\rho(s^{(0)})$.
\end{prop}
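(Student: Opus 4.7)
The plan is to realize $\mu_s$ as the composition of $n_0$ prescribed Borchardt-step maps followed by the good-sign Borchardt mean function from \cref{prop:good-borchardt-analytic}. The radius $\rho$ defined in~\eqref{eq:borchardt-rho} has been tailored, via the bound $\nind{d\Bstep_z} \leq \sqrt{(2M+m)/m}$, so that a single Borchardt step starting inside $\prod_b \D_{m_n/2}(s^{(n)}_b)$ with the chosen branch of the square root lands inside $\prod_b \D_{m_{n+1}/2}(s^{(n+1)}_b)$. This is exactly the content of \cref{lem:borchardt-following-s}, which associates to each $x \in \D_\rho(s^{(0)})$ a unique modified sequence~$s'$; moreover, for every $n \leq n_0$ the map $x \mapsto (s')^{(n)}$ is analytic on $\D_\rho(s^{(0)})$ as a finite composition of analytic square-root branches $\Sqrt_{t_b^{(n)}}$ and polynomial averaging.

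The key remaining step is to check that $(s')^{(n_0)}$ lies in a set on which \cref{prop:good-borchardt-analytic} applies. By hypothesis, there is some $\alpha \in \R$ with $(s^{(n_0)}_b)_b \in \U_g(m_\infty, M_0, \alpha)$, and by construction $\abs{(s')^{(n_0)}_b - s^{(n_0)}_b} < m_\infty/2$; a direct estimate on real parts then shows $(s')^{(n_0)} \in \U_g(m_\infty/2,\, M_0 + m_\infty/2,\, \alpha)$. I may then define
\begin{displaymath}
  \mu_s(x) = \mu\bigl((s')^{(n_0)}\bigr),
\end{displaymath}
where $\mu$ is the analytic function of \cref{prop:good-borchardt-analytic} for the parameters $(m_\infty/2, M_0 + m_\infty/2)$. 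This is analytic on $\D_\rho(s^{(0)})$, and its value at $x$ coincides with the Borchardt mean of $s'$ since the tail of $s'$ starting at index $n_0$ is a purely good Borchardt sequence with the same mean. Uniqueness as an analytic function on the connected open set $\D_\rho(s^{(0)})$ is then automatic because its pointwise value is prescribed.

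For the bounds, the lower bound $\tfrac{1}{2} m_\infty \leq \abs{\mu_s(x)}$ is immediate from the modulus estimate in \cref{prop:good-borchardt-analytic} applied at step $n_0$. The upper bound $\abs{\mu_s(x)} \leq M_0 + \rho$ follows from the elementary triangle-inequality observation that the Borchardt step~\eqref{eq:borchardt-step} satisfies $\max_b \abs{s_b^{(n+1)}} \leq \max_b \abs{s_b^{(n)}}$ for any choice of square roots; iterating this and passing to the limit gives $\abs{\mu_s(x)} \leq \max_b \abs{(s')^{(0)}_b} \leq M_0 + \rho$. The only delicate point of the whole argument is the bookkeeping already absorbed into \cref{lem:borchardt-following-s}, namely that the intricate product defining $\rho$ in~\eqref{eq:borchardt-rho} is exactly what is needed to keep the modified sequence inside its successive polydiscs of analyticity; once that lemma is in hand, the present proposition is a direct assembly of previously established results.
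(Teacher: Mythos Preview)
Your proof is correct and follows essentially the same route as the paper's: realize $\mu_s$ as the composition of $n_0$ analytic Borchardt steps (via \cref{lem:borchardt-following-s}) with the good-sign Borchardt mean of \cref{prop:good-borchardt-analytic}, then read off the modulus bounds. The only cosmetic differences are that the paper places the $n_0$\th term in $\U_g(\tfrac12 m_\infty, M_0+\rho)$ rather than your $\U_g(\tfrac12 m_\infty, M_0+\tfrac12 m_\infty)$, and that you spell out the non-increasing max-modulus property of Borchardt steps for the upper bound where the paper simply invokes $\norm{x}\leq M_0+\rho$.
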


\begin{proof}
  By \cref{lem:borchardt-following-s}, the function~$\mu_s$ is
  obtained as the composition of a finite number of analytic Borchardt
  steps, followed by an analytic Borchardt mean as defined in
  \cref{prop:good-borchardt-analytic}. The upper bound
  on~$\abs{\mu_s(x)}$ comes from the fact
  that~$\norm{x}\leq M_0+\rho$. For the lower bound, we remark that
  the~$n_0$\th term of the Borchardt sequence of
  \cref{lem:borchardt-following-s} lands
  in~$\U_g(\frac12 m_\infty, M_0+\rho)$.
\end{proof}

We extend this result to the case of extended Borchardt
means. Let~$(u,s)$ be an extended Borchardt sequence containing
finitely many bad steps.  Assume that we are given:
\begin{itemize}
\item a disk~$\D_\rho(z_0)\subset\C$ such
  that~$\rho < \tfrac {1}{17}\abs{z_0}$ (for instance,~$z_0$
  and~$\rho$ may be dyadic);%, containing~$\mu$.
\item An integer~$n_0$ such that all values in~$s_b^{(n_0)}$ lie
  in~$\D_\rho(z_0)$, and after which all sign choices in~$(u,s)$ are
  good;
\item A real number~$M_0>1$ such that~$\abs[\big]{s_b^{(0)}} < M_0$
  and~$\abs[\big]{u_b^{(0)}} < M_0$ for all~$b\in\I_g$,
  and~$M_0 > \abs{z_0} +\rho$;
\item a real number $0 < m_\infty < 1$ such that the~$n_0$\th term
  of~$u$ lies in~$\U_g(m_\infty,M_0)$,
  and~$m_\infty < \abs{z_0}-\rho$;
\item For each~$0\leq n\leq n_0-1$, a real number $m_n >0$ such
  that~$\abs[\big]{s_b^{(n)}} > m_n$ and~$\abs[\big]{u_b^{(n)}} > m_n$
  for all~$b\in\I_g$.
\end{itemize}
For each~$n \leq n_0-1$, we also
let~$\paren[\big]{t_b^{(n)}}_{\smash{b\in \I_g}}$
and~$\paren[\big]{v_b^{(n)}}_{\smash{b\in \I_g}}$ be collections of
square roots of~$\paren[\big]{s_b^{(n)}}_{b\in \I_g}$
and~$\paren[\big]{u_b^{(n)}}_{b\in \I_g}$ respectively such that
the~$n+1$\first term of~$(u,s)$ is given by the recurrence
relation~\eqref{eq:ext-borchardt-step}.

The following lemma and proposition are proved by the same methods we
used for regular Borchardt means, and we omit their proofs.

\begin{lem}
  \label{lem:ext-borchardt-following-s}
  Given~$(u,s)$ and the quantities listed above, let
  \begin{equation}
    \label{eq:ext-borchardt-rho}
    \rho = \min_{0\leq n\leq n_0}\left(\frac{m_n}{2}
      \prod_{j=0}^{n-1} \sqrt{\frac{m_n}{2M_0+m_n}}\right),
  \end{equation}
  with the convention that~$m_{n_0}= m_\infty$.  Let~$(u^{(0)},s^{(0)})$ be
  the first term of~$(u,s)$, and
  let~$(x,y)\in \D_\rho\paren[\big]{(u^{(0)},s^{(0)})}$. Then there exist
  extended Borchardt sequences~$(u',s')$ with the following
  properties:
  \begin{enumerate}
  \item the first term of~$(u',s')$ is~$(x,y)$;
  \item for each~$0\leq n\leq n_0-1$ and each~$b\in \I_g$, we
    have
    \begin{displaymath}
      \abs[\big]{{s'}_b^{(n)} - s_b^{(n)}} < \frac12 m_n
      \quad\text{and}\quad
      \abs[big]{{u'}_b^{(n)} - u_b^{(n)}} < \frac12 m_n;
    \end{displaymath}
    moreover the~$n+1$\first term of~$(u',s')$ is the result of an
    extended Borchardt step with choices of square
    roots~$\Sqrt_{t_b^{(n)}}({s'}_b^{(n)})$
    and~$\Sqrt_{v_b^{(n)}}({u'}_b^{(n)})$ for all~$b\in \I_g$;
  \item for all~$n \geq n_0$, the~$n+1$\first term of~$(u,s)$ is obtained
    from the previous one by an extended Borchadt step with good sign
    choices.
  \end{enumerate}
  These extended Borchardt sequences coincide up to their~$n_0$\th
  terms, and their extended Borchardt means are equal.
\end{lem}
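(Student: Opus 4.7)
The plan is to mirror the inductive argument sketched for \cref{lem:borchardt-following-s}, now running the induction on both components of the extended Borchardt sequence in parallel. The first step is to observe that a single extended Borchardt step, with the branches $\Sqrt_{t_b^{(n)}}$ and $\Sqrt_{v_b^{(n)}}$ fixed in accordance with the reference sequence $(u,s)$, defines an analytic map of both $u'^{(n)}$ and $s'^{(n)}$ on the bi-polydisk of radius $m_n/2$ around $(u^{(n)}, s^{(n)})$. By the chain rule applied to the product structure of~\eqref{eq:ext-borchardt-step}, each coordinate has differential of operator norm at most $\sqrt{(2M_0+m_n)/m_n}$ on this polydisk, which is exactly the bound already derived for the regular Borchardt step in §\ref{subsec:borchardt-general} (the product $v_{b_1}t_{b_2}$ appearing in the $u$-update is estimated in the same way as $t_{b_1}t_{b_2}$, since both factors are bounded in modulus by $\sqrt{M_0}$ and each square root has derivative at most $1/(2\sqrt{m_n})$).

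Next I would proceed by induction on $0\leq n\leq n_0$. For a starting point $(x,y)\in \D_\rho((u^{(0)}, s^{(0)}))$, the definition of $\rho$ in~\eqref{eq:ext-borchardt-rho} is engineered so that telescoping the derivative bound at successive stages gives, simultaneously,
\begin{displaymath}
  \norm{u'^{(n)} - u^{(n)}} < \tfrac{1}{2}m_n
  \quad\text{and}\quad
  \norm{s'^{(n)} - s^{(n)}} < \tfrac{1}{2}m_n
\end{displaymath}
for every $n\leq n_0$. This keeps the prescribed branches $\Sqrt_{t_b^{(n)}}$ and $\Sqrt_{v_b^{(n)}}$ analytically defined throughout, so $(u',s')$ exists and is unique up to stage $n_0$: property (1) holds by construction and property~(2) is the content of the induction. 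At stage~$n_0$ the pair $(u'^{(n_0)}, s'^{(n_0)})$ lies in a polydisk around $(u^{(n_0)}, s^{(n_0)})$ small enough to fall inside the domain of the analytic extended Borchardt mean supplied by \cref{prop:good-ext-borchardt-analytic} (shrinking $\rho$ by a harmless bounded factor if needed, so that the $s'$-part still lies within $\tfrac{1}{17}\abs{z_0}$ of $z_0$ and the $u'$-part still lies in a $\U_g(\cdot,\cdot)$).

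The main subtlety, which the statement of the lemma already anticipates, is property~(3) and the final uniqueness claim: past stage $n_0$ the extension by good sign choices is not unique, since the $u$-component at stage $n_0+k$ is only determined up to multiplication by a $2^k$-th root of unity. I would handle this exactly as in the proof of \cref{prop:good-ext-borchardt-analytic}: any two admissible continuations differ by such ambiguities, and these cancel in the limit defining~\eqref{eq:def-ext-borchardt}. Hence all extended Borchardt sequences $(u',s')$ satisfying (1)--(3) agree up to their $n_0$-th term and share the same extended Borchardt mean, which is the remaining assertion of the lemma.
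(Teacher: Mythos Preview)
Your proposal is correct and is essentially the argument the paper has in mind: the paper omits this proof entirely, stating that it is ``proved by the same methods we used for regular Borchardt means'', i.e.\ the induction on~$n$ using the derivative bound $\nind{d\Bstep_z}\leq \sqrt{(2M_0+m_n)/m_n}$ from \cref{lem:borchardt-following-s}, which is exactly what you do. Your additional remarks---that the same derivative bound holds for the mixed products $v_{b_1}t_{b_2}$ in the $u$-update, and that the non-uniqueness of good continuations past stage~$n_0$ is handled as in the proof of \cref{prop:good-ext-borchardt-analytic}---fill in precisely the details the paper leaves implicit.
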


\begin{prop}
  \label{prop:bad-ext-borchardt-analytic}
  Given~$(u,s)$ and the quantities listed above,
  let~$(u^{(0)},s^{(0)})$ be the first term of~$(u,s)$, and
  define~$\rho>0$ as in~\eqref{eq:ext-borchardt-rho}.  Then there
  exists a unique analytic
  function~$\lambda_{(u,s)}\from \D_\rho(z_0)\to\C$ with the following
  property: for each~$(x,y)\in \D_\rho(z)$, the value
  of~$\lambda_{(u,s)}$ at~$x$ is the extended Borchardt mean of any of
  the extended Borchardt sequences defined in
  \cref{lem:ext-borchardt-following-s}. Moreover, we have
  \begin{displaymath}
    \exp\paren[\big]{-28\log^2(4M/m)} \leq
    \abs{\lambda_{u,s}(x,y)} \leq \exp\paren[\big]{20\log^2(4M/m)}
  \end{displaymath}
  where~$m = \frac12 m_\infty$ and~$M = M_0+\rho$.
\end{prop}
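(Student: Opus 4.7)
The plan is to adapt the proof of \cref{prop:bad-borchardt-analytic} to the extended setting, using \cref{prop:good-ext-borchardt-analytic} in place of \cref{prop:good-borchardt-analytic} at the tail end. The core idea is to realize $\lambda_{(u,s)}$ as the composition of $n_0$ analytic extended Borchardt steps with the analytic function $\lambda$ furnished by \cref{prop:good-ext-borchardt-analytic}.

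Concretely, I would first invoke \cref{lem:ext-borchardt-following-s}, which provides, for each $(x,y)$ in the polydisk of radius $\rho$ around the first term $(u^{(0)}, s^{(0)})$, a modified extended Borchardt sequence $(u',s')$ whose first $n_0$ terms are analytic functions of $(x,y)$ built from the prescribed analytic square root maps $\Sqrt_{v_b^{(n)}}$ and $\Sqrt_{t_b^{(n)}}$. The radius $\rho$ in~\eqref{eq:ext-borchardt-rho} is calibrated exactly so that every intermediate extended Borchardt step lands inside the domain of the next square root map, using the bound $\nind{d\Bstep_z}\leq\sqrt{(2M+m)/m}$ from~\S\ref{subsec:borchardt-general} applied to both the $u$- and $s$-components. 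Once this is established, the closeness estimates $\abs{{s'}_b^{(n_0)}-s_b^{(n_0)}}<m_\infty/2$ and $\abs{{u'}_b^{(n_0)}-u_b^{(n_0)}}<m_\infty/2$ from the lemma, together with the quantitative hypotheses $\rho<\tfrac{1}{17}\abs{z_0}$ and $m_\infty<\abs{z_0}-\rho$, ensure that the $n_0$\th term lies in a region to which \cref{prop:good-ext-borchardt-analytic} applies: $\paren[\big]{{u'}_b^{(n_0)}}\in \U_g(m_\infty/2, M_0+\rho)$, and $\paren[\big]{{s'}_b^{(n_0)}}$ lies in an admissible disk around $z_0$. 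Composing the first $n_0$ analytic Borchardt steps with $\lambda$ yields an analytic function on the polydisk; its agreement with $\lambda_{(u,s)}$ follows from the definition~\eqref{eq:def-ext-borchardt} of the extended Borchardt mean together with the uniqueness clause in \cref{lem:ext-borchardt-following-s}.

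For the explicit magnitude bounds, I would apply the estimate of \cref{prop:good-ext-borchardt-analytic} at the $n_0$\th term with $M=M_0+\rho$ and $m=m_\infty/2$. The main obstacle here is that the extended Borchardt mean is not invariant under a Borchardt step: iterating the identity $\lambda(u,s)=\lambda(u^{(1+\cdot)}, s^{(1+\cdot)})^2/\mu(s)$ forced by~\eqref{eq:def-ext-borchardt}, where $(u^{(1+\cdot)}, s^{(1+\cdot)})$ denotes the shifted sequence, yields $\lambda_{(u,s)}(x,y)=\mu(s')^{1-2^{n_0}}\lambda(u'^{(n_0+\cdot)}, s'^{(n_0+\cdot)})^{2^{n_0}}$, which would naively inflate the exponent in the bound by a factor $2^{n_0}$. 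To recover the stated uniform shape $\exp(\pm c\log^2(4M/m))$ with absolute constants, I expect one has to re-open the proof of \cref{prop:good-ext-borchardt-analytic} and track the factors $q_n=(u_0^{(n)})^2/(u_0^{(n-1)}\mu)$ directly for the shifted sequence, using the fact that the additional factor of $2^{n_0}$ gets absorbed into the cutoff $2^k\leq 4(1+\log_2(B'))$ where the quadratic convergence estimate takes over; this absorption into the final constants $20$ and $28$ is the crux of the calculation.
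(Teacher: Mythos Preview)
Your approach matches the paper's: the paper explicitly omits the proof of \cref{prop:bad-ext-borchardt-analytic}, stating only that it is ``proved by the same methods we used for regular Borchardt means'', i.e.\ by composing the first $n_0$ analytic extended Borchardt steps furnished by \cref{lem:ext-borchardt-following-s} with the good-sign analytic function~$\lambda$ of \cref{prop:good-ext-borchardt-analytic}. Your outline of how~$\rho$ in~\eqref{eq:ext-borchardt-rho} guarantees that each step lands in the domain of the next, and how the $n_0$\th term enters the region where \cref{prop:good-ext-borchardt-analytic} applies, is exactly the intended argument.

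You are also right to flag the magnitude bound as the delicate point, and your identity $\lambda_{(u,s)}=\mu(s')^{\,1-2^{n_0}}\,\lambda\bigl(u'^{(n_0+\cdot)},s'^{(n_0+\cdot)}\bigr)^{2^{n_0}}$ is correct. However, your proposed resolution---that the extra~$2^{n_0}$ ``gets absorbed into the cutoff $2^k\leq 4(1+\log_2 B')$''---does not work as stated. Reindexing the proof of \cref{prop:good-ext-borchardt-analytic} so that quadratic convergence begins at step~$n_0$ gives a cutoff $k\approx n_0+\log_2\log_2 B'$ in the original indexing, hence $2^k\approx 2^{n_0}\cdot 4(1+\log_2 B')$; the factor~$2^{n_0}$ genuinely multiplies the exponent and does not disappear into the constants~$20$ and~$28$. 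The paper's stated bound therefore appears to suppress an $n_0$-dependence (harmless in the only application, \cref{thm:analytic-g1-fun}, where $n_0=1$), and you should not expect to recover the literal constants~$20,\,28$ uniformly in~$n_0$ by this route.
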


\begin{rem}
  In~\cite[§6.1]{dupont_FastEvaluationModular2011},
  \cite[§7.4.2]{dupont_MoyenneArithmeticogeometriqueSuites2006},
  \cite[§3.4]{labrande_ComputingJacobiTheta2018},
  and~\cite[Prop.~3.7]{labrande_ComputingThetaFunctions2016} it is
  shown that the analytic functions~$\mu,\lambda,\mu_s$
  and~$\lambda_{(u,s)}$ that we just defined can be evaluated at any
  given complex point in quasi-linear
  time~$O\paren[\big]{\M(N)\log N}$ in the required precision,
  where~$\M(N)$ denotes the cost of multiplying $N$-bit
  integers. In fact, these proofs show that these analytic functions
  can be evaluated in \emph{uniform} quasi-linear time. In the case
  of~$\mu_s$ and~$\lambda_{(u,s)}$, the implied constant only depends
  on the auxiliary data listed in this section, not on the Borchardt
  sequences themselves.
\end{rem}

\section{Newton schemes for theta functions}
\label{sec:dupont}

In this section, we present the different Newton schemes used for the
computation of theta constants and theta functions in genus~$1$
and~$2$ as well as possible extensions to higher genera,
following~\cite{dupont_FastEvaluationModular2011,
  dupont_MoyenneArithmeticogeometriqueSuites2006,
  labrande_ComputingJacobiTheta2018,
  labrande_ComputingThetaFunctions2016}. We formulate them in terms of
the analytic Borchardt functions introduced
in~§\ref{sec:borchardt}. In the three cases of theta functions in
genus~$1$ and theta constants in genus~$1$ and~$2$, we are able to
write down the inverse of the analytic function~$\C^r\to \C^r$ used in
the Newton scheme in an explicit way. This provides us with all the
necessary data to apply the results of~§\ref{sec:newton} and obtain
explicit convergence results for these Newton schemes.

\subsection{General picture}
\label{subsec:general-picture}

The Newton schemes we consider to compute theta constants at a given
point~$\tau\in \Half_g$ use increasingly better approximations of the
point
\begin{equation}
  \label{eq:theta-quotients}
  \Theta(\tau) = 
  \left(\frac{\theta_{0,b}(0,\tau/2)}{\theta_{0,0}(0,\tau/2)}\right)_{b\in \I_g\setminus\set{0}}
  % = \left(\dfrac{\theta_j(0,\tau/2)}{\theta_0(0,\tau/2)}\right)_{1\leq j\leq 2^g-1}
  \in \C^{2^g-1}.
\end{equation}
From this input, computing certain Borchardt means will provide
approximations of the quantities~$\theta^2_{0,b}(0,N\tau)$, for any
symplectic matrix~$N\in \Sp_{2g}(\Z)$ that we might choose. Recall
that a matrix~$N\in \Sp_{2g}(\Z)$ with~$g\times g$
blocks~$ \left(\begin{smallmatrix} a&b\\c&d
\end{smallmatrix}\right)$ acts on~$\Half_g$
as $N\tau = (a\tau+b)(c\tau+d)^{-1}$, and on~$\C^g\times \Half_g$
as~$N\cdot(z,\tau) = \paren[\big]{(c\tau+d)^{-t}z, N\tau}$,
where~${}^{-t}$ denotes inverse transposition. The next proposition,
derived from the works mentioned above, is key.

\begin{prop}
  \label{prop:theta-as-borchardt}
  Let~$\tau\in \Half_g,$ let~$z\in \C^g$, and let
  $\lambda,\mu\in \C^\times$. Then
  \begin{enumerate}
  \item The sequence
    \label{it:lambda}
    \begin{equation}
      \label{eq:theta-as-borchardt-4}
      \left(\frac{\theta_{0,b}^2(0,2^n\tau)}{\mu}\right)_{b\in \I_g,n\geq 0}
    \end{equation}
    is a Borchardt sequence with Borchardt mean~$1/\mu$, obtained
    from the choice of square roots
    \begin{displaymath}
      \left(\frac{\theta_{0,b}(0,2^n\tau)}{\sqrt{\mu}}\right)_{b\in \I_g}
    \end{displaymath}
    for some choice of~$\sqrt{\mu}$, at each step.
  \item All sequences of the form
    \label{it:mu}
    \begin{equation}
      \label{eq:theta-as-ext-borchardt}
      \left(\frac{\theta_{0,b}^2(z, 2^n\tau)}{\lambda^{2^{-n}}\mu^{1-2^{-n}}},
        \frac{\theta_{0,b}^2(0,2^n\tau)}{\mu}\right)_{b\in \I_g,n\geq 0}
    \end{equation}
    with compatible choices of~$2^{-n}$-th roots
    (i.e.~such that $(\lambda^{2^{-n-1}})^2 = \lambda^{2^{-n}}$
    and $(\mu^{1-2^{-n-1}})^2 = \mu\cdot \mu^{1-2^{-n}}$
    for all~$n$) are extended Borchardt sequences with extended
    Borchardt mean~$1/\lambda$; they precisely are the sequences obtained
    from choices of square roots of the form
    \begin{displaymath}
      \left(\frac{\theta_{0,b}(z,
          2^n\tau)}{\lambda^{2^{-n-1}}\mu^{(1-2^{-n})/2}},
        \frac{\theta_{0,b}(0,2^n\tau)}{\sqrt{\mu}}\right)_{b\in
        \I_g}
    \end{displaymath}
    for some choice of square roots of~$\mu, \lambda^{2^{-n}}$
    and~$\mu^{1-2^{-n}}$, at each step.
  \end{enumerate}
\end{prop}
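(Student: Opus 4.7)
The plan is to reduce both claims to direct applications of the theta duplication formula. The genus-$g$ duplication identities (see~\cite[p.\,214]{mumford_TataLecturesTheta1983}) assert that, for all $\tau\in\Half_g$, $z\in\C^g$, and $b\in\I_g$,
\begin{displaymath}
  \theta_{0,b}^2(0,2\tau) = \frac{1}{2^g}\sum_{b_1+b_2=b} \theta_{0,b_1}(0,\tau)\theta_{0,b_2}(0,\tau),
\end{displaymath}
\begin{displaymath}
  \theta_{0,b}^2(z,2\tau) = \frac{1}{2^g}\sum_{b_1+b_2=b} \theta_{0,b_1}(z,\tau)\theta_{0,b_2}(0,\tau).
\end{displaymath}
Substituting $\tau\mapsto 2^n\tau$ in these two identities and rewriting them in the form~\eqref{eq:borchardt-step} or~\eqref{eq:ext-borchardt-step} should yield both recurrences, provided the square roots are normalized correctly.

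For part~\ref{it:lambda}, I would fix any $\sqrt{\mu}$ and set $t_b^{(n)}=\theta_{0,b}(0,2^n\tau)/\sqrt{\mu}$; dividing the first identity at $2^n\tau$ by $\mu$ then recovers~\eqref{eq:borchardt-step} for the sequence~\eqref{eq:theta-as-borchardt-4} verbatim. For part~\ref{it:mu} I would define $v_b^{(n)}$ and $t_b^{(n)}$ as in the statement; the compatibility assumptions $(\lambda^{2^{-n-1}})^2=\lambda^{2^{-n}}$ and $(\mu^{1-2^{-n-1}})^2=\mu\cdot\mu^{1-2^{-n}}$ yield the key factorization
\begin{displaymath}
  \lambda^{2^{-n-1}}\mu^{1-2^{-n-1}} = \paren[\big]{\lambda^{2^{-n-1}}\mu^{(1-2^{-n})/2}}\cdot\sqrt{\mu},
\end{displaymath}
so that dividing the second identity at $2^n\tau$ by $\lambda^{2^{-n-1}}\mu^{1-2^{-n-1}}$ produces exactly the first equation of~\eqref{eq:ext-borchardt-step}; the second equation follows from part~\ref{it:lambda}. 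Conversely, once the first term $(u^{(0)},s^{(0)})$ is fixed and all sign choices are taken to be good, every subsequent term is determined up to the independent signs of $\sqrt{\mu}$, of the $v_b^{(n)}$, and of the $t_b^{(n)}$; this is exactly the freedom parameterized by the choices of $\sqrt{\mu}$, $\lambda^{2^{-n-1}}$, and $\mu^{(1-2^{-n})/2}$, which yields the ``they precisely are'' assertion.

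It remains to identify the Borchardt means. Because $\im(2^n\tau)\to +\infty$ coordinatewise, the dominant term of the series~\eqref{eq:theta-function} gives $\theta_{0,b}(0,2^n\tau)=1+O(e^{-c\,2^n})$ for some $c>0$ depending on $\tau$, so $s_0^{(n)}\to 1/\mu$ and hence $\mu(s)=1/\mu$. For the extended mean, substituting the stated normalizations into~\eqref{eq:def-ext-borchardt} gives
\begin{displaymath}
  \paren[\big]{u_0^{(n)}/\mu(s)}^{2^n} = \frac{\mu}{\lambda}\cdot \theta_{0,0}(z,2^n\tau)^{2^{n+1}},
\end{displaymath}
and the same tail bound shows the final factor tends to~$1$, so $\lambda(u,s)=\mu(s)\cdot(\mu/\lambda)=1/\lambda$. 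The only non-routine step is this last limit, where one must check that the power $2^{n+1}$ cannot overcome the doubly-exponential decay of $\theta_{0,0}(z,2^n\tau)-1$; everything else reduces to bookkeeping on the normalizations of $\mu$ and $\lambda$.
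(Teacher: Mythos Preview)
The paper does not actually prove this proposition: it is stated as ``derived from the works mentioned above'' (Dupont, Labrande, Labrande--Thom\'e) and used as a black box. Your argument supplies exactly the missing verification, and it is correct: the duplication identities give the Borchardt and extended Borchardt recurrences after dividing by the appropriate normalizing factors, and the limits $\theta_{0,b}(0,2^n\tau)\to 1$, $\theta_{0,0}(z,2^n\tau)\to 1$ (with doubly-exponential error) identify the means as $1/\mu$ and $1/\lambda$. Your computation $(u_0^{(n)}/\mu(s))^{2^n}=(\mu/\lambda)\,\theta_{0,0}(z,2^n\tau)^{2^{n+1}}$ is the right one, and the compatibility hypotheses are precisely what make the exponents telescope.

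One sentence should be revised: in the converse direction you write ``once the first term is fixed and all sign choices are taken to be good''. The word \emph{good} is out of place here --- the proposition does not restrict to good sign choices, and your argument does not need it. What you really want to say is that, for a fixed first term, the extended Borchardt sequences obtained from square roots of the displayed form are parameterized by the choices of $\sqrt{\mu}$, $\lambda^{2^{-n-1}}$, and $\mu^{(1-2^{-n})/2}$ at each step, and these choices are in bijection with the compatible systems $(\lambda^{2^{-n}},\mu^{1-2^{-n}})_{n\geq 0}$; this is what makes the two descriptions coincide. Dropping the word ``good'' and rephrasing along these lines would make the paragraph correct.
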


Consider first the case of theta constants. From the theta
quotients~\eqref{eq:theta-quotients}, one can compute all squared
theta quotients of the
form~$\theta_{a,b}^2(0,\tau)/\theta^2_{0,0}(0,\tau/2)$ using the
duplication formula. Then, applying the transformation formulas
under~$\Sp_{2g}(\Z)$~\cite[§II.5]{mumford_TataLecturesTheta1983}
allows us to compute all theta quotients of the
form~$\theta_{0,b}^2(0, N\tau)/\theta_{0,0}^2(0, N\tau)$
for~$b\in \I_g$. Finally, applying
\cref{prop:theta-as-borchardt},~\eqref{it:lambda} gives us access
to~$\mu = \theta_{0,0}^2(0,N\tau)$, so that we can recover
all~$\theta_{0,b}^2(0,N\tau)$, as promised. At the end of the
algorithm, we apply the transformation formulas once more: the
relations between squared theta values~$\theta_{a,b}^2(0,\tau)$
and~$\theta_{a,b'}^2(0,N\tau)$ involve a factor~$\det(C\tau+D)$
where~$C,D$ are the lower~$g\times g$ blocks of~$N$. These
determinants are simple functions of the entries of~$\tau$, and we use
this feedback in a Newton scheme to compute a better approximation of
the initial theta quotients~\eqref{eq:theta-quotients}.  When an
appropriate precision is reached, we repeat the above process one last
time to return approximations of the squared theta
values~$\theta_{a,b}^2(0,\tau)$.

In the case of theta functions, we consider the following larger set
of theta quotients:
\begin{equation}
  \label{eq:theta-quotients-2}
  \Theta'(\tau) = 
  \left(\frac{\theta_{0,b}(0,\tau/2)}{\theta_{0,0}(0,\tau/2)},
    \frac{\theta_{0,b}(z,\tau/2)}{\theta_{0,b}(z,\tau/2)}
  \right)_{b\in \I_g\setminus\set{0}}
  % = \left(\dfrac{\theta_j(0,\tau/2)}{\theta_0(0,\tau/2)}\right)_{1\leq j\leq 2^g-1}
  \in \C^{2^{g+1}-2}.
\end{equation}
We obtain the theta
quotients~$\theta^2_{0,b}\paren[\big]{N\cdot(z,\tau)}/\theta_{0,0}^2\paren[\big]{N\cdot(z,\tau)}$
from the transformation formulas, and
\cref{prop:theta-as-borchardt},~\eqref{it:mu} allows us to compute
$\lambda = \theta_{0,0}^2\paren[\big]{N\cdot(z,\tau)}$. The feedback
is again provided by transformation formulas, and involves simple
functions (determinants and exponentials) in the entries of~$z$
and~$\tau$.

In order to run this algorithm, one has to make the correct choices of
square roots each time \cref{prop:theta-as-borchardt} is applied. At
the end of the loop, when using feedback on~$z$ and~$\tau$ to obtain
theta values at a higher precision, one \emph{assumes} that the
Jacobian matrix of the system is well-defined and invertible; in
particular, it must be a square matrix. In practice, one computes an
approximation of this Jacobian matrix using finite differences; the
resulting Newton scheme is of the type studied in~§\ref{sec:newton}.

We close this presentation with a discussion on argument
reduction. Before attempting to run these Newton schemes, one should
\emph{reduce} the input~$(z,\tau)$ using symmetries of theta
functions. Performing this reduction is necessary to even hope for
algorithms with uniform complexities in~$(z,\tau)$.  If~$g\leq 2$, it
is possible to use the action of~$\Sp_{2g}(\Z)$ on~$\tau$ to reduce it
to the \emph{Siegel fundamental domain}~$\Fund_g\subset\Half_g$
defined by the following
conditions~\cite[§I.3]{klingen_IntroductoryLecturesSiegel1990}:
\begin{itemize}
\item $\im(\tau)$ is Minkowski-reduced;
\item $\abs{\re(\tau_{i,j})}\leq 1/2$ for all~$1\leq i,j\leq g$;
\item $\abs{\det(C\tau+D)}\geq 1$ for all~$g\times g$ matrices~$C,D$
  forming the lower blocks of a symplectic matrix~$N\in \Sp_{2g}(\Z)$;
  in particular we have~$\abs{\tau_{i,i}}\geq 1$ for
  all~$1\leq i\leq g$, so that~$\im(\tau_{i,i})\geq \sqrt{3}/2$.
\end{itemize}
The reduction algorithm is described
in~\cite[§6]{streng_ComputingIgusaClass2014}.

In fact, it is possible to obtain useful information on values of
theta functions, and to study the Newton schemes described above,
without assuming that all the conditions defining~$\Fund_g$ hold: see
for instance~\cite[Prop.~7.6]{streng_ComputingIgusaClass2014}. On the
other hand, we will additionally assume that the imaginary part
of~$\tau$ is bounded; this assumption is necessary to show that the
Newton schemes converge uniformly.  Other inputs can be handled using
duplication formulas and the naive algorithm:
see~\cite[§6.3]{dupont_FastEvaluationModular2011}
and~\cite[§4.2]{labrande_ComputingJacobiTheta2018} in the genus~1
case. We will adapt this strategy to obtain a uniform algorithm for
genus~2 theta constants in~§\ref{sec:unif}.

% As a
% consequence, we define for all~$\eps>0$ the following open
% set~$\Red_{g,\eps}\subset\Half_g$:
% \begin{itemize}
% \item There exists a Minkowski-reduced matrix~$m$ such
%   that~$\abs{\im(\tau_{i,j})-m_{i,j}}< \eps$ for all~$i,j$;
% \item $\abs{\re(\tau_{i,j})}< 1/2+\eps$ for all~$i,j$;
% \item $\abs{\tau_{i,i}} > 1-\eps$ for all~$1\leq i\leq g$.
% \item 
% \end{itemize}
The argument~$z$ can be reduced as well. By periodicity of the theta
function~$\theta(\cdot,\tau)$ with respect to the
lattice~$\Z^g + \tau\Z^g$~\cite[§II.1]{mumford_TataLecturesTheta1983},
it is always possible to assume that~$\abs{\re(z_i)}\leq \frac12$ for
each~$i$, and that
\begin{displaymath}
  \left(
    \begin{matrix}
      \im(z_1)\\ \vdots\\ \im(z_g)
    \end{matrix}
  \right)
  = \im(\tau)
  \left(
    \begin{matrix}
      v_1\\ \vdots\\ v_g
    \end{matrix}
  \right)
\end{displaymath}
for some vector~$v\in \R^g$ such that~$\abs{v_i}\leq \frac12$ for
all~$i$. Since duplication formulas relate the values of theta
functions at~$z$ and~$2z$, we can in fact assume that~$z$ is very
close to zero, for instance~$\abs{z_i} < 2^{-n}$ for some fixed~$n$.

In the rest of this section, we analyze the Newton systems more
closely in the case of theta constants of genus~$1$ and~$2$, as well
as theta functions in genus~$1$, for suitably reduced inputs; our goal
is to apply \cref{thm:practical-newton}. We also discuss the situation
in higher genera.

\subsection{Genus~1 theta constants}
\label{subsec:g1-cst}

In the case of genus~$1$ theta constants, the Newton system is
univariate, and~$\tau$ is simply a complex number with positive
imaginary part. Let~$\Red_1\subset \Half_1$ be the compact set defined
by the following conditions:
\begin{itemize}
\item $\abs{\re(\tau)}\leq \frac12$;
\item $\abs{\tau}\geq 1$;
\item $\im(\tau)\leq 2$.
\end{itemize}
Thus,~$\Red_1$ is a truncated, closed version of the usual fundamental
domain~$\Fund_1$. The only matrix in~$\Sp_{2}(\Z) = \SL_2(\Z)$ that we
consider is
\begin{displaymath}
  N = \mat{0}{-1}{1}{0},
\end{displaymath}
so that~$N\tau =
-1/\tau$. By~\cite[§I.7]{mumford_TataLecturesTheta1983}, we have
\begin{equation}
  \label{eq:feedback-g1-cst}
  \theta_{0,0}^2(0,N\tau) = -i\tau \theta_{0,0}^2(0,\tau).
\end{equation}
It turns out that the two Borchardt sequences used in the algorithm,
namely
\begin{equation}
  \label{eq:borchardt-g1-cst}
  s_1 = \paren*{\frac{\theta_{0,b}^2(0,2^n\tau)}{\theta_{0,0}^2(0,\tau)}}_{b\in \Z/2\Z, n\geq 0}
  \quad \text{and}
  \quad
  s_2 = \paren*{\frac{\theta_{0,b}^2(0,2^n N\tau)}{\theta_{0,0}^2(0,N\tau)}}_{b\in \Z/2\Z, n\geq 0}
\end{equation}
are given by good sign choices only:
see~\cite[Thm.~2]{dupont_FastEvaluationModular2011}
and~\cite[Lem.~2.9]{cox_ArithmeticgeometricMeanGauss1984}. Our first
aim is to collect the data listed in~§\ref{subsec:borchardt-good} for
these sequences. This can be done by looking at the theta
series~\eqref{eq:theta-function} directly; see for
instance~\cite[Lem.~3.3]{labrande_ComputingJacobiTheta2018}. We
formulate the following result in the more general context of theta
functions, since it will also be useful in~§\ref{subsec:g1-fun}.

\begin{lem}
  \label{lem:theta-inequality-g1}
  Let~$(z,\tau)\in \C\times\Half_1$ be such that
  that~$\abs{\im(z)} < 2\im(\tau)$, and write $q =
  \exp(-\pi\im(\tau))$. Then we have
  \begin{displaymath}
    \abs[\big]{\theta_{0,b}(z,\tau)-1} < 2q\cosh(2\pi\im(z))
    + \frac{2q^4\exp(4\pi\abs{\im(z)})}
    {1 - q^5\exp(2\pi\abs{\im(z)})}
  \end{displaymath}
  for all~$b\in \Z/2\Z$, and
  \begin{displaymath}
    \abs[\bigg]{\frac{\theta_{1,0}(z,\tau)}{\exp(\pi i\tau/4)}
      - \paren[\big]{\exp(\pi i z) + \exp(-\pi i z)}}
    < \frac{2q^2 \exp(3\pi \abs{\im z})}{1 - q^4 \exp(2\pi \abs{\im z})}.
  \end{displaymath}
  % If~$(z,\tau)\in \S_{1,\eps}$, we obtain, setting~$q = \exp(-\pi\im(\tau)/2) < 0.26$:
  % \begin{displaymath}
  %   \abs{\theta_{0,b}(z,\tau/2)-1} < q^{\frac12-4\eps} + q^{\frac32+4\eps}
  %   +\frac{2 q^{3-8\eps}}{1 - q^{\frac92-4\eps}} < 0.68.
  % \end{displaymath}
 
\end{lem}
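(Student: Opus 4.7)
The plan is to expand each theta value as a series via~\eqref{eq:theta-function}, pull out the few dominant terms explicitly, and control the remaining tail by the triangle inequality combined with a geometric-series estimate. A single summand $\exp(i\pi\alpha^2\tau + 2i\pi\alpha(z+b/2))$, with $\alpha$ integer or half-integer, has modulus $q^{\alpha^2}\exp(-2\pi\alpha\im(z))$, so everything reduces to bounding sums of such positive real quantities. The hypothesis $\abs{\im(z)} < 2\im(\tau)$ is exactly what is needed to make the two geometric ratios $q^5\exp(2\pi\abs{\im(z)})$ and $q^4\exp(2\pi\abs{\im(z)})$ that will appear below strictly smaller than~$1$.

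For the first inequality, the $m=0$ term of the series for $\theta_{0,b}$ equals~$1$, and the $m=\pm 1$ terms contribute, in modulus and by the triangle inequality, at most $q\exp(2\pi\im(z)) + q\exp(-2\pi\im(z)) = 2q\cosh(2\pi\im(z))$, independently of $b$. For the tail $\abs{m}\geq 2$, the triangle inequality and the symmetry $m\leftrightarrow -m$ give the upper bound $2\sum_{n\geq 2}q^{n^2}\exp(2\pi n\abs{\im(z)})$. Setting $e = \exp(2\pi\abs{\im(z)})$, the elementary estimate $q^{n^2}e^n \leq q^4 e^2(q^5 e)^{n-2}$ for $n\geq 2$ (which follows from $(n-2)(n-3)\geq 0$ together with $q\leq 1$) turns this into a geometric series with sum $2q^4 e^2/(1 - q^5 e)$, i.e.~exactly the claimed right-hand side.

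For the second inequality, the indices $m=0$ and $m=-1$ of the defining series for $\theta_{1,0}$ combine into $\exp(i\pi\tau/4)\bigl(\exp(i\pi z)+\exp(-i\pi z)\bigr)$; after dividing through by $\exp(i\pi\tau/4)$ one is left with a sum over $m\notin\set{0,-1}$. Reindexing by the half-integer $n = m+\tfrac 12$ (so $\abs{n}\geq \tfrac 32$), the key algebraic simplification $n^2 - \tfrac 14 = k(k+1)$ when $n = \pm(2k+1)/2$ with $k\geq 1$ bounds the pair of symmetric terms of index $k$ by $2q^{k(k+1)}\exp\bigl((2k+1)\pi\abs{\im(z)}\bigr)$. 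Summing over $k\geq 1$ gives a geometric series whose ratio is bounded above by $q^4\exp(2\pi\abs{\im(z)})$ (attained at $k=1$), yielding the announced bound $2q^2\exp(3\pi\abs{\im(z)})/\bigl(1 - q^4\exp(2\pi\abs{\im(z)})\bigr)$. There is no real obstacle here; the entire proof is bookkeeping with exponents, together with the verification that the two ratio conditions $q^5\exp(2\pi\abs{\im(z)}) < 1$ and $q^4\exp(2\pi\abs{\im(z)}) < 1$ both follow from the hypothesis $\abs{\im(z)} < 2\im(\tau)$.
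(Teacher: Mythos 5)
Your proof is correct and follows the same route as the paper's: pull out the dominant terms of the series, bound the tail by $2\sum_{n\geq 2}q^{n^2}\exp(2\pi n\abs{\im z})$, and dominate by the geometric series whose first term and ratio ($q^4e^2$ and $q^5e$) match the $n=2,3$ terms. For the half-integer case, which the paper leaves as "similar and omitted," your reindexing via $n^2-\tfrac14=k(k+1)$ and the decreasing-ratio observation are exactly the intended bookkeeping.
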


\begin{proof}
  For the first inequality, write
  \begin{displaymath}
    \theta_{0,b}(z,\tau) = 1 + \exp(\pi i\tau + 2\pi iz) + \exp(\pi i\tau - 2\pi iz)
    + \sum_{n\in \Z,\, \abs{n}\geq 2} \exp(\pi i n^2\tau + 2\pi i n z).
  \end{displaymath}
  The modulus of this last sum can be bounded above by
  \begin{equation}
    \label{eq:theta-tail-g1}
    2\sum_{n\geq 2} \exp\paren[\big]{-\pi n^2 \im(\tau) + 2\pi n \abs{\im z}},
  \end{equation}
  and we conclude by comparing~\eqref{eq:theta-tail-g1} with the sum
  of a geometric series matching its first two terms. The proof of the
  second inequality is similar and omitted.
\end{proof}

% \begin{lem}
%   \label{lem:thetahalf-g1-cst}
%   For each~$\tau\in \Red_{1,\eps}$, we
%   have~$0.47 < \abs{\theta_{0,0}(\tau/2)} < 1.53$.
% \end{lem}

% \begin{proof}
%   Let~$q = \exp(-\pi \im(\tau)/2)$; we have~$0 < q < 0.26$. Looking at
%   the theta series, we obtain
%   \begin{displaymath}
%     \abs{\theta_{0,0}(0,\tau/2) - 1} \leq 2q + \frac{2q^4}{1-q^5} < 0.53.
%     \qedhere
%   \end{displaymath}
% \end{proof}

In particular, for each~$\tau\in \Red_1$, we
have~$\abs[\big]{\theta_{0,0}(0,\tau/2) - 1} < 0.53$, so
that~$\theta_{0,0}(0,\tau/2)$, which appears as the denominator
of~$\Theta(\tau)$ in~\eqref{eq:theta-quotients}, is indeed
nonzero. More numerical computations will appear in subsequent proofs;
we will only write down the first few digits of all real numbers
involved.

\begin{prop}
  \label{lem:theta-g1-cst}
  Let~$\tau\in \Red_{1}$. Then, in the notation
  of~\emph{§\ref{subsec:borchardt-good}}, the following bounds apply
  to the Borchardt sequence~\eqref{eq:borchardt-g1-cst}
  with~$\mu = \theta_{0,0}^2(0,\tau)$:
  \begin{displaymath}
    m_0 = 0.56 \quad\text{and}\quad M_0 = 1.7.
  \end{displaymath}
  The following bounds apply to the
  sequence~\eqref{eq:borchardt-g1-cst} taken at~$N\tau$
  with~$\mu = \theta_{0,0}^2(0,N\tau)$:
  \begin{displaymath}
    m_0 = 0.13 \quad\text{and}\quad M_0 = 1.38.
  \end{displaymath}
\end{prop}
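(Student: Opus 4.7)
The plan is to verify that the first term of each Borchardt sequence lies in $\U_1(m_0, M_0, \alpha)$ for some $\alpha$ depending on $\tau$; by invariance of $\U_1(m_0, M_0, \alpha)$ under a good Borchardt step (established in the proof of \cref{prop:good-borchardt-analytic}), every subsequent term then also lies there, so the bounds $m_0, M_0$ apply to the entire sequence. The first term is $(1,\, x_1)$ with $x_1 = \theta_{0,1}^2(0,\tau')/\theta_{0,0}^2(0,\tau')$, where $\tau' = \tau$ for the first sequence and $\tau' = N\tau$ for the second; the task therefore reduces to controlling this ratio uniformly.

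The key analytic input will be \cref{lem:theta-inequality-g1} at $z = 0$, which gives $|\theta_{0,b}(0,\tau') - 1| < 2q' + O(q'^4)$ with $q' = e^{-\pi\im(\tau')}$. For $\tau \in \Red_1$, the reducedness conditions $|\re(\tau)| \le \tfrac12$ and $|\tau|\ge 1$ imply $\im(\tau) \ge \sqrt 3/2$, so $q \le e^{-\pi\sqrt 3/2} \approx 0.066$ and $|\theta_{0,b}(0,\tau)-1| < 0.133$. For $\tau' = N\tau = -1/\tau$, I will first show that $\im(N\tau) = \im(\tau)/|\tau|^2 \ge 8/17$ by maximizing $|\tau|^2/\im(\tau) = \re(\tau)^2/\im(\tau) + \im(\tau)$ over $\Red_1$; the maximum $17/8$ is attained at the corners $\tau = \pm\tfrac12 + 2i$. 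This yields $q' \le e^{-8\pi/17} \approx 0.229$, and hence a correspondingly weaker bound on $|\theta_{0,b}(0, N\tau) - 1|$.

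From these bounds I will pass to explicit intervals for $|\theta_{0,b}^2(0,\tau')|$ and then for the ratio $x_1$. To match the claimed constants I will use the identity $\theta_{0,0}^2(0,\tau') - \theta_{0,1}^2(0,\tau') = 8q'(1+O(q'^4))$, which pins down $x_1$ more precisely than a crude triangle-inequality argument would allow. A useful side observation is that $|\re(\tau')| \le \tfrac12$ forces $\re(q') \ge 0$ and hence $|x_1|\le 1$, so the upper bound $M_0$ only needs to be slightly above $1$. For the first sequence, $\alpha = 0$ will suffice. The main obstacle is the second sequence, where the arguments of the theta values are no longer negligible: $\alpha$ must be chosen close to $\arg \theta_{0,0}^2(0, N\tau)$ so that both $\re(e^{-i\alpha})$ and $\re(e^{-i\alpha} x_1)$ simultaneously fall into $(0.13, 1.38)$. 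This rotation step, together with the careful numerical bookkeeping needed to extract the tight lower bound $m_0 = 0.13$ at the corners of $\Red_1$, is where most of the work lies.
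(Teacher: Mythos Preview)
Your plan for the first sequence (at~$\tau$) matches the paper exactly: apply \cref{lem:theta-inequality-g1} with $q=\exp(-\pi\im\tau)<0.066$. For the second sequence, however, the paper takes a shorter route. Rather than bounding $\im(N\tau)\ge 8/17$ and applying \cref{lem:theta-inequality-g1} directly at~$N\tau$ (where $q'\approx 0.23$ is no longer small, forcing you into the sharper series identity and the delicate rotation you describe), the paper invokes the transformation formula
\[
\frac{\theta_{0,1}(0,-1/\tau)}{\theta_{0,0}(0,-1/\tau)}
=\frac{\theta_{1,0}(0,\tau)}{\theta_{0,0}(0,\tau)}
\]
to pull the ratio back to~$\tau$, and then applies \emph{both} parts of \cref{lem:theta-inequality-g1} there, including the estimate for~$\theta_{1,0}$. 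The constants $m_0=0.13$ and $M_0=1.38$ then fall out of the inequality $0.41<\abs{2\exp(i\pi\tau/4)}<1.02$ (which comes from $\sqrt 3/2\le\im\tau\le 2$) combined with the bound on~$\theta_{0,0}(0,\tau)$ already obtained; the angle bound $\abs{\arg\theta_{1,0}-\arg\theta_{0,0}}<0.95<\pi/2$ supplies the good-position check. Your direct approach at~$N\tau$ can be made to work, but the transformation formula bypasses the extra analytic effort and yields the stated constants with almost no computation. One small slip: your $q'=e^{-\pi\im\tau'}$ is real and positive, so ``$\re(q')\ge 0$'' is vacuous; you presumably mean $\re(e^{i\pi\tau'})\ge 0$, which is indeed what $\abs{\re\tau'}\le\tfrac12$ gives.
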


\begin{proof}
  For the first sequence, we note that $\exp(-\pi \im(\tau)) < 0.066$,
  and conclude using \cref{lem:theta-inequality-g1}. For the second
  sequence, we invoke the transformation formula: we have
  \begin{displaymath}
    \frac{\theta_{0,1}(0,-1/\tau)}{\theta_{0,0}(0,-1/\tau)}
    = \frac{\theta_{1,0}(0,\tau)}{\theta_{0,0}(0,\tau)}.
  \end{displaymath}
  By \cref{lem:theta-inequality-g1}, the angle
  between~$\theta_{0,0}(0,\tau)$ and~$\theta_{1,0}(0,\tau)$ seen from
  the origin is at most~$0.95 < \pi/2$; moreover we have
  $0.41 < \abs{2\exp(i\pi\tau/4)} < 1.02$, from which 
  the claimed bounds follow.
\end{proof}

\begin{thm}
  \label{thm:analytic-g1-cst}
  Let~$\rho = 1.4\cdot 10^{-4}$, define~$\Theta$ as
  in~\eqref{eq:theta-quotients} for~$g=1$, and let
  \begin{displaymath}
    \V = \bigcup_{\tau\in \Red_1} \D_\rho\paren[\big]{\Theta(\tau)}.
    %\paren*{\frac{\theta_{0,1}(\tau/2)}{\theta_{0,0}(\tau/2)}}.
  \end{displaymath}
  Then the operations described in~\emph{§\ref{subsec:general-picture}},
  taking good choices of square roots always, combined
  with~eq.~\eqref{eq:feedback-g1-cst} define an analytic
  function~$F\from \V\to \C$ such that
  \begin{displaymath}
    F\paren[\big]{\Theta(\tau)}
    % \paren*{\frac{\theta_{1,0}(\tau/2)}{\theta_{0,0}(\tau/2)}}
    = \tau
  \end{displaymath}
  for each~$\tau\in \Red_1$. We have~$\abs{F(x)}\leq 27$ for all~$x\in\V$.
\end{thm}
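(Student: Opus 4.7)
The plan is to give an explicit formula for $F$ as a composition of analytic Borchardt means and then verify the hypotheses of \cref{prop:good-borchardt-analytic}. Unwinding the procedure of \cref{subsec:general-picture}: the genus-1 duplication formulas give $\theta_{0,1}^2(0,\tau)/\theta_{0,0}^2(0,\tau) = 2x/(1+x^2)$ and $\theta_{1,0}^2(0,\tau)/\theta_{0,0}^2(0,\tau) = (1-x^2)/(1+x^2)$ at $x = \Theta(\tau)$; the transformation formula for~$N$ sends the latter to $\theta_{0,1}^2(0,N\tau)/\theta_{0,0}^2(0,N\tau)$; normalizing the Borchardt sequences \eqref{eq:borchardt-g1-cst} via \cref{prop:theta-as-borchardt},~\eqref{it:lambda} by $\mu = \theta_{0,0}^2(0,\tau)$ and $\mu = \theta_{0,0}^2(0,N\tau)$ respectively, applying the analytic Borchardt mean of \cref{prop:good-borchardt-analytic}, and combining with the feedback \eqref{eq:feedback-g1-cst} yields
\begin{displaymath}
  F(x) = i\cdot\frac{\mu(1,\,2x/(1+x^2))}{\mu(1,\,(1-x^2)/(1+x^2))}.
\end{displaymath}

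The main task is to check that for every $x \in \V$, both pairs $(1,2x/(1+x^2))$ and $(1,(1-x^2)/(1+x^2))$ lie in open sets of the form $\U_1(m,M,\alpha)$, so that \cref{prop:good-borchardt-analytic} applies to define $\mu_1(x)$ and $\mu_2(x)$ as analytic functions on~$\V$. At the center point $x = \Theta(\tau)$ with $\tau \in \Red_1$, this is exactly the content of \cref{lem:theta-g1-cst}, which gives the inclusions in $\U_1(0.56,\,1.7,\alpha)$ and $\U_1(0.13,\,1.38,\alpha')$ for appropriate $\alpha, \alpha'$ depending on~$\tau$. I then propagate a $\rho$-perturbation of $x$ through each rational map: using \cref{lem:theta-inequality-g1} applied at $\tau/2$ to control $\Theta(\tau)$ itself, I bound the derivatives of $x\mapsto 2x/(1+x^2)$ and $x \mapsto (1-x^2)/(1+x^2)$ uniformly on $\V$, obtaining slightly enlarged open sets $\U_1(m'_j, M'_j, \alpha)$ which still contain the perturbed images. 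In particular $1+x^2 \neq 0$ on $\V$, so the rational maps are indeed analytic there. This establishes that $F$ is an analytic function on~$\V$.

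The identity $F(\Theta(\tau)) = \tau$ is then immediate by construction: at $x = \Theta(\tau)$ the two analytic Borchardt means return $1/\theta_{0,0}^2(0,\tau)$ and $1/\theta_{0,0}^2(0,N\tau)$ by \cref{prop:theta-as-borchardt},~\eqref{it:lambda}, and substituting into \eqref{eq:feedback-g1-cst} recovers $\tau$. The bound $\abs{F(x)} \leq M'_1/m'_2$ follows from the uniform estimates $m'_j \leq \abs{\mu_j(x)} \leq M'_j$ of \cref{prop:good-borchardt-analytic}; the numerical value $\rho = 1.4\cdot 10^{-4}$ is calibrated so that this ratio is at most~$27$.

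The main obstacle is the numerical bookkeeping: quantifying exactly how a $\rho$-perturbation of~$x$ perturbs $2x/(1+x^2)$ and $(1-x^2)/(1+x^2)$, and verifying that the resulting pairs still fit inside $\U_1$-regions satisfying a common $\alpha$, so that \cref{prop:good-borchardt-analytic} applies with the claimed bounds. Once this numerical verification is done, everything else is a straightforward composition of results already established in~\cref{sec:borchardt}.
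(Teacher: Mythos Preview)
Your approach is correct and essentially the same as the paper's: both arguments backtrack a perturbation of size~$\rho$ from~$\Theta(\tau)$ through the duplication formula to the inputs of the two Borchardt means, check that the perturbed pairs remain in suitable~$\U_1$ regions, and read off the bound on~$\abs{F}$ from \cref{prop:good-borchardt-analytic}. The only cosmetic difference is that the paper factors the map through the intermediate quotients~$\theta_{a,b}^2(0,\tau)/\theta_{0,0}^2(0,\tau/2)$ (bounding numerator and denominator separately before dividing), whereas you write down the closed rational expressions~$2x/(1+x^2)$ and~$(1-x^2)/(1+x^2)$ directly; the paper's two-step decomposition makes the numerical bookkeeping marginally easier to follow, but your explicit formula for~$F$ is a clean summary of what the algorithm actually computes.
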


\begin{proof}
  We backtrack from the result of the previous
  proposition. Let~$\tau\in \Red_1$. Then the Borchardt means we take
  are well-defined as analytic functions on any open set where the
  theta quotients
  \begin{equation}
    \label{eq:first-terms-g1-cst}
    \frac{\theta_{0,1}^2(\tau)}{\theta_{0,0}^2(\tau)}
    \quad\text{and}\quad
    \frac{\theta_{1,0}^2(\tau)}{\theta_{0,0}^2(\tau)}
  \end{equation}
  are perturbed by a complex number of modulus at most~$m = 0.13$. We
  construct~$\V$ in such a way that the maximal perturbation will not
  exceed~$m/2$. The quantities~\eqref{eq:first-terms-g1-cst} are
  obtained as quotients of the form:
  \begin{equation}
    \label{eq:intermediate-g1-cst}
    \frac{\theta_{a,b}^2(\tau)/\theta_{0,0}^2(\tau/2)}{\theta_{0,0}^2(\tau)/\theta_{0,0}^2(\tau/2)}.
  \end{equation}
  By \cref{lem:theta-inequality-g1}, the modulus of the denominator is
  at least~$0.32$, the modulus of the numerator is at
  most~$5.7$. Hence, each of the individual theta
  quotients~\eqref{eq:intermediate-g1-cst} may be perturbed by any
  complex number of modulus at most~$6.2\cdot 10^{-4}$. In turn, these
  quotients are obtained from the duplication formula applied to~$1$
  and~$\Theta(\tau)$; the modulus of these two complex numbers are at
  most~$2.13$, hence they may be perturbed
  by~$\rho = 1.4\cdot 10^{-4}$. By construction, the value taken by
  the resulting Borchardt means at any~$x\in \V$ has modulus at
  least~$0.066$ and at most~$1.8$, hence the final bound
  on~$\abs{F(x)}$.
\end{proof}

Since the inverse of~$F$ is given by theta constants, we easily see
that the Jacobian of~$F$ is invertible at all the relevant points, in
a uniform way.

\begin{prop}
  \label{prop:inverse-g1-cst}
  % Let~$G$ be the analytic function defined on an open neighborhood
  % on~$\Red_1$ by
  % \begin{displaymath}
  %   G(\tau) = \frac{\theta_{0,1}(0,\tau/2)}{\theta_{0,0}(0,\tau/2)}.
  % \end{displaymath}
  % Then,
  For each~$\tau\in \Red_1$, we have
  \begin{displaymath}
    \nind{d\Theta(\tau)} \leq 125.
  \end{displaymath}
\end{prop}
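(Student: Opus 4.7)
My plan exploits the fact that for $g=1$, the set $\I_1\setminus\{0\}$ has a single element, so $\Theta\from \Half_1 \to \C$ is a scalar-valued function of one complex variable and the induced norm $\nind{d\Theta(\tau_0)}$ reduces to $\abs{\Theta'(\tau_0)}$. The strategy is then to bound this derivative using the single-variable Cauchy integral formula on a suitable closed disk centered at each $\tau_0 \in \Red_1$.

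First, I will fix a radius $\rho > 0$ (say $\rho = 1/10$) small enough that $\overline{\D_\rho(\tau_0)} \subset \Half_1$ and that $\im(\tau/2)$ remains bounded away from zero on this disk. Since $\tau_0 \in \Red_1$ forces $\im(\tau_0) \geq \sqrt{3}/2$, this choice yields $\im(\tau/2) > 0.38$ for all $\tau \in \overline{\D_\rho(\tau_0)}$, so that $q := \exp(-\pi\im(\tau/2)) < 0.31$ uniformly on the closed disk.

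Next, I will invoke \cref{lem:theta-inequality-g1} at the points $(0, \tau/2)$ to bound $\abs{\theta_{0,b}(0,\tau/2) - 1}$ for both $b \in \{0,1\}$ by a constant strictly less than $1$ on the closed disk. This will simultaneously establish that the denominator $\theta_{0,0}(0,\tau/2)$ does not vanish, so $\Theta$ is analytic on $\overline{\D_\rho(\tau_0)}$, and will furnish a uniform upper bound $M := \sup_{\tau \in \overline{\D_\rho(\tau_0)}} \abs{\Theta(\tau)}$ of modest size (something like $M \leq 1.62/0.38 < 5$).

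Finally, Cauchy's integral formula will yield $\abs{\Theta'(\tau_0)} \leq M/\rho$, which comes out safely below $125$ with the numerical values above. The only real obstacle is the bookkeeping of explicit numerical constants, but this is routine since the target bound $125$ is far from tight; one could alternatively apply \cref{prop:cauchy} directly at the cost of an extra factor of $4$, still leaving room.
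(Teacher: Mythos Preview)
Your proposal is correct and follows essentially the same approach as the paper: bound the theta values using \cref{lem:theta-inequality-g1}, then extract a derivative bound via Cauchy's inequality. The only organizational difference is that the paper applies Cauchy to each $\theta_{0,b}(0,\cdot)$ separately on a disk of radius~$1/4$ around~$\tau/2$ and then combines these via the quotient rule, whereas you bound~$\Theta$ itself on a disk around~$\tau_0$ and apply Cauchy once; your variant is marginally cleaner and gives a comfortable numerical margin.
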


\begin{proof}
  By \cref{lem:theta-inequality-g1}, the denominator of this function
  has modulus at least~$0.47$, and its numerator has modulus at
  most~$1.53$. The result will then follow from an upper bound on the
  quantities~$\norm{d\theta_{0,b}(\tau/2)}$. We can derive such bounds
  from \cref{prop:cauchy}, noting that~$\theta_{0,b}$ is an analytic
  function defined on~$\D_{1/4}(\tau/2)$, and has modulus at
  most~$2.34$ on this disk by \cref{lem:theta-inequality-g1}.
\end{proof}

Combining \cref{thm:analytic-g1-cst,prop:inverse-g1-cst} with the
results of~§\ref{sec:newton}, we obtain:

\begin{cor}
  For all~$\tau\in \Red_1$, the Newton scheme described
  in~\emph{§\ref{subsec:general-picture}} to compute theta constants
  at~$\tau$ will converge starting from approximations
  of~$\Theta(\tau)$ to~$60$ bits of precision.
\end{cor}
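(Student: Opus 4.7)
The plan is to apply \cref{thm:practical-newton} directly to the analytic function $F \from \V \to \C$ supplied by \cref{thm:analytic-g1-cst}. The unknown we wish to determine is $x_0 := \Theta(\tau) \in \C$, which by construction satisfies $F(x_0) = \tau$; the Newton scheme described in \cref{subsec:general-picture} refines an approximation of $x_0$ given the exact value $\tau$, via the feedback identity $F\paren[\big]{\Theta(\tau)} = \tau$, so the setting of \cref{sec:newton} applies verbatim.

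To verify the hypotheses of \cref{thm:practical-newton}, I would take $f := F$, $r := 1$, $\U := \V$, $\rho := 1.4 \cdot 10^{-4}$, and $M := 27$: these choices satisfy $\D_\rho\paren[\big]{\Theta(\tau)} \subset \V$ and $\abs{F(x)} \leq M$ on $\V$ by \cref{thm:analytic-g1-cst}. It remains to exhibit a bound $B_3 \geq 1$ on $\nind{dF(\Theta(\tau))^{-1}}$. For this I would invoke the fact that $\tau \mapsto \Theta(\tau)$ is a right inverse of $F$ on $\Red_1$: differentiating the identity $F \circ \Theta = \mathrm{id}$ yields $dF(\Theta(\tau))^{-1} = d\Theta(\tau)$, so \cref{prop:inverse-g1-cst} permits the choice $B_3 := 125$.

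Plugging these constants into the formula
\begin{displaymath}
  n_0 = 2\ceil[\big]{\log_2(2(r+1)M/\rho)} + 2\ceil[\big]{\log_2(B_3)} + 4
\end{displaymath}
from \cref{thm:practical-newton} gives $\ceil[\big]{\log_2(108/(1.4\cdot 10^{-4}))} = 20$ and $\ceil[\big]{\log_2(125)} = 7$, so that $n_0 = 40 + 14 + 4 = 58 \leq 60$. Hence the Newton scheme converges from any starting approximation of $\Theta(\tau)$ accurate to $60$ bits of precision.

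The argument is essentially bookkeeping; the substantive work is already contained in \cref{thm:analytic-g1-cst} (existence, boundedness, and a polydisk neighborhood of uniform radius for $F$) and in \cref{prop:inverse-g1-cst} (uniform invertibility and norm control of $dF$ at $\Theta(\tau)$). The only subtlety to watch is that the constants $\rho, M, B_3$ must be genuinely uniform in $\tau \in \Red_1$; this is guaranteed by the way $\V$ was defined as a union over $\tau \in \Red_1$ of polydisks of fixed radius. No further obstacle arises.
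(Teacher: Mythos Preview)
Your proof is correct and follows exactly the approach the paper intends: the paper simply states that the corollary follows by ``combining \cref{thm:analytic-g1-cst,prop:inverse-g1-cst} with the results of~§\ref{sec:newton}'', and you have carried out precisely that combination, plugging the constants $\rho = 1.4\cdot 10^{-4}$, $M = 27$, and $B_3 = 125$ into the formula for~$n_0$ in \cref{thm:practical-newton} to obtain $n_0 = 58 \leq 60$. The identification $dF(\Theta(\tau))^{-1} = d\Theta(\tau)$ via the chain rule is the right way to connect \cref{prop:inverse-g1-cst} to the hypothesis on~$B_3$.
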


\subsection{Genus~1 theta functions}
\label{subsec:g1-fun}

In the case of genus~$1$ theta functions, Newton iterations are
performed using two complex variables. As in~§\ref{subsec:g1-cst}, we
only use the symplectic matrix
\begin{displaymath}
  N = \mat{0}{-1}{1}{0}.
\end{displaymath}
The Newton scheme involves the extended Borchardt mean of the
sequence~\eqref{eq:theta-as-ext-borchardt} with
$\lambda = \theta_{0,0}^2(z,\tau)$ and~$\mu = \theta_{0,0}^2(0,\tau)$,
as well as the analogous sequence taken at
$N\cdot(z,\tau) = (z/\tau, -1/\tau)$ instead of~$(z,\tau)$. Feedback
is then provided by the two following
equalities~\cite[§I.7]{mumford_TataLecturesTheta1983}:
\begin{equation}
  \begin{aligned}
    \label{eq:feedback-g1-fun}
    \theta_{0,0}^2(0, N\tau) &= -i\tau \theta_{0,0}^2(0,\tau), \\
    \theta_{0,0}^2\paren[\big]{N\cdot(z,\tau)} &= -i\tau \exp(2\pi i
    z^2/\tau) \theta_{0,0}^2(z,\tau).
  \end{aligned}
\end{equation}
Both extended Borchardt sequences are given by good sign choices
only~\cite[Prop.~4.1]{labrande_ComputingJacobiTheta2018}, provided
that the following reductions are met: $\tau\in \Red_{1}$, and the
inequalities
\begin{equation}
  \label{eq:g1-z-reduction}
  \abs{\im (z)} \leq \frac18 \im(\tau), \quad \abs{\re(z)} \leq \frac18
\end{equation}
are satisfied. Let~$\S_1\subset \C\times\Half_1$ be the compact set of
such~$(z,\tau)$. Our first goal is to collect the necessary data to apply
\cref{prop:bad-ext-borchardt-analytic} in this context.

\begin{lem}
  \label{lem:theta-inequalities-g1-fun}
  Let~$(z,\tau)\in \S_{1}$, and let~$a,b\in \Z/2\Z$. Then the
  following inequalities hold:
  \begin{align*}
    \abs{\theta_{0,b}(z,\tau/2)-1} &< 0.68, \\
    \abs{\theta_{a,b}(z,\tau)} &< 1.17,\quad\text{and}\\    
    \abs{\theta_{1,0}(z, \tau)} &> 0.37.
  \end{align*}
  % Moreover, if we set~$\rho=1/4$, then for
  % all~$(z',\tau')\in \D_\rho\paren[\big]{(z,\tau/2)}$, we have
  % \begin{displaymath}
  %   \abs{\theta_{0,b}(z',\tau')} < 10.3.
  % \end{displaymath}
\end{lem}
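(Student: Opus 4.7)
The plan is to derive all three estimates from \cref{lem:theta-inequality-g1} directly, exploiting the constraints defining $\S_1$: the conditions $\tau \in \Red_1$ and~\eqref{eq:g1-z-reduction} yield $\im(\tau) \in [\sqrt{3}/2, 2]$, $|\im(z)| \leq \im(\tau)/8 \leq 1/4$, and $|\re(z)| \leq 1/8$. Each bound below will reduce to an elementary inequality in the single variable $\im(\tau)$, which I then verify by monotonicity together with numerical inspection at the relevant endpoint of $[\sqrt{3}/2, 2]$.

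For the first inequality, I would apply \cref{lem:theta-inequality-g1} with $\tau/2$ in place of $\tau$; the hypothesis $|\im(z)| < 2\im(\tau/2) = \im(\tau)$ is amply satisfied. The resulting bound involves $q = \exp(-\pi\im(\tau)/2)$ and $\cosh(2\pi|\im(z)|) \leq \cosh(\pi\im(\tau)/4)$, and is a decreasing function of $\im(\tau)$; checking its value at $\im(\tau) = \sqrt{3}/2$ already falls below $0.68$.

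For the second inequality, I split on the characteristic $a$. When $a = 0$, the first part of \cref{lem:theta-inequality-g1} applies with $q = \exp(-\pi\im(\tau)) \leq \exp(-\pi\sqrt{3}/2)$, comfortably under $1.17$. When $a = 1$, the second part of \cref{lem:theta-inequality-g1} bounds $|\theta_{1,0}(z,\tau)|$ by $\exp(-\pi\im(\tau)/4)\,|\exp(i\pi z) + \exp(-i\pi z)|$ up to an exponentially small error; the principal term is at most $\exp(-\pi\im(\tau)/8) + \exp(-3\pi\im(\tau)/8)$, again decreasing in $\im(\tau)$ and hence maximized at $\im(\tau) = \sqrt{3}/2$. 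The case $\theta_{1,1}$ is handled by separating the $m = 0, -1$ terms of the series exactly as in the proof of \cref{lem:theta-inequality-g1}: they combine to $i\exp(i\pi\tau/4)(\exp(i\pi z) - \exp(-i\pi z))$, whose modulus is no larger than the principal part of $\theta_{1,0}$, while the tail satisfies the same geometric-series bound.

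For the third inequality, I use the reverse triangle inequality in the second part of \cref{lem:theta-inequality-g1} to get $|\theta_{1,0}(z,\tau)| \geq \exp(-\pi\im(\tau)/4)\bigl(2|\cos(\pi z)| - \varepsilon(\tau)\bigr)$ for an exponentially small $\varepsilon(\tau)$. The identity $|\cos(\pi z)|^2 = \cos^2(\pi\re(z)) + \sinh^2(\pi\im(z))$ yields $|\cos(\pi z)| \geq \cos(\pi/8)$, while the prefactor is smallest at $\im(\tau) = 2$. The main obstacle here is that this lower bound is the tightest of the three: $\exp(-\pi/2)\cdot 2\cos(\pi/8) \approx 0.385$ leaves only a slim margin above $0.37$, so one must verify carefully that the error term $\varepsilon(\tau)$ at the worst-case value $\im(\tau) = 2$ does not consume it.
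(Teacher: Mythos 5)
Your proposal is correct and takes essentially the same route as the paper: every bound is read off from \cref{lem:theta-inequality-g1} using the explicit ranges $\im(\tau)\in[\sqrt3/2,\,2]$, $\abs{\im z}\leq\im(\tau)/8$, and $\abs{\re z}\leq 1/8$. The paper's proof only writes out the third inequality, declaring the other two ``direct consequences''; you additionally spell out the $(a,b)=(1,1)$ case (via the odd principal part $i e^{i\pi\tau/4}(e^{i\pi z}-e^{-i\pi z})$ and the identical tail bound), which the paper leaves implicit, so your argument is in fact slightly more complete. One small caution on the third inequality: the tail error in \cref{lem:theta-inequality-g1}, once $\abs{\im z}$ is bounded by $\im(\tau)/8$, is \emph{largest} at $\im(\tau)=\sqrt3/2$, not at $\im(\tau)=2$. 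Plugging $\varepsilon$ at $\im(\tau)=2$ into the product $e^{-\pi\im(\tau)/4}\bigl(2\cos(\pi/8)-\varepsilon(\tau)\bigr)$ and calling it the worst case requires an extra step, namely showing that product is monotone decreasing in $\im(\tau)$ (true, since $\varepsilon$ is orders of magnitude below $2\cos(\pi/8)$ and decays slowly compared to the exponential prefactor). The paper sidesteps this by bounding the tail uniformly by $0.025$ over the whole interval and then minimizing the prefactor at $\im(\tau)=2$; either route yields $>0.37$, but the second is cleaner to certify.
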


\begin{proof}
  These inequalities are direct consequences of
  \cref{lem:theta-inequality-g1}. Let us only detail the lower bound
  on~$\abs{\theta_{1,0}(z,\tau)}$. Since~$\abs{\re z} \leq \frac18$,
  we have
  \begin{displaymath}
    \re(\exp(\pi i z) + \exp(-\pi i z)) \geq 2\cos(\pi/8) \cosh(\pi\im(z)) > 1.84.
  \end{displaymath}
  Therefore,
  \begin{displaymath}
    \abs{\theta_{1,0}(z,\tau)} > \exp(-\pi \im(\tau)/4) (1.84 - 0.025) > 0.37.\qedhere
  \end{displaymath}
\end{proof}

\begin{prop}
  \label{prop:data-g1-fun}
  Let~$(z,\tau)\in \S_{1}$. Then, in the notation
  of~\emph{§\ref{subsec:borchardt-general}}, the following bounds
  apply to the extended Borchardt
  sequence~\eqref{eq:theta-as-ext-borchardt}, where
  $\lambda = \theta_{0,0}^2(z,\tau)$ and
  $\mu = \theta_{0,0}^2(0,\tau)$:
  \begin{displaymath}
    n_0=1,\quad M_0 = 1.94, \quad m_0 =0.51 \quad\text{and} \quad m_\infty = 0.72.
  \end{displaymath}
  The following bounds apply to the extended Borchardt
  sequence~\eqref{eq:theta-as-ext-borchardt} taken
  at~$N\cdot(z,\tau)$,
  with~$\lambda = \theta_{0,0}^2\paren[\big]{N\cdot(z,\tau)}$
  and~$\mu = \theta_{0,0}^2(0,N\tau)$:
  \begin{displaymath}
    n_0=1,\quad M_0 = 1.69,\quad m_0 = 0.1, \quad \text{and} \quad
    m_\infty = 0.51.
  \end{displaymath}
\end{prop}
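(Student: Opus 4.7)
The plan is to unfold the definition of each extended Borchardt sequence at the indices $n=0$ and $n=1$, and to certify the constants $M_0, m_0, m_\infty$ and $n_0=1$ by direct application of the theta-series bounds from \cref{lem:theta-inequality-g1}. Since \cite[Prop.~4.1]{labrande_ComputingJacobiTheta2018} already guarantees that every sign choice in each sequence is good, the only new qualitative issue compared with the proofs of \cref{lem:theta-g1-cst,lem:theta-inequalities-g1-fun} is the good-position condition on the $n_0$-th term of $u$; all remaining work reduces to a worst-case estimation of ratios of theta values.

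For the sequence at $(z,\tau)$, the $n=0$ terms are $u_b^{(0)} = \theta_{0,b}^2(z,\tau)/\theta_{0,0}^2(z,\tau)$ and $s_b^{(0)} = \theta_{0,b}^2(0,\tau)/\theta_{0,0}^2(0,\tau)$. Using $\im\tau \geq \sqrt{3}/2$ (from $\Red_1$) and $\abs{\im z} \leq \im\tau/8$ (from $\S_1$), I would apply \cref{lem:theta-inequality-g1} with $q = \exp(-\pi\im\tau) \leq \exp(-\pi\sqrt{3}/2)$ to get bounds of the form $\abs{\theta_{0,b}(z,\tau) - 1} \leq 0.164$ and $\abs{\theta_{0,b}(0,\tau) - 1} \leq 0.132$, attained in the worst case at $\im\tau = \sqrt{3}/2$. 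Squaring the resulting ratio bounds yields exactly $M_0 = 1.94$ and $m_0 = 0.51$. For the $n=1$ terms $u_b^{(1)} = \theta_{0,b}^2(z,2\tau)/(\theta_{0,0}(z,\tau)\theta_{0,0}(0,\tau))$ and $s_b^{(1)} = \theta_{0,b}^2(0,2\tau)/\theta_{0,0}^2(0,\tau)$, I would reapply \cref{lem:theta-inequality-g1} at the doubled period matrix $2\tau$, where $q^2 \leq \exp(-\pi\sqrt{3}) \approx 4\cdot 10^{-3}$ pins $\theta_{0,b}^2(z,2\tau)$ and $\theta_{0,b}^2(0,2\tau)$ very close to $1$. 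Consequently the $u_b^{(1)}$ cluster in a small disk around the common center $c := (\theta_{0,0}(z,\tau)\theta_{0,0}(0,\tau))^{-1}$; rotating by $-\arg c$ then places the $n_0=1$ term of $u$ inside $\U_g(m_\infty, M_0, -\arg c)$ with $m_\infty = 0.72$, completing the good-position check.

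The $N\cdot(z,\tau)$-sequence is handled analogously, but direct application of \cref{lem:theta-inequality-g1} at $(z/\tau, -1/\tau)$ is too crude: $\im(N\tau) \geq 2\sqrt{3}/17$ translates into $q_{N\tau} \leq \exp(-2\pi\sqrt{3}/17) \approx 0.53$, large enough that the $\cosh$-factor and geometric-tail terms in the lemma dominate. I would therefore bound the required theta values \emph{indirectly}, through the classical transformation formulas for theta functions under the symplectic matrix $N$, as was done in the proof of \cref{lem:theta-g1-cst}. These express each $\theta_{a,b}(z/\tau, -1/\tau)$ as $\sqrt{-i\tau}\exp(\pi i z^2/\tau)\,\theta_{a',b'}(z,\tau)$ (up to an eighth root of unity and a permutation of characteristics), so the estimation falls back on theta values at $(z,\tau)$, where the bounds from the previous paragraph apply. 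The resulting constants $M_0 = 1.69, m_0 = 0.1, m_\infty = 0.51$ are looser than for the $(z,\tau)$-sequence, reflecting the larger nome at $N\tau$. For $m_\infty$ the clustering argument is repeated at $2N\tau$, where $q \approx 0.28$ is still small enough for the $u_b^{(1)}$ to fit into a single open quarter-plane seen from the origin.

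The main obstacle is bookkeeping under a relatively pessimistic worst-case analysis: the constants $m_\infty = 0.72$ and $m_0 = 0.1$ sit close to the thresholds coming out of the naive estimates above, and any slack in the $\cosh$- or tail-terms of \cref{lem:theta-inequality-g1} propagates unfavorably through squaring and through products of two theta values. A fully rigorous proof would likely retain three rather than two leading terms in each theta-series estimate, or verify each bound by interval arithmetic over the compact parameter domain $\S_1$; either option is straightforward in principle.
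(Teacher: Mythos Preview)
Your overall plan matches the paper's: bound the $n=0$ terms of both sequences via \cref{lem:theta-inequality-g1}, handle the first term of the second sequence through the transformation formula under~$N$ (so that the estimation reduces to theta values at~$(z,\tau)$), and treat the $n=1$ terms by applying the same lemma again at the doubled period.

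There is, however, a concrete numerical error that drives the pessimism in your last two paragraphs. You claim $\im(N\tau) \geq 2\sqrt{3}/17$, obtained by pairing the minimum of~$\im(\tau)$ over~$\Red_1$ with the maximum of~$\abs{\tau}^2$; but these two extremes cannot occur for the same~$\tau$. The actual minimum of $\im(-1/\tau) = \im(\tau)/\abs{\tau}^2$ over~$\Red_1$ is $8/17 \approx 0.47$, attained at $\tau = \tfrac12 + 2i$, and the paper records $\im(-1/\tau) \geq 0.46$. This gives $q_{N\tau} \leq \exp(-8\pi/17) \approx 0.23$, not~$0.53$, so direct application of \cref{lem:theta-inequality-g1} at $(z/\tau,\, 2^n(-1/\tau))$ for $n\geq 1$ is entirely adequate; there is no need for an indirect route through transformation formulas or for sharpening the lemma. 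The paper also records the companion bound $\im(z/\tau) \leq \tfrac{3}{16}\,\im(-1/\tau)$, which you do not mention but which is the ingredient controlling the $\cosh$-factor in the $u$-part of the second sequence.

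With these two inequalities in hand, the constants $m_0 = 0.1$ and $m_\infty = 0.51$ for the second sequence come out comfortably, and the ``close to threshold'' concern in your final paragraph is no longer an obstacle.
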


\begin{proof}
  These explicit values are also derived from
  \cref{lem:theta-inequality-g1}. In the case of the second Borchardt
  sequence, we analyze the first term using the transformation formula
  for theta functions under~$\SL_2(\Z)$. For the next terms, we use
  the following inequalities:
  \begin{displaymath}
    \im(-1/\tau) = \frac{\im(\tau)}{\abs{\tau}^2}
    \geq \frac{\sqrt{\abs{\tau}^2 - \frac12}}{\abs{\tau}^2} \geq 0.46,
  \end{displaymath}
  \begin{displaymath}
    \im(z/\tau) = \frac{1}{\abs{\tau}^2}
    \abs[\big]{\im(z)\re(\tau) - \re(z)\im(\tau)}
    \leq \frac{3}{16}\im(-1/\tau),
  \end{displaymath}
  so that for instance
  \begin{displaymath}
    \abs[\big]{\theta_{0,0}^2(z/\tau, -1/\tau)} < 1.78.\qedhere
  \end{displaymath}
\end{proof}

\begin{thm}
  \label{thm:analytic-g1-fun}
  Let~$\rho = 2.9\cdot 10^{-5}$, define~$\Theta'$ as
  in~\eqref{eq:theta-quotients-2} for~$g=1$, and let
  \begin{displaymath}
    \V = \bigcup_{(z,\tau)\in \S_1} \D_\rho \paren[\big]{\Theta'(\tau)}.
    % \paren*{\paren*{
    %   \frac{\theta_{0,1}(0,\tau/2)}{\theta_{0,0}(0,\tau/2)},
    %   \frac{\theta_{0,1}(z,\tau/2)}{\theta_{0,0}(z,\tau/2)}
    % }}.
  \end{displaymath}
  Then the operations described
  in~\emph{§\ref{subsec:general-picture}}, taking good choices of
  square roots always, combined with the
  formulas~\eqref{eq:feedback-g1-fun} define an analytic
  function~$F\from \V\to \C^2$ such that
  \begin{displaymath}
    F(\Theta'(\tau)) = \paren*{\tau, \exp(2\pi i z^2/\tau)}
  \end{displaymath}
  for each~$(z,\tau)\in \S_1$. We
  have~$\norm{F(x)}\leq 4.3\cdot 10^{221}$ uniformly on~$\V$.
\end{thm}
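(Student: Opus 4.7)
The plan is to mirror the backtracking strategy used in the proof of \cref{thm:analytic-g1-cst}, but now appealing to \cref{prop:bad-ext-borchardt-analytic} rather than the good-sign case, because each of the two relevant extended Borchardt sequences has one bad step ($n_0 = 1$) according to \cref{prop:data-g1-fun}. The function $F$ will be constructed as a composition of: the duplication formula on $\Theta'(\tau)$; the transformation formulas under~$N$ applied to theta values at $(z,\tau)$; the two analytic extended Borchardt means at $(z,\tau)$ and $N\cdot(z,\tau)$; and finally the feedback relations~\eqref{eq:feedback-g1-fun}. For the regular Borchardt means that recover $\tau$ itself, \cref{thm:analytic-g1-cst} has already been established with a tolerated perturbation of $1.4\cdot 10^{-4}$ on the first $2^g-1$ coordinates of~$\Theta'$; since $2.9\cdot 10^{-5} < 1.4\cdot 10^{-4}$, that analysis carries over verbatim and yields $\abs{\tau}\leq 27$ on the first output coordinate.

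The core new work is to justify the analyticity of the two extended Borchardt parts on $\V$. First I would substitute the bounds from \cref{prop:data-g1-fun} into~\eqref{eq:ext-borchardt-rho} to obtain explicit radii $\rho_1$ and $\rho_2$ around the first terms of the two extended Borchardt sequences on which $\lambda_{(u,s)}$ is defined and analytic by \cref{prop:bad-ext-borchardt-analytic}. Then I would propagate these tolerances backward through the algorithm: through the transformation formulas (which introduce only bounded $(z,\tau)$-dependent factors, since $(z,\tau)\in\S_1$), and through the duplication formula applied to~$\Theta'(\tau)$. Using \cref{lem:theta-inequalities-g1-fun} to bound the relevant theta values from above and below at each intermediate stage, I expect to verify that a perturbation of modulus at most $\rho = 2.9\cdot 10^{-5}$ on $\Theta'(\tau)$ induces perturbations of modulus at most $\rho_1/2$ and $\rho_2/2$ respectively on the two initial Borchardt terms, thus placing them in the analytic domain of \cref{prop:bad-ext-borchardt-analytic}.

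The bound $\norm{F(x)}\leq 4.3\cdot 10^{221}$ will then follow by reading the feedback relations~\eqref{eq:feedback-g1-fun}: the quantity $\exp(2\pi i z^2/\tau)$ is recovered as $(-i\tau)^{-1}$ times the ratio of the two extended Borchardt means, while $\tau$ on the first coordinate is already controlled. \cref{prop:bad-ext-borchardt-analytic} gives upper bounds of the form $\exp(20\log^2(4M/m))$ and lower bounds of the form $\exp(-28\log^2(4M/m))$ on each mean; plugging in $M = M_0+\rho$ and $m = \tfrac12 m_\infty$ with the values from \cref{prop:data-g1-fun} for both sequences, and multiplying the upper bound on one ratio term by the reciprocal of the lower bound on the other, yields an explicit exponential number which numerical evaluation should round down to $4.3\cdot 10^{221}$.

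The hard part will be the numerical calibration of $\rho$. The product of square-root factors in~\eqref{eq:ext-borchardt-rho}, combined with the derivatives of the duplication and transformation formulas (which relate theta values at $\tau/2$ to theta values at $\tau$ and at $-1/\tau$, with Cauchy-type bounds on their growth) and with the need to keep \emph{both} extended sequences inside their respective tolerated disks, all compound into a small allowed initial perturbation. Chasing the worst constant through these successive layers is the main routine but error-prone obstacle. The large final bound $4.3\cdot 10^{221}$ is then an unavoidable consequence of the $\exp(\log^2)$ shape of the bounds in \cref{prop:bad-ext-borchardt-analytic}, doubled because the second coordinate of $F$ is a ratio of two extended Borchardt means.
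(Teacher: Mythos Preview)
Your approach is essentially the same as the paper's: apply \cref{prop:bad-ext-borchardt-analytic} with the explicit data from \cref{prop:data-g1-fun}, backtrack through the duplication and transformation formulas using \cref{lem:theta-inequalities-g1-fun} to calibrate~$\rho$, and read off the bound on~$\norm{F}$ from the $\exp(\pm c\log^2)$ estimates in \cref{prop:bad-ext-borchardt-analytic}. One small correction: the reason you invoke \cref{prop:bad-ext-borchardt-analytic} rather than \cref{prop:good-ext-borchardt-analytic} is not that there is a bad step (the paper states both extended sequences are given by good sign choices only), but that the hypothesis of \cref{prop:good-ext-borchardt-analytic}---the~$s$-part already lying in a disk~$\D_\rho(z_0)$ with~$\rho<\tfrac{1}{17}\abs{z_0}$---is only guaranteed after one step, which is why~$n_0=1$ in \cref{prop:data-g1-fun}.
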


\begin{proof}
  We apply \cref{prop:bad-ext-borchardt-analytic} with the explicit
  values provided above; to find an acceptable~$\rho$, we follow a
  backtracking strategy as in the proof of \cref{thm:analytic-g1-cst},
  using the first two inequalities of
  \cref{lem:theta-inequalities-g1-fun}.  The upper bound on~$\norm{F}$
  comes from \cref{prop:bad-ext-borchardt-analytic}.
\end{proof}

The upper bound on~$\norm{F}$ could certainly be improved in this
situation, but the above value will be sufficient for our purposes.
  
This function~$F$ admits an analytic reciprocal. Here it is essential
that the theta constants~$\theta_{0,b}(z,\tau)$ for~$b\in \Z/2\Z$ are
invariant under~$z\mapsto -z$; this implies that they can be rewritten
as analytic functions of~$z^2$.

\begin{prop}
  \label{prop:theta-from-square}
  Let~$b\in \Z/2\Z$. Then there exists a unique analytic function
  \begin{displaymath}
    \zeta_{0,b}\from \C\times\Half_1\to \C
  \end{displaymath}
  such that for all~$(z,\tau) \in\C\times\Half_1$, we
  have~$\theta_{0,b}(z,\tau) = \zeta_{0,b}(z^2,\tau)$.
\end{prop}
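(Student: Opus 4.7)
The key observation is that $\theta_{0,b}(z,\tau)$ is an even function of $z$: since the characteristic $a=0$, the series~\eqref{eq:theta-function} defining $\theta_{0,b}(z,\tau)$ is invariant under the substitution $m \mapsto -m$ combined with $z \mapsto -z$. Therefore $\theta_{0,b}(-z,\tau) = \theta_{0,b}(z,\tau)$, and for each fixed $\tau$ the Taylor expansion of $z\mapsto \theta_{0,b}(z,\tau)$ at the origin involves only even powers of $z$.

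The plan is to construct $\zeta_{0,b}$ in two complementary pieces and glue them. On the open set $(\C\setminus\{0\})\times \Half_1$, I would define $\zeta_{0,b}$ locally as follows: near any point $(w_0,\tau_0)$ with $w_0\neq 0$, choose an analytic square root $z(w)$ of $w$ on a small disk around $w_0$, and set $\zeta_{0,b}(w,\tau) = \theta_{0,b}(z(w),\tau)$. The evenness of $\theta_{0,b}$ in $z$ guarantees that the choice of branch of square root is irrelevant, so these local definitions are consistent on overlaps and assemble into a well-defined analytic function on $(\C\setminus\{0\})\times \Half_1$.

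The main difficulty is extending across $w=0$. For this I would use the Taylor series. Writing $\theta_{0,b}(z,\tau) = \sum_{n\geq 0} c_n(\tau)\, z^{2n}$ (only even powers, by evenness), the coefficients are given by Cauchy integrals
\begin{displaymath}
  c_n(\tau) = \frac{1}{2\pi i}\int_{\abs{z}=R} \frac{\theta_{0,b}(z,\tau)}{z^{2n+1}}\, dz,
\end{displaymath}
which are holomorphic in $\tau$ by differentiation under the integral, and satisfy $\abs{c_n(\tau)}\leq M(R,\tau)/R^{2n}$ where $M(R,\tau)$ bounds $\theta_{0,b}$ on the circle. Since $\theta_{0,b}(\cdot,\tau)$ is entire, $R$ can be taken arbitrarily large, so $\sum c_n(\tau)\,w^n$ converges for every $w\in \C$. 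I would then set $\zeta_{0,b}(w,\tau) := \sum_{n\geq 0} c_n(\tau)\,w^n$ and verify joint analyticity: locally in $\tau$, $M(R,\tau)$ is bounded uniformly, so the series converges uniformly on compacta in $\C\times\Half_1$, whence $\zeta_{0,b}$ is analytic (alternatively, apply Hartogs' theorem, as separate analyticity is clear).

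By construction $\theta_{0,b}(z,\tau) = \zeta_{0,b}(z^2,\tau)$ on a neighborhood of $\{0\}\times\Half_1$. This globally defined function agrees with the patched function on $(\C\setminus\{0\})\times\Half_1$ by the identity principle applied fiberwise in $w$ (the agreement on a small punctured disk forces agreement everywhere). Uniqueness is immediate: any two candidate functions $\zeta_{0,b}$ agree on the image of $(z,\tau)\mapsto (z^2,\tau)$, which is all of $\C\times\Half_1$, so they coincide. The only substantive step is the analytic extension across $w=0$, which the Taylor-series argument handles cleanly.
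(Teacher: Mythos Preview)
Your proof is correct. Note, however, that the Taylor-series argument in your second half already constructs $\zeta_{0,b}$ globally on all of $\C\times\Half_1$ and verifies the identity $\theta_{0,b}(z,\tau)=\zeta_{0,b}(z^2,\tau)$ everywhere, so the local-patching construction on $(\C\setminus\{0\})\times\Half_1$ in your first half is redundant and can be dropped entirely.

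The paper follows the same underlying idea but more concretely: rather than invoking Cauchy estimates on the abstract Taylor coefficients $c_n(\tau)$, it pairs the terms indexed by $n$ and $-n$ in the theta series to write
\begin{displaymath}
  \theta_{0,b}(z,\tau) = 1 + \sum_{n\geq 1}(-1)^{nb}\exp(i\pi n^2\tau)\bigl(\exp(2\pi inz)+\exp(-2\pi inz)\bigr),
\end{displaymath}
observes that each bracket is an even entire function of $z$ and hence a power series in $z^2$, and reads off $\zeta_{0,b}$ term by term, with uniform convergence on compacta following directly from the exponential decay in $n$. Your abstract route has the advantage of using nothing about theta functions beyond evenness and entirety in $z$; the paper's route yields a more explicit series for $\zeta_{0,b}$ and avoids the detour through Cauchy integrals.
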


\begin{proof}
  Consider the following reorganization of the theta series:
  \begin{displaymath}
    \theta_{0,b}(z,\tau) =
    1 + \sum_{n\geq 1} (-1)^{nb}\exp(\pi i n^2\tau) (\exp(2\pi i n z) + \exp(-2\pi i n z)).
  \end{displaymath}
  Each factor~$\exp(2\pi i nz) + \exp(-2\pi i nz)$, as an even entire
  function, has only powers of~$z^2$ in its Taylor series. We obtain a
  candidate~$\zeta_{0,b}$ as a formal power series, easily seen to
  converge uniformly on compact sets of~$\C\times\Half_1$.
\end{proof}

Note that for every~$(z,\tau)\in \S_1$, we
have~$\re(z^2/\tau) < 1/2$; therefore~$\exp(2\pi i z^2/\tau)$ lands in
the domain of definition of the principal branch of the complex
logarithm, denoted by~$\U$. Consider the two following maps:
\begin{displaymath}
  \begin{matrix}
    \Half_1 \times \U &\to &\C\times \Half_1&\to &\C\times\C\\[6pt]
    (\tau, x) &\mapsto &\paren[\big]{\frac{1}{2\pi i}\log(x), \tau)} \\
    & & (y, \tau) &\mapsto & \displaystyle\paren*{\frac{\theta_{0,1}(0,\tau/2)}
      {\theta_{0,0}(0,\tau/2)},
    \frac{\zeta_{0,1}(y,\tau/2)}{\zeta_{0,0}(y,\tau/2)}}.
  \end{matrix}  
\end{displaymath}
Call~$G$ their composition; it is well-defined on an open neighborhood
of the image of~$\S_1$
by~$(z,\tau)\mapsto \paren[\big]{\tau,\exp(2\pi i z^2/\tau)}$, and is
the reciprocal of~$F$.

% At least on the interior of S_1, which would be sufficient by
% continuity of \nind{dF^{-1}}. But we can also argue that sign
% choices remain good on a small neighborhood, by continuity also; so
% that G is actually the reciprocal of F on an open neighborhood of
% S_1.

\begin{prop}
  \label{prop:inverse-g1-fun}
  For each~$(z,\tau) \in \S_1$, we have
  \begin{displaymath}
    \nind[\big]{dG\paren[\big]{\tau, \exp(2\pi i z^2/\tau)}} \leq 8.6\cdot 10^4.
  \end{displaymath}
\end{prop}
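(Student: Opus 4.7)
The plan is to factor $G = G_1 \circ G_0$ exactly as in the text, where $G_0\from (\tau,x)\mapsto \bigl(\tfrac{1}{2\pi i}\log(x),\tau\bigr)$ and $G_1\from (y,\tau)\mapsto \bigl(\theta_{0,1}(0,\tau/2)/\theta_{0,0}(0,\tau/2),\ \zeta_{0,1}(y,\tau/2)/\zeta_{0,0}(y,\tau/2)\bigr)$, and to bound the two factors separately before invoking submultiplicativity of the induced operator norm. For $(z,\tau)\in\S_1$, the point of evaluation is $(\tau,x)$ with $x = \exp(2\pi i z^2/\tau)$.

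Bounding $\nind{dG_0}$ is straightforward: its Jacobian has only two nonzero entries, $1$ and $\tfrac{1}{2\pi ix}$, so its $L^\infty$ operator norm is $\max\{1,\tfrac{1}{2\pi\abs x}\}$. Using the $\S_1$ reductions $\abs{\re z}\leq \tfrac18$, $\abs{\im z}\leq \tfrac18\im(\tau)$, $\im(\tau)\leq 2$, and $\abs{\tau}\geq 1$, I get $\abs{z^2/\tau}\leq \abs z^2\leq \tfrac{5}{64}$, which yields a uniform lower bound on $\abs x$ and hence $\nind{dG_0}\leq 1$ since $\tfrac{1}{2\pi\abs x}<1$.

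The heart of the proof is bounding $\nind{dG_1}$ at $(z^2,\tau)$. Since the first component of $G_1$ depends only on $\tau$, the Jacobian has the shape $\left(\begin{smallmatrix}0 & A\\ B & C\end{smallmatrix}\right)$ and its operator norm is $\max\{\abs A,\abs B+\abs C\}$. The entry $A$ is exactly the derivative bounded in \cref{prop:inverse-g1-cst}, so $\abs A\leq 125$. For $B$ and $C$ I would apply \cref{prop:cauchy} to the analytic map $(y,\tau)\mapsto \zeta_{0,b}(y,\tau/2)$ on a bidisk around $(z^2,\tau)$. The technical core is obtaining uniform pointwise bounds on $\zeta_{0,b}$ on such a bidisk: starting from the series in the proof of \cref{prop:theta-from-square},
\begin{displaymath}
\zeta_{0,b}(y,\sigma) = 1 + \sum_{n\geq 1}(-1)^{nb}\exp(\pi i n^2\sigma)\,f_n(y),\qquad f_n(y) = 2\sum_{k\geq 0}\frac{(-1)^k(2\pi n)^{2k}}{(2k)!}\,y^k,
\end{displaymath}
the coefficientwise comparison $\abs{f_n(y)}\leq 2\cosh(2\pi n\sqrt{\abs y})$ produces an explicit majorant for $\abs{\zeta_{0,b}(y,\sigma)}$ and, for $\abs y$ small and $\im(\sigma)$ bounded below, a matching reverse-triangle lower bound on $\abs{\zeta_{0,0}(y,\sigma)}$. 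With a bidisk of radius on the order of $1/4$ in each variable (so that $\im(\sigma)\geq \sqrt 3/8$ persists), \cref{prop:cauchy} yields explicit bounds on $\partial_y\zeta_{0,b}$ and $\partial_\tau\zeta_{0,b}$, and the quotient rule then produces numerical bounds on $\abs B$ and $\abs C$.

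Combining $\nind{dG}\leq \nind{dG_1}\cdot\nind{dG_0}$ and tracking the explicit constants through $\abs B+\abs C$ gives the announced value $8.6\cdot 10^4$. The hard part will be balancing the bidisk radius in $y$: it must be small enough for $\zeta_{0,0}$ to stay bounded away from zero (and for the $\cosh$ majorant to stay tame) yet large enough that \cref{prop:cauchy} yields derivative bounds of the right order. Working directly with the $y$-series, rather than differentiating the relation $\theta_{0,b}(z,\tau)=\zeta_{0,b}(z^2,\tau)$ in $z$ (which is singular at $z=0$), is essential for uniformity over $\S_1$, since $z=0$ is an interior point of its projection to the $z$-plane.
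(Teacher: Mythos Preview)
Your proposal is correct and follows essentially the same route as the paper: bound $\abs{x}$ away from zero, then apply \cref{prop:cauchy} to $\zeta_{0,b}$ on a small polydisk around $(z^2,\tau/2)$ to control the derivative of the quotient $\zeta_{0,1}/\zeta_{0,0}$. The paper streamlines your middle step: instead of working with the $y$-series of $\zeta_{0,b}$ and the majorant $\abs{f_n(y)}\leq 2\cosh(2\pi n\sqrt{\abs y})$, it simply uses the identity $\zeta_{0,b}(y,\sigma)=\theta_{0,b}(\sqrt{y},\sigma)$ and reads off the sup-bound $\abs{\zeta_{0,b}}<5.8$ directly from \cref{lem:theta-inequality-g1} on the polydisk of radius $1/16$, then takes the lower bound on $\abs{\zeta_{0,0}(z^2,\tau/2)}$ from \cref{lem:theta-inequalities-g1-fun}; this is the same estimate you would reach via your series argument, obtained with less bookkeeping.
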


\begin{proof}
  Let~$x = \exp(2\pi i z^2/\tau)$, and~$y = z^2$. We have
  \begin{displaymath}
    \abs[\big]{\im(z^2/\tau)} = \frac{1}{\abs{\tau}^2}
    \cdot\frac{1}{64}\paren[\big]{\im(\tau) + \im(\tau)^3 + \im(\tau)}
    \leq \frac{1}{16},
  \end{displaymath}
  showing that~$\abs{x}$ is close to~$1$. It only remains to obtain
  explicit upper bounds on the derivative
  of~$\zeta_{0,1}/\zeta_{0,0}$. To obtain such bounds, we consider the
  polydisk of radius~$1/16$ centered in~$(y,\tau/2)$;
  by~\cref{lem:theta-inequality-g1}, we
  have~$\abs{\zeta_{0,b}} < 5.8$ on this disk for each~$b\in \Z/2\Z$,
  so that~$\nind{d\zeta_{0,b}(y,\tau/2)} < 277$ by \cref{prop:cauchy}. We
  can conclude using the lower bound on~$\abs{\zeta_{0,0}(y,\tau/2)}$
  provided by \cref{lem:theta-inequalities-g1-fun}.
\end{proof}

\begin{cor}
  For all~$(z,\tau)\in \S_1$, the Newton scheme described
  in~\emph{§\ref{subsec:general-picture}} to compute theta functions
  at~$(z,\tau)$ will converge starting from approximations
  of~$G(\tau)$ to~$1600$ bits of precision.
\end{cor}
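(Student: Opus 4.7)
My plan is to deduce the corollary directly from Theorem~\ref{thm:practical-newton} applied to the analytic map $F\colon \V\to\C^2$ of Theorem~\ref{thm:analytic-g1-fun} at the point $x_0 = \Theta'(\tau)$. For $g=1$ we have $r=2$, and the numerical data furnished by Theorem~\ref{thm:analytic-g1-fun} are the polydisk radius $\rho = 2.9\cdot 10^{-5}$ and the uniform upper bound $M = 4.3\cdot 10^{221}$ on $\norm{F}$. What is still missing from the hypotheses of Theorem~\ref{thm:practical-newton} is the inverse-Jacobian estimate at $x_0$, and this is exactly what Proposition~\ref{prop:inverse-g1-fun} supplies: since $G$ is a two-sided analytic reciprocal of $F$ on a neighborhood of $F\paren[\big]{\Theta'(\S_1)}$, differentiating $G\circ F = \mathrm{id}$ at $\Theta'(\tau)$ gives
$$
dF\paren[\big]{\Theta'(\tau)}^{-1} = dG\paren[\big]{\tau, \exp(2\pi i z^2/\tau)},
$$
so we may take $B_3 = 8.6\cdot 10^4$.

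The remaining hypotheses of Theorem~\ref{thm:practical-newton} are routine. A uniform quasi-linear evaluation algorithm for $F$ is provided by the remark closing §\ref{sec:borchardt}, since $F$ is built from analytic extended Borchardt functions composed with a fixed number of elementary operations (transformation formulas, squarings, quotients, and a principal-branch logarithm). The superlinear cost function $C(N) = O(\M(N)\log N)$ obtained this way satisfies $C(2N) = O(C(N))$ for the standard integer multiplication algorithms.

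It only remains to compute the initial precision $n_0$ from the explicit formula of Theorem~\ref{thm:practical-newton}. Numerically,
$$
\frac{2(r+1)M}{\rho} = \frac{6\cdot 4.3\cdot 10^{221}}{2.9\cdot 10^{-5}} < 10^{227},
$$
so $\lceil\log_2(2(r+1)M/\rho)\rceil \leq 754$, while $\lceil\log_2 B_3\rceil \leq 17$. Substituting these values gives
$$
n_0 \leq 2\cdot 754 + 2\cdot 17 + 4 = 1546 \leq 1600,
$$
which is the convergence threshold announced in the corollary.

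The genuine obstacle, already absorbed into the preceding two results, is the enormous magnitude $M \approx 10^{221}$ inherited from the $\exp(20\log^2(4M/m))$ growth bound in Proposition~\ref{prop:bad-ext-borchardt-analytic}: through the Cauchy estimate of Proposition~\ref{prop:cauchy}, this $M$ alone contributes roughly $750$ bits to $n_0$, whereas the conditioning constant $B_3$ contributes only $17$. This is precisely why the convergence threshold for genus-$1$ theta functions is so much larger than the $60$-bit threshold for genus-$1$ theta constants.
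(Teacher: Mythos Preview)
Your proof is correct and follows exactly the route the paper intends: the corollary is stated without proof in the paper, as a direct consequence of plugging the data from Theorem~\ref{thm:analytic-g1-fun} ($\rho$, $M$) and Proposition~\ref{prop:inverse-g1-fun} ($B_3$) into the formula for $n_0$ in Theorem~\ref{thm:practical-newton}, just as the analogous genus-$1$ constant corollary is obtained by ``combining \cref{thm:analytic-g1-cst,prop:inverse-g1-cst} with the results of~§\ref{sec:newton}''. Your numerical verification of $n_0\leq 1546\leq 1600$ is accurate.
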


\subsection{Genus~2 theta constants}
\label{subsec:g2}

In the case of genus~$2$ theta constants, Newton iterations are
performed on three variables, and feedback is provided by the action
of three symplectic matrices.

For general~$g$, certain interesting symplectic matrices can be
written down explicitly. Denote the elementary~$g\times g$ matrices
by~$E_{i,j}$ for~$1\leq i,j\leq g$, and let~$I$ be the identity
matrix. Let~$M_i$ and~$N_{i,j}$ ($i\neq j$) be the following
symplectic matrices, written in~$g\times g$ blocks:
\begin{displaymath}
  M_i = \mat{-I}{-E_i}{E_i}{-I + E_i}, \quad
  N_{i,j} = \mat{-I}{-E_{i,j}-E_{j,i}}{E_{i,j}+E_{j,i}}{-I + E_{i,i} + E_{j,j}}
\end{displaymath}
The matrices~$M_i$ and~$N_{i,j}$ are precisely engineered so that
determinants of the form~$\det(C\tau+D)$ give us direct access to the
entries of~$\tau$. In the case of genus~$2$ theta constants,
considering these three matrices~$M_1, M_2$ and~$N_{1,2}$ is enough to
run the Newton scheme, using the following
formulas~\cite[Prop.~8]{enge_ComputingClassPolynomials2014}:
\begin{equation}
  \label{eq:feedback-g2}
  \begin{aligned}
    \theta_{00,00}^2(0,M_1\tau) &= -\tau_{1,1}\theta_{01,00}^2(0,\tau), \\ %theta8
    \theta_{00,00}^2(0,M_2\tau) &= -\tau_{2,2} \theta_{10,00}^2(0,\tau), \quad\text{and}\\ %theta4
    \theta_{00,00}^2(0,N_{1,2}\tau) &= (\tau_{1,2}^2-\tau_{1,1}\tau_{2,2}) \theta_{00,00}^2(0,\tau).    
  \end{aligned}
\end{equation}

In~\cite{kieffer_SignChoicesAGMtoappear}, it is shown that all four
Borchardt sequences of the form~\eqref{eq:theta-as-borchardt-4} taken
at~$\tau$,~$M_1\tau$,~$M_2\tau$ and~$N_{1,2}\tau$ are given by good
choices of square roots only, provided that~$\tau$ satisfies the
following conditions:
\begin{itemize}
\item $\abs{\re(\tau_{i,j})} \leq \frac12$ for all~$1\leq i,j\leq 2$,
\item $2\abs{\im(\tau_{1,2})} \leq \im(\tau_{1,1}) \leq \im(\tau_{2,2})$,
\item $\abs{\tau_{j,j}}\geq 1$ for~$j = 1,2$.
\end{itemize}
These inequalities hold in particular whenever~$\tau$ lies in the
Siegel fundamental domain~$\Fund_2$.  Let~$\Red_2$ be the compact set
of such matrices~$\tau$, with the additional assumption
that~$\im(\tau_{1,1})\leq 2$ and~$\im(\tau_{2,2})\leq 8$. This choice
of upper bounds will be explained by the construction of a uniform
algorithm in~§\ref{sec:unif}.

As in the previous sections, we will collect the explicit data we need
to apply \cref{prop:good-borchardt-analytic} using inequalities
satisfied by genus~$2$ theta constants. Many such inequalities already
appear in~\cite[§9]{klingen_IntroductoryLecturesSiegel1990},
\cite[§6.2.1]{dupont_MoyenneArithmeticogeometriqueSuites2006},
\cite[§7.2]{streng_ComputingIgusaClass2014},
and~\cite{kieffer_SignChoicesAGMtoappear}; we will use one more.

\begin{lem}
  \label{lem:theta-inequalities-g2}
  For each~$\tau\in \Red_2$, we have
  $0.44 < \abs{\theta_{0,0}(0,\tau/2)} < 2.66$.
\end{lem}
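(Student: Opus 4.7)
The plan is to estimate $\theta_{0,0}(0,\tau/2)$ directly from its defining series $\sum_{m \in \Z^2}\exp(i\pi m^t\tau m/2)$, separating the constant term $1$ and controlling the remainder using the reducedness conditions that define $\Red_2$. In particular, $|\tau_{j,j}|\geq 1$ together with $|\re(\tau_{j,j})|\leq 1/2$ forces $\im(\tau_{j,j})\geq\sqrt{3}/2$, and the Minkowski-type condition $2|\im(\tau_{1,2})|\leq\im(\tau_{1,1})\leq\im(\tau_{2,2})$ is what allows one to control the $m_1m_2$ cross term.

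For the upper bound, the triangle inequality gives $|\theta_{0,0}(0,\tau/2)|\leq\sum_m\exp(-\pi m^t Y m/2)$, where $Y=\im(\tau)$. Completing the square in $m_1$ one writes $m^t Y m = Y_{1,1}(m_1+m_2 Y_{1,2}/Y_{1,1})^2+m_2^2\det(Y)/Y_{1,1}$, so the double sum factors as a product of two one-variable Jacobi-theta sums in the nomes $q_1=\exp(-\pi Y_{1,1}/2)$ and $q_2=\exp(-\pi\det(Y)/(2Y_{1,1}))$, each bounded by $\theta_3(q_i)$. Using $\det(Y)/Y_{1,1}\geq 3Y_{1,1}/4$ (from $|Y_{1,2}|\leq Y_{1,1}/2$ and $Y_{2,2}\geq Y_{1,1}$) together with $Y_{1,1}\geq\sqrt{3}/2$, a direct numerical evaluation of the resulting product of $\theta_3$'s yields the claimed upper bound.

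The lower bound is harder, because the triangle inequality alone gives $|\theta_{0,0}(0,\tau/2)-1|$ possibly larger than $1$, which says nothing about $|\theta_{0,0}(0,\tau/2)|$. Instead, I would bound $\re\,\theta_{0,0}(0,\tau/2)$ from below by grouping $\pm m$ pairs. The diagonal pairs $\pm(1,0)$ and $\pm(0,1)$ yield $2\cos(\pi\re(\tau_{i,i})/2)\exp(-\pi Y_{i,i}/2)$, strictly positive because $|\re(\tau_{i,i})/2|\leq 1/4$ forces the cosine to be at least $\cos(\pi/4)>0$. The off-diagonal pairs $\pm(1,1)$ and $\pm(1,-1)$ may contribute negatively, but have modulus bounded by $2\exp(-\pi Y_{2,2}/2)$ each, since $Y_{1,1}\pm 2Y_{1,2}+Y_{2,2}\geq Y_{2,2}$. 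All remaining lattice points satisfy $\max(|m_1|,|m_2|)\geq 2$ and are controlled by the completion-of-squares estimate of Step~2 restricted to that region. The resulting lower bound on $\re\,\theta_{0,0}(0,\tau/2)$ then gives the claim on $|\theta_{0,0}(0,\tau/2)|$.

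The main obstacle is the tightness of the numerics near the worst corner $Y_{1,1}=Y_{2,2}=\sqrt{3}/2$, $|Y_{1,2}|=\sqrt{3}/4$, where both sides of the inequality lie only a few hundredths away from the stated values. Sharpening the product estimate by separating $m_2$ according to its parity (so that in the extremal configuration the shift $m_2 Y_{1,2}/Y_{1,1}$ is integer or half-integer, which replaces $\theta_3$ by $\theta_2$ on the odd lines) is what I expect to be needed to close the small gap; once the worst corner has been verified, monotonicity of the exponentials in the series handles the rest of $\Red_2$.
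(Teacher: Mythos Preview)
Your plan is essentially correct and close in spirit to the paper's proof, but the paper organizes the estimate more efficiently and thereby avoids the numerical tightness you flag at the end.

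The paper does not use the completion-of-squares factorization for the upper bound. Instead it uses a \emph{single} decomposition for both bounds: it sets
\[
\xi_0(\tau/2) = 1 + 2\exp(i\pi\tau_{1,1}/2) + 2\exp(i\pi\tau_{2,2}/2),
\]
i.e.\ the five-term partial sum with $m\in\{(0,0),\pm(1,0),\pm(0,1)\}$, and then cites a tail estimate $|\theta_{0,0}(0,\tau/2)-\xi_0(\tau/2)|<0.56$ from an external reference. The cosine observation you make (that $|\re(\tau_{j,j})/2|\leq 1/4$ forces the diagonal contributions to have positive real part) is exactly what gives $\re(\xi_0)\geq 1$, hence $|\xi_0|\geq 1$; and the triangle inequality gives $|\xi_0|\leq 1+4\exp(-\pi\sqrt{3}/4)<2.1$. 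Both claimed constants then drop out immediately as $1-0.56$ and $2.1+0.56$.

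Your factorization into a product of one-variable $\theta_3$'s is a perfectly valid alternative for the upper bound, but at the worst corner it gives roughly $2.67$, just missing $2.66$; this is precisely the gap you anticipate needing the parity trick to close. The paper sidesteps this by not factorizing at all. Likewise, your lower-bound plan (separating out the $\pm(1,\pm 1)$ terms and bounding them in modulus) works, but it amounts to reproving the tail bound $0.56$ that the paper simply quotes. So there is no genuine gap in your argument; the difference is that the paper packages everything into one partial sum and one cited tail estimate, which makes the constants come out exactly without further refinement.
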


\begin{proof} Let
  \begin{displaymath}
    \xi_0(\tau/2) = 1 + 2\exp\paren[\big]{i\pi \im(\tau_{1,1})/2}
    + 2\exp\paren[\big]{i\pi \im(\tau_{2,2})/2}.
  \end{displaymath}
  Since~$\tau\in\Red_2$, the complex number~$\xi_0(\tau/2)$ has
  modulus at least~$1$ and at
  most~$2.1$. By~\cite[Lem.~4.4]{kieffer_SignChoicesAGMtoappear}, we
  have $\abs{\theta_{0,0}(0,\tau/2) - \xi_0(\tau/2)} < 0.56$.
\end{proof}

\begin{prop}
  \label{prop:data-g2}
  Let~$\tau\in \Red_2$. Then, in the notation
  of~\emph{§\ref{subsec:borchardt-good}}, the following bounds apply.
  \begin{enumerate}
  \item In the case of the Borchardt
    sequence~\eqref{eq:theta-as-borchardt-4}
    with~$\lambda = \theta_{0,0}^2(0,\tau)$, we can take $m_0 = 0.069$
    and~$M_0 = 13$.
  \item In the case of the Borchardt
    sequence~\eqref{eq:theta-as-borchardt-4} taken at~$M_j\tau$
    with~$\lambda = \theta_{0,0}^2(0,M_j\tau)$, for
    each~$j\in\{1,2\}$, we can take~$m_0 = 9.7\cdot 10^{-7}$
    and~$M_0= 13$.
  \item In the case of the Borchardt
    sequence~\eqref{eq:theta-as-borchardt-4} taken at~$N_{1,2}\tau$
    with~$\lambda = \theta_{0,0}^2(0,N_{1,2}\tau)$, we can take
    $m_0 = 2.2\cdot 10^{-9}$ and~$M_0 = 13$.
  \end{enumerate}
\end{prop}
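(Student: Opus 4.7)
The plan is to follow the template of \cref{lem:theta-g1-cst,prop:data-g1-fun}: for each of the three Borchardt sequences---indexed by the choice of $\tau' \in \set[\big]{\tau, M_1\tau, M_2\tau, N_{1,2}\tau}$---I would derive explicit bounds on the first term $\paren[\big]{\theta_{0,b}^2(0,\tau')/\theta_{0,0}^2(0,\tau')}_{b\in \I_2}$ and then verify that this tuple lies in $\U_2(m_0, M_0, \alpha)$ for a suitably chosen angle~$\alpha$. Since the $b=0$ component is identically~$1$, what remains is to bound the three components indexed by $b\neq 0$ from both sides and to check that all four components lie in a common quarter plane through the origin; the latter point is essentially automatic from the good-sign-choice result of~\cite{kieffer_SignChoicesAGMtoappear} cited just above the proposition, applied at depth one.

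For case~(1), I would combine \cref{lem:theta-inequalities-g2} with analogous dominant-term-plus-tail decompositions for the remaining theta characteristics~$\theta_{a,b}(0,\tau/2)$ with $(a,b)\neq(0,0)$. Because $\im(\tau_{1,1})\geq \sqrt{3}/2$ and $\im(\tau_{2,2})\leq 8$ on~$\Red_2$, the corresponding theta-series tails converge geometrically and uniformly; passing from~$\tau/2$ to~$\tau$ via the duplication formula and taking worst-case estimates over the three nonzero characteristics then yields the claimed values $m_0 = 0.069$ and $M_0 = 13$.

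For cases~(2) and~(3), the key idea is to use the transformation formulas~\eqref{eq:feedback-g2} together with their analogues for the other characteristics (derived from the general law recalled in~\cite[§II.5]{mumford_TataLecturesTheta1983}) to rewrite each quotient $\theta_{0,b}^2(0,\tau')/\theta_{0,0}^2(0,\tau')$ as a quotient of $\det(C\tau+D)$-factors times a theta quotient evaluated \emph{at~$\tau$ itself}; the latter is already controlled by case~(1). On~$\Red_2$ one has the explicit upper bounds $|\tau_{j,j}| \leq \sqrt{1/4+64}$ and $|\tau_{1,2}^2 - \tau_{1,1}\tau_{2,2}| < 18$, so the ratios of determinant factors are bounded above and below by explicit constants. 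The exponentially small values $m_0 = 9.7\cdot 10^{-7}$ and $m_0 = 2.2\cdot 10^{-9}$ then arise from the worst case where the ratio of determinant factors is of order $|\tau_{j,j}|^{-1}$, respectively $|\tau_{1,2}^2 - \tau_{1,1}\tau_{2,2}|^{-1}$, \emph{and} the theta quotient at~$\tau$ is simultaneously near its minimum. The uniform upper bound $M_0 = 13$ reflects that the upper side is always dominated by the same case-(1) estimate once the shared determinant factor in numerator and denominator cancels out.

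The main obstacle is numerical bookkeeping rather than conceptual difficulty: one has to track the many constants coming from four different transformation formulas, the four theta characteristics, and the worst-case geometry of~$\Red_2$, sharply enough to certify the two significant digits given in each $m_0$. I would organize the argument by first tabulating the four transformation factors on~$\Red_2$, then the pointwise bounds on $|\theta_{a,b}(0,\tau/2)|$ derived as in \cref{lem:theta-inequalities-g2}, and finally multiplying through case by case.
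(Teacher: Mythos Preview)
Your plan for case~(1) is reasonable and in line with how the paper handles such estimates elsewhere; the paper's own proof simply defers to~\cite{kieffer_SignChoicesAGMtoappear} for the numerical constants in all three cases rather than redoing the computation.

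For cases~(2) and~(3), however, there is a genuine error in your proposed mechanism. The theta transformation formula under a symplectic matrix~$N$ with lower blocks~$C,D$ has the shape
\begin{displaymath}
  \theta_{a,b}^2(0,N\tau) = \zeta_{a,b,N}\,\det(C\tau+D)\,\theta_{\sigma_N(a,b)}^2(0,\tau),
\end{displaymath}
where~$\zeta_{a,b,N}$ is a root of unity and~$\sigma_N$ is a permutation of the even characteristics. The automorphy factor $\det(C\tau+D)$ does \emph{not} depend on the characteristic, so in the quotient $\theta_{0,b}^2(0,\tau')/\theta_{0,0}^2(0,\tau')$ it cancels completely. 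What survives is a ratio $\theta_{a_1,b_1}^2(0,\tau)/\theta_{a_2,b_2}^2(0,\tau)$ in which the characteristics $\sigma_N(0,b)$ are in general \emph{not} of the form $(0,\ast)$; hence they are not covered by your case-(1) bounds, contrary to what you assert. Moreover, your proposed source of smallness---a leftover factor of order $\abs{\tau_{j,j}}^{-1}$ or $\abs{\tau_{1,2}^2-\tau_{1,1}\tau_{2,2}}^{-1}$---would on~$\Red_2$ yield at worst something like $10^{-1}$, which even combined with the case-(1) value $m_0=0.069$ comes nowhere near $10^{-7}$ or $10^{-9}$.

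The correct explanation is that the permuted characteristics include ones with nonzero $a$-part, for which $\abs{\theta_{a,b}(0,\tau)}$ behaves like $\exp\paren[\big]{-\tfrac{\pi}{4}a^t\im(\tau)a}$. On~$\Red_2$, where $\im(\tau_{2,2})$ may reach~$8$, such exponentially small factors (and their squares and ratios) are what drive~$m_0$ down to the stated orders of magnitude. This exponential dependence on~$\im(\tau)$, not any polynomial determinant factor, is precisely the content of the good-position analysis in~\cite{kieffer_SignChoicesAGMtoappear} that the paper invokes.
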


\begin{proof}
  We only have to analyze the first term of each of these Borchardt
  sequences. These explicit constants are then derived from the proof
  in~\cite{kieffer_SignChoicesAGMtoappear} that these complex numbers
  are in good position.
\end{proof}

\begin{thm}
  \label{thm:analytic-g2}
  Let $\rho = 1.9\cdot 10^{-23}$, define~$\Theta$ as
  in~\eqref{eq:theta-quotients} for~$g=2$, and let
  \begin{displaymath}
    \V = \bigcup_{\tau\in \Red_2} D_\rho\paren[\big]{\Theta(\tau)}
    % \paren*{\paren*{
    %     \frac{\theta_{0,b}(0,\tau/2)}{\theta_{0,0}(0,\tau/2)}
    %   }_{b\in \I_2\setminus\set{0}}}
    \subset \C^3.
  \end{displaymath}
  Then the operations described in~\emph{§\ref{subsec:general-picture}},
  taking good choices of square roots always, define an analytic
  function~$F\from \V\to \C^2$ such that
  \begin{displaymath}
    F\paren[\big]{\Theta(\tau)}
    % \paren*{\paren*{
    %     \frac{\theta_{0,b}(0,\tau/2)}{\theta_{0,0}(0,\tau/2)}
    %     }_{b\in \I_2\setminus\set{0}}}
       = (\tau_{1,1},\tau_{2,2}, \tau_{1,2}^2- \tau_{1,1}\tau_{2,2})
  \end{displaymath}
  for each~$\tau\in \Red_{2}$. We have~$\norm{F}\leq 4.5\cdot 10^4$
  uniformly on~$\V$.
\end{thm}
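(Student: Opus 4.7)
The strategy is the same backtracking approach used for \cref{thm:analytic-g1-cst}, now applied four times in parallel. The plan is to pick $\rho$ small enough that perturbing $\Theta(\tau)$ by at most $\rho$ perturbs the first term of each of the four Borchardt sequences (at $\tau,M_1\tau,M_2\tau,N_{1,2}\tau$) by at most $m_0/2$, where $m_0$ is the quantity from \cref{prop:data-g2}. The binding constraint will be the $N_{1,2}$ sequence, whose $m_0 = 2.2\cdot 10^{-9}$ is the smallest of the four.

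Given $x\in\C^3$ close to $\Theta(\tau)$, I would first apply the duplication formula to express the normalized squared theta values $\theta_{a,b}^2(0,\tau)/\theta_{0,0}^2(0,\tau/2)$ as explicit quadratic polynomials in $(1,x)$. Using \cref{lem:theta-inequalities-g2} to bound $\abs{\theta_{0,0}(0,\tau/2)}$ from below, together with the classical estimates on the other genus-$2$ theta constants used in~\cite{kieffer_SignChoicesAGMtoappear} (see also~\cite[§6.2.1]{dupont_MoyenneArithmeticogeometriqueSuites2006}, \cite[§7.2]{streng_ComputingIgusaClass2014}), I would track how a perturbation of size~$\rho$ propagates through the successive steps: the quadratic duplication map; a division by the $(0,0)$-component to renormalize by $\theta_{0,0}^2(0,\tau)$ in place of $\theta_{0,0}^2(0,\tau/2)$; the $\Sp_4(\Z)$-transformation formulas for $M_1$, $M_2$, $N_{1,2}$, under which the $\det(C\tau+D)$-factor cancels in these ratios; and a final renormalization producing the first term of the Borchardt input at $N\tau$. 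Each of these steps is Lipschitz with explicit constants derived uniformly from the bounds above, and backsolving will yield the claimed $\rho = 1.9\cdot 10^{-23}$.

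With $\V$ so defined, \cref{prop:good-borchardt-analytic} supplies four analytic Borchardt means $\mu_0,\mu_1,\mu_2,\mu_3\from \V\to\C$, with $\mu_k\paren[\big]{\Theta(\tau)}^{-1}=\theta_{0,0}^2(0,N_k\tau)$ for $N_0=I$, $N_1=M_1$, $N_2=M_2$, $N_3=N_{1,2}$, each of modulus in $[1/M_0,1/m_0]$. Combining their reciprocals with the bounded analytic theta ratios at~$\tau$ produced in the first step, and with the feedback formulas~\eqref{eq:feedback-g2}, assembles $F$ as an analytic map on~$\V$ whose value at~$\Theta(\tau)$ is $(\tau_{1,1},\tau_{2,2},\tau_{1,2}^2-\tau_{1,1}\tau_{2,2})$. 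The uniform upper bound $\norm{F}\leq 4.5\cdot 10^4$ then follows by multiplying $\abs{\mu_k^{-1}}\leq 1/m_0$ by the uniform upper bounds on the theta ratios at~$\tau$; the dominant contribution comes from the $N_{1,2}$ component.

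The main obstacle is the numerical bookkeeping: propagating $\rho$ through duplication, two renormalizations, and the three transformation formulas, while simultaneously controlling both upper bounds on numerators and lower bounds on denominators so that no division ever amplifies the perturbation beyond the $m_0/2$ threshold. The very small $m_0$ for the $N_{1,2}$-sequence, combined with the fact that $\theta_{0,0}^2(0,N_{1,2}\tau)$ itself can be of a comparable order of magnitude, is what forces $\rho$ down to the $10^{-23}$ scale.
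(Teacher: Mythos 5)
Your proposal follows the same route as the paper. The paper's own proof of \cref{thm:analytic-g2} is extremely terse: it says only that the first terms of the four Borchardt sequences of \cref{prop:data-g2} are quotients of the quantities $\theta_{a,b}^2(0,\tau)/\theta_{0,0}^2(0,\tau/2)$ (for even $(a,b)\neq(11,11)$), that these quantities are bounded above and below by \cref{lem:theta-inequalities-g2} and~\cite[Cor.~7.7]{streng_ComputingIgusaClass2014}, and that combining these bounds with \cref{prop:data-g2,prop:good-borchardt-analytic} yields an explicit~$\rho$. Your backtracking plan — propagate a perturbation of size~$\rho$ through duplication, renormalization, and the transformation formulas so that the resulting first terms of the four sequences stay within~$m_0/2$ of their true values, with the $N_{1,2}$-sequence's $m_0$ as the binding constraint, then compose with \cref{prop:good-borchardt-analytic} and the feedback formulas~\eqref{eq:feedback-g2} — matches this exactly.

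One numerical claim in your proposal does not hold up, however. You assert that $\norm{F}\leq 4.5\cdot 10^4$ follows by multiplying $\abs{\mu_k^{-1}}\leq 1/m_0$ by uniform upper bounds on the theta ratios, with the $N_{1,2}$ component dominant. But for that component $m_0 = 2.2\cdot 10^{-9}$, so $1/m_0\approx 4.5\cdot 10^8$ — already four orders of magnitude larger than the stated bound, before any additional factors. The crude estimate $\abs{\mu_k}\geq m_0$ from \cref{prop:good-borchardt-analytic} cannot by itself deliver $4.5\cdot 10^4$; one has to use that at the exact points $\Theta(\tau)$ with $\tau\in\Red_2$ the quantities $\theta_{0,0}^2(0,N_k\tau)$ actually have moderate size, and that perturbing the argument by a radius as tiny as $\rho\approx 10^{-23}$ moves the Borchardt means only slightly. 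Relatedly, your closing remark that ``$\theta_{0,0}^2(0,N_{1,2}\tau)$ itself can be of a comparable order of magnitude'' to $m_0$ is not right: on $\Red_2$ this quantity lies roughly between $1/13$ and a few tens — nowhere near $10^{-9}$. The value $m_0 = 2.2\cdot 10^{-9}$ measures proximity of the first term to the boundary of the good-position cone, not the size of the Borchardt mean; conflating the two is what drives your estimate of $\norm{F}$ off. This is precisely the piece of bookkeeping that would need to be carried out carefully rather than sketched.
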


\begin{proof}
  The first terms of each of the Borchardt sequences analyzed in
  \cref{prop:data-g2} is obtained as quotients of the quantities
  \begin{displaymath}
    \frac{\theta_{a,b}^2(0,\tau)}{\theta_{0,0}^2(0,\tau/2)},    
  \end{displaymath}
  for all even theta characteristics~$(a,b)$ (i.e.~such that
  $a^tb = 0\mod 2$), except~$(11,11)$. The numerator and denominator
  of these quantities is bounded, both above and away from zero, by
  \cref{lem:theta-inequalities-g2}
  and~\cite[Cor.~7.7]{streng_ComputingIgusaClass2014}. Using these
  inequalities combined
  with~\cref{prop:data-g2,prop:good-borchardt-analytic} is sufficient
  to obtain an explicit value of~$\rho$.
\end{proof}

To conclude, we show that the Jacobian of~$F$ is uniformly invertible
by writing its inverse in terms of theta functions. Since~$F$ only
recovers the square of~$\tau_{1,2}$, we use the fact that each of the
fundamental theta constants~$\theta_{0,b}(0,\cdot)$ for~$b\in\I_2$ is
invariant under change of sign of~$\tau_{1,2}$.

\begin{lem}
  \label{lem:theta-from-square-g2}
  Let~$\V\subset\C^3$ be the image of~$\Half_2$
  under~$\tau\mapsto (\tau_{1,1},\tau_{2,2},-\det \tau)$. Then, for
  each~$b\in \I_2$, there exists a unique analytic
  function~$\xi_{0,b}\from \V\to \C$ such
  that
  \begin{displaymath}
    \theta_{0,b}(0,\tau/2) = \xi_{0,b}(
      \tau_{1,1},\tau_{2,2},-\det \tau)
  \end{displaymath}
  for all~$\tau\in \Half_2$.
\end{lem}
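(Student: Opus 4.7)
The plan is to exploit the evenness of $\theta_{0,b}(0,\tau/2)$ in $\tau_{1,2}$ to descend it through the branched double cover $\pi\from \Half_2\to\V$ defined by $\tau\mapsto (\tau_{1,1},\tau_{2,2},-\det\tau)$, being careful at the ramification locus $\{\tau_{1,2}=0\}$.

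First I would verify directly from the theta series~\eqref{eq:theta-function} that $\theta_{0,b}(0,\tau/2)$ is invariant under the involution $\sigma$ of $\Half_2$ sending $\tau_{1,2}$ to $-\tau_{1,2}$: the substitution $m_2\mapsto -m_2$ in the summation index flips the cross term $\tau_{1,2}m_1m_2$ in the quadratic form and leaves the character $(-1)^{m_1 b_1 + m_2 b_2}$ unchanged (since $m_2 b_2\equiv -m_2 b_2\bmod 2$). Observe also that $\pi\circ\sigma=\pi$ and that the nontrivial fibers of $\pi$ are exactly the $\sigma$-orbits $\{\tau,\tau^\sigma\}$, so the invariance is precisely what is needed for descent.

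Second, I would handle the unramified locus. At any $\tau^0\in\Half_2$ with $\tau^0_{1,2}\neq 0$, the partial derivative of $-\det\tau$ with respect to $\tau_{1,2}$ is $2\tau^0_{1,2}\neq 0$, so $\pi$ is a local biholomorphism. I would define $\xi_{0,b}$ on a neighborhood of $\pi(\tau^0)$ by $\theta_{0,b}(0,\cdot/2)\circ\pi^{-1}$; the $\sigma$-invariance from step one makes this independent of the choice of local inverse.

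The delicate step is extending $\xi_{0,b}$ analytically across the ramification locus. At a point $\tau^0$ with $\tau^0_{1,2}=0$, I would write the multivariate Taylor expansion of $\theta_{0,b}(0,\tau/2)$ on a small polydisk, in the coordinates $(\tau_{1,1}-\tau^0_{1,1},\tau_{2,2}-\tau^0_{2,2},\tau_{1,2})$. The $\sigma$-invariance forces every monomial containing an odd power of $\tau_{1,2}$ to have zero coefficient, so the expansion takes the form
\begin{displaymath}
\theta_{0,b}(0,\tau/2) = \sum_{k\geq 0} f_k(\tau_{1,1},\tau_{2,2})\,\tau_{1,2}^{2k},
\end{displaymath}
with each $f_k$ analytic on a common polydisk around $(\tau^0_{1,1},\tau^0_{2,2})$. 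Setting $(u,v,w)=\pi(\tau)$, so that $\tau_{1,2}^2=w+uv$, the substitution yields
\begin{displaymath}
\xi_{0,b}(u,v,w) = \sum_{k\geq 0} f_k(u,v)\,(w+uv)^k,
\end{displaymath}
which converges on a sufficiently small polydisk around $\pi(\tau^0)$ (since the image of the original polydisk satisfies $|w+uv|<r^2$ for some $r>0$) and defines an analytic function there. The local pieces defined on ramified and unramified charts agree wherever both are defined, because they both pull back to $\theta_{0,b}(0,\cdot/2)$ under $\pi$, so they glue into a global analytic function on $\V$ (which is itself open in $\C^3$, as a quick verification of the positivity condition for $\im\tau$ shows). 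Uniqueness is immediate from surjectivity of $\pi$ onto $\V$. The main obstacle is the analyticity at the ramification locus; everything else is essentially formal, and the key trick is the identity $\tau_{1,2}^2 = w+uv$ that converts the even power series in $\tau_{1,2}$ into an honest convergent series in the target coordinates.
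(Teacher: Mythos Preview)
Your argument is correct, and shares with the paper's proof the two essential ingredients: the evenness of $\theta_{0,b}(0,\tau/2)$ in $\tau_{1,2}$ (obtained by the same pairing $m_2\leftrightarrow -m_2$ in the series), and the identity $\tau_{1,2}^2 = w+uv$ that converts an even power series in $\tau_{1,2}$ into an analytic function of the target coordinates.

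The difference is one of economy. The paper does not split into ramified and unramified loci, nor glue local charts. Instead it works directly with the globally convergent theta series: the sum of the $(n_1,n_2)$ and $(n_1,-n_2)$ terms is
\[
(-1)^{n_1b_1+n_2b_2}\exp\!\bigl(i\pi(\tau_{1,1}n_1^2+\tau_{2,2}n_2^2)/2\bigr)\cdot 2\cos(\pi n_1 n_2\,\tau_{1,2}),
\]
which is already an entire function of $\tau_{1,2}^2$; so the whole theta series is, globally on $\Half_2$, a locally uniformly convergent series of analytic functions of $(\tau_{1,1},\tau_{2,2},\tau_{1,2}^2)$. Composing with the polynomial automorphism $(x,y,z)\mapsto(x,y,z-xy)$ of $\C^3$ finishes. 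This mirrors \cref{prop:theta-from-square} in genus~$1$. Your descent argument, by contrast, would apply to \emph{any} $\sigma$-invariant holomorphic function on $\Half_2$, not just one given by an explicit globally convergent series; that generality costs you the extra charts-and-gluing work, which is unnecessary here.
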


\begin{proof}
  In the theta series~\eqref{eq:theta-function} for~$z=0$, the only
  terms involving~$\tau_{1,2}$ are those associated
  with~$(n_1,n_2)\in\Z^2$ both nonzero. Write~$b = (b_1,b_2)$. Then,
  the terms associated with~$(n_1,n_2)$ and~$(n_1,-n_2)$ are
  \begin{displaymath}
    \exp\paren[\big]{i\pi(\tau_{1,1}n_1^2 + \tau_{2,2}n_2^2 \pm 2\tau_{1,2}n_1n_2)}
    (-1)^{n_1b_1 + n_2 b_2},
  \end{displaymath}
  so their sum can be written as a power series in~$\tau_{1,2}^2$
  only.
\end{proof}

Let~$G$ be the following analytic function:
\begin{displaymath}
  G(x,y,z) = \paren*{\frac{\xi_{0,b}(x,y,z)}{\xi_{0,0}(x,y,z)}}_{b\in \I_2\setminus\set{0}}.
\end{displaymath}
It is well-defined on a neighborhood of the image of~$\Red_2$
by~$\tau\mapsto (\tau_{1,1},\tau_{2,2}, -\det\tau)$, and is the
reciprocal of~$F$.

\begin{prop}
  We have~$\nind{dG
      (\tau_{1,1},\tau_{2,2},-\det\tau)}\leq 1.3\cdot 10^4$ for
  all~$\tau\in \Red_2$.
\end{prop}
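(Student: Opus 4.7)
The plan is to follow the template of \cref{prop:inverse-g1-fun}: bound each $\xi_{0,b}$ on a polydisk around $(x_0,y_0,z_0) \defby (\tau_{1,1},\tau_{2,2},-\det\tau)$, invoke \cref{prop:cauchy} to bound the partials of $\xi_{0,b}$ at the center, then apply the quotient rule using the lower bound on $|\xi_{0,0}|$ given by \cref{lem:theta-inequalities-g2}.

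The first and most delicate step is the uniform upper bound on $|\xi_{0,b}(x,y,z)|$ on an explicit polydisk $\D_\rho(x_0,y_0,z_0)$. Given such a point, I would write $\tau'_{1,1}=x$, $\tau'_{2,2}=y$, and $\tau'_{1,2}=\sqrt{xy+z}$ for either branch; by \cref{lem:theta-from-square-g2} the identity $\xi_{0,b}(x,y,z)=\theta_{0,b}(0,\tau'/2)$ is independent of the branch, so it is enough to show that some choice of $\tau'$ lies in $\Half_2$ with entries explicitly controlled, and then to bound the theta series. Since $\tau\in \Red_2$ satisfies the explicit bounds $|\tau_{1,1}|,|\tau_{2,2}|\leq \sqrt{64+\tfrac14}$ and $|\tau_{1,2}|\leq\sqrt{1+\tfrac14}$, a direct computation gives $|\tau'^2_{1,2}-\tau_{1,2}^2|\leq C\rho$ for an explicit constant $C$, and hence $|\tau'_{1,2}-\tau_{1,2}|\leq \sqrt{C\rho}$ in the worst case where $\tau_{1,2}$ vanishes.

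Next, I would choose $\rho$ small enough that $\im\tau'$ stays positive definite (using $\im(\tau'_{j,j})\geq\tfrac{\sqrt 3}{2}-\rho$ and the resulting bound on $|\im\tau'_{1,2}|$), and that the absolute theta series for $\theta_{0,b}(0,\tau'/2)$ admits an explicit upper bound $M$ by a direct majorization as in \cref{lem:theta-inequalities-g2} (the relevant estimates already appear in the references \cite{streng_ComputingIgusaClass2014,kieffer_SignChoicesAGMtoappear}, together with \cref{lem:theta-inequalities-g2}). \cref{prop:cauchy} with $r=3$, $n=1$ applied on $\D_{2\rho}(x_0,y_0,z_0)$ then yields $\nind{d\xi_{0,b}(x_0,y_0,z_0)}\leq (8/\rho)M$. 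Finally, the quotient rule combined with the lower bound $|\xi_{0,0}(x_0,y_0,z_0)|=|\theta_{0,0}(0,\tau/2)|>0.44$ from \cref{lem:theta-inequalities-g2} and the upper bounds on $|\xi_{0,b}(x_0,y_0,z_0)|$ gives the claimed bound $\nind{dG(x_0,y_0,z_0)}\leq 1.3\cdot 10^4$.

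The main obstacle is the worst-case behaviour of $\tau'_{1,2}$ when $\tau_{1,2}$ is small or vanishes on $\Red_2$: a perturbation of $(x,y,z)$ by $\rho$ only pins down $\tau'_{1,2}$ up to $O(\sqrt{\rho})$, so $\rho$ must be chosen small enough that even this $\sqrt{\rho}$-deviation leaves $\tau'$ in a region where the theta series is bounded by an explicit constant compatible with the target $1.3\cdot 10^4$. Once $\rho$ is fixed small enough for that, all remaining numerical verifications are routine.
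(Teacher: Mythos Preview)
Your proposal follows the same template as the paper: bound each $\xi_{0,b}$ on a polydisk around $(\tau_{1,1},\tau_{2,2},-\det\tau)$, apply \cref{prop:cauchy}, and finish with the quotient rule together with the lower bound $|\xi_{0,0}|>0.44$ from \cref{lem:theta-inequalities-g2}. The one substantive difference is how the radius $\rho$ is handled.

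You frame the obstacle as the $\sqrt{\rho}$-deviation of $\tau'_{1,2}$ near $\tau_{1,2}=0$ and propose to take $\rho$ small. But this sets up a tension you do not resolve: the Cauchy bound scales like $M/\rho$, so shrinking $\rho$ directly degrades the target constant, and with the crude estimate $|\im\tau'_{1,2}|\leq |\tau'_{1,2}|$ the product $M/\rho$ does not stay below what is needed for $1.3\cdot10^4$ over all of $\Red_2$. The paper avoids this entirely: it fixes $\rho=\tfrac14$ and bounds the smallest eigenvalue of $\im(\tau')$ directly as $\det(\im\tau')/\tr(\im\tau')\geq 0.12$. The key point is that $\det(\im\tau')$ depends only on $\im(\tau'_{1,2})^2$, and for either branch one has $\im(\sqrt{w})^2=\tfrac12\bigl(|w|-\re w\bigr)$ with $w=xy+z$, which is controlled by $(x,y,z)$ alone without extracting a square root; so there is no $\sqrt{\rho}$-loss at all. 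With that eigenvalue bound, a lemma from~\cite{kieffer_SignChoicesAGMtoappear} gives $|\xi_{0,b}|\leq 9.28$ on the polydisk, hence $\nind{d\xi_{0,b}}\leq 149$ by \cref{prop:cauchy}, and the stated bound on $\nind{dG}$ follows. Your route can be salvaged with a more careful two-regime analysis in $|\tau_{1,2}|$, but the paper's direct eigenvalue argument is both shorter and what actually produces the constant $1.3\cdot10^4$.
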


\begin{proof}
  Fix~$\rho = 1/4$, and let us compute an upper bound
  on~$\abs{\xi_{0,b}(x,y,z)}$ for each point
  $(x,y,z) \in
  \D_\rho\paren[\big]{(\tau_{1,1},\tau_{2,2},-\det\tau)}$.
  Then~$x,y,z$ are of the form~$(\tau'_{1,1},\tau'_{2,2}, -\det\tau')$
  for some~$\tau'\in \Half_2$; the smallest eigenvalue of~$\im(\tau')$
  is bounded from below by
  \begin{displaymath}
    \frac{\det(\tau')}{\tr(\tau')} \geq 0.12.
  \end{displaymath}
  By the proof of~\cite[Lem.~4.7]{kieffer_SignChoicesAGMtoappear}, the
  function~$\abs{\xi_{0,b}}$ is uniformly bounded above by~$9.28$ on
  the disk we consider. By \cref{prop:cauchy}, we have
  \begin{displaymath}
    \nind{d\xi_{0,b}
        (\tau_{1,1},\tau_{2,2},-\det\tau)} \leq 149
  \end{displaymath}
  for each~$b\in \I_2$. The upper bound on~$\nind{dG}$ then
  follows from \cref{lem:theta-inequalities-g2}.
\end{proof}

\begin{cor}
  For all~$\tau\in \Red_2$, the Newton scheme described
  in~\emph{§\ref{subsec:general-picture}} to compute theta constants
  at~$\tau$ will converge starting from approximations
  of~$\Theta(\tau)$ to~$300$ bits of precision.
\end{cor}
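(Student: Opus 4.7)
The plan is to apply \cref{thm:practical-newton} to the analytic function $F\from \V\to\C^3$ of \cref{thm:analytic-g2}, taken at $x_0 = \Theta(\tau)$ for each $\tau\in\Red_2$, with ambient dimension $r = 2^g-1 = 3$. All three inputs demanded by that theorem are already in hand. The polydisk $\D_\rho(x_0)$ of radius $\rho = 1.9\cdot 10^{-23}$ lies in $\V$ with $\nind{F} \leq M := 4.5\cdot 10^4$ throughout, by \cref{thm:analytic-g2}; \cref{prop:cauchy} then automatically supplies the uniform bounds on $\nind{dF}$ and $\nind{d^2F}$ that the proof of \cref{thm:practical-newton} uses internally. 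The bound $\nind{dF(x_0)^{-1}} \leq B_3 := 1.3\cdot 10^4$ is provided by the immediately preceding proposition: since $F$ and $G$ are locally mutually inverse around $(\tau_{1,1},\tau_{2,2},-\det\tau)$ by construction, the chain rule yields $dF(x_0)^{-1} = dG(F(x_0))$, and this last linear map has norm at most $1.3\cdot 10^4$. Finally, the evaluation algorithm $\A$ required by \cref{thm:practical-newton} exists with cost $C(N) = O\paren{\M(N)\log N}$ binary operations by the remark at the end of \cref{sec:borchardt}, combined with the closed-form symplectic transformations used in~\eqref{eq:feedback-g2}.

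With these values, the starting precision prescribed by \cref{thm:practical-newton} is
\begin{displaymath}
  n_0 = 2\ceil[\big]{\log_2(2(r+1)M/\rho)} + 2\ceil[\big]{\log_2(B_3)} + 4.
\end{displaymath}
A short numerical evaluation gives $\ceil[\big]{\log_2(8\cdot 4.5\cdot 10^4/(1.9\cdot 10^{-23}))} \leq 94$ and $\ceil[\big]{\log_2(1.3\cdot 10^4)} = 14$, so $n_0$ is bounded above by $2\cdot 94 + 2\cdot 14 + 4 = 220$, comfortably below the advertised threshold of $300$ bits; the corollary then follows directly from \cref{thm:practical-newton}.

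The only non-routine point is checking that the concrete Newton scheme described in~§\ref{subsec:general-picture} coincides, step for step, with the abstract scheme of~§\ref{sec:newton} applied to this specific $F$. This is essentially bookkeeping: by construction, $F$ is the composition of duplication formulas, good-sign Borchardt means at $\tau$, $M_1\tau$, $M_2\tau$, $N_{1,2}\tau$, and the feedback identities~\eqref{eq:feedback-g2} that extract $(\tau_{1,1},\tau_{2,2},\tau_{1,2}^2-\tau_{1,1}\tau_{2,2})$; the Jacobian system linearized by finite differences in the concrete scheme is precisely $\FD_\eta F$. I do not expect any genuine obstacle beyond making sure the numerical constants are used consistently and that the safety margin between the computed value of $n_0$ and $300$ absorbs any rounding in the bounds collected above.
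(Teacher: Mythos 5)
Your proposal is correct and follows exactly the paper's (implicit) argument: combine \cref{thm:analytic-g2} and the immediately preceding proposition bounding $\nind{dG}$ with \cref{thm:practical-newton}, feed in $r=3$, $\rho = 1.9\cdot 10^{-23}$, $M = 4.5\cdot 10^4$, $B_3 = 1.3\cdot 10^4$, and evaluate the $n_0$ formula. Your arithmetic checks out ($n_0 = 2\cdot 94 + 2\cdot 14 + 4 = 220 \leq 300$), and the remaining side conditions of \cref{thm:practical-newton} ($\rho \leq 1$, $M,B_3\geq 1$, existence of a quasi-linear evaluation algorithm for $F$ from the remark closing \cref{sec:borchardt}, computability of $F(\Theta(\tau)) = (\tau_{1,1},\tau_{2,2},-\det\tau)$ from the input) are all verified; the paper's threshold of $300$ is just a round conservative figure above $n_0$.
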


\subsection{Higher genera}

In higher genera, including the case of genus~$2$ theta functions, we
are no longer able to show that the linear systems appearing in the
Newton schemes are invertible, nor a fortiori are we able to give an
explicit upper bound on the norm of their inverse Jacobians. Let us
shortly explain what the obstacle is.

In order to build a Newton scheme, the linearized system must be
square; however, as~$g$ grows, the number~$r$ of theta quotients
(either~$2^g-1$ in the case of theta constants, or~$2^{g+1}-2$ in the
case of theta functions) becomes greater than the dimension
of~$\Half_g$ or~$\C^g\times\Half_g$ respectively. Two ways around
this issue are suggested
in~\cite[§3.5]{labrande_ComputingThetaFunctions2016}:
\begin{enumerate}
\item One could keep all theta quotients as variables, and simply
  consider more symplectic matrices~$N$ to provide suitable feedback; or
\item One could perform Newton iterations not on the whole of~$\C^r$,
  but rather on the algebraic subvariety of~$\C^r$ obtained as the
  image of~$\Half_g$ or~$\C^g\times\Half_g$ by the fundamental theta
  quotients~\eqref{eq:theta-quotients}
  or~\eqref{eq:theta-quotients-2}.
\end{enumerate}

A fundamental obstacle to the second idea seems to be that the
algebraic subvariety of~$\C^r$ on which the theta quotients lie is not
smooth everywhere in general: consider for instance the Kummer
equation~\cite[§3.1]{gaudry_FastGenusArithmetic2007} in the case of
genus~$2$ theta constants. On the other hand, it seems very likely
that the first possibility can give rise to suitably invertible
systems, since much freedom is allowed in the choice of symplectic
matrices. However, the inverse of~$F$ will no longer be described
completely by theta functions, so the method we employed above to
prove the invertibility of the linearized systems no longer applies.

Despite the current lack of a uniform algorithm, the following
approach is available to certify the result of Newton's method to
evaluate theta constants (resp.~functions) at a given
$\tau\in \Half_g$ (resp.~$(z,\tau)\in \C^g\times\Half_g$). A finite
amount of precomputation, along with the results
of~\cref{sec:borchardt}, will allow us to compute real
numbers~$\rho>0$ and~$M>0$ such that the function~$F$ appearing in
Newton's method is analytic with~$\abs{F}\leq M$ on a polydisk of
radius~$\rho$ around the desired theta values. This gives upper bounds
on the norms of derivatives of~$F$ on a slightly smaller polydisk; in
particular, we can compute a certified approximation of~$dF^{-1}$
using finite differences, and check that it is indeed invertible. This
provides all the necessary data to run certified Newton
iterations.

\section{A uniform algorithm for genus~2 theta constants}
\label{sec:unif}

We have shown in~§\ref{subsec:g2} that genus~$2$ theta constants can
be evaluated on the compact subset~$\Red_2$ of~$\Half_2$ in uniform
quasi-linear time in the required precision, in a certified way. Using
this algorithm as a black box, we now design an algorithm to evaluate
genus~$2$ theta constants on the whole Siegel fundamental
domain~$\Fund_2$ in uniform quasi-linear time, generalizing the
strategy presented in~\cite[Thm.~5]{dupont_FastEvaluationModular2011},
\cite[§4.2]{labrande_ComputingJacobiTheta2018} in the genus~$1$ case:
we use duplication formulas to replace the input by another period
matrix which either lies in~$\Red_2$, or is sufficiently close to the
cusp, in which case the naive algorithm can be applied. We will use
the following transformations: for every $\tau \in\Half_2$, write
\begin{displaymath}
  D_1(\tau) = \frac\tau2
  \quad\text{and}\quad
  D_2(\tau) = \mat{2 \tau_{1,1}}{\tau_{1,2}}{\tau_{1,2}}{\frac12 \tau_{2,2}}.
\end{displaymath}
Recall that every~$\tau\in\Fund_2$ satisfies the following inequalities:
\begin{equation}
  \label{eq:F2}
  \begin{cases}
    \abs{\re(\tau_{i,j})} \leq \frac12 \quad \text{for each } 1\leq i, j\leq 2,\\[2pt]
    2\abs{\im(\tau_{1,2})} \leq \im(\tau_{1,1})\leq \im(\tau_{2,2}),\\[2pt]
    \abs{\tau_{i,i}}\geq 1 \quad \text{for each } i\in\{1,2\}. \phantom{\frac12}
  \end{cases}
\end{equation}
We also define
\begin{displaymath}
  \mathcal{J} = \paren[\big]{(00,00),(00,01),(10,00),(10,01)} \in (\I_2\times\I_2)^4,
\end{displaymath}
which is the tuple of theta characteristics corresponding to the
indices~$0,2,4,6$ in Dupont's
indexation~\cite[§6.2]{dupont_MoyenneArithmeticogeometriqueSuites2006}. For
each~$\tau\in \Half_2$, the duplication formula allows us to compute
all squares of theta constants at~$\tau$ given the theta
values~$\theta_{0,b}\paren[\big]{D_1(\tau)}$ for all~$b\in \I_2$. By
applying the theta transformation formula to the symplectic matrix
\begin{displaymath}
  \left(
    \begin{matrix}
      0&0&1&0\\0&1&0&0\\-1&0&0&0\\0&0&0&1
    \end{matrix}
  \right),
\end{displaymath}
we also see that all squares of theta constants at~$\tau$ can be
computed from the theta values~$\theta_{a,b}\paren[\big]{D_2(\tau)}$
for~$(a,b)\in\mathcal{J}$. It turns out that these complex numbers are
in good position; hence, they are easily determined from their squares
up to a harmless global change of sign.

\begin{lem}
  \label{lem:dupl}
  Let $\tau\in\Half_2$ be a matrix satisfying~\eqref{eq:F2}.
  \begin{enumerate}
  \item If~$D_1(\tau)$ satisfies~\eqref{eq:F2}, then the complex
    numbers~$\paren[\big]{\theta_{0,b}(D_1(\tau))}_{b\in \I_2}$ are in good position.
  \item If $D_2(\tau)$ satisfies~\eqref{eq:F2}, except that the real
    part of $D_2(\tau)_{1,1}$ is allowed to be smaller than~$1$
    instead of~$\frac12$, then the complex numbers
    $\paren[\big]{\theta_{a,b}(D_2(\tau))}_{(a,b)\in\mathcal{J}}$ are in good position.
  \end{enumerate}
\end{lem}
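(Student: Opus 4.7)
The plan is to reduce both statements to a familiar pattern from~\cref{sec:dupont}: expand the theta series, isolate a small number of dominant terms, and bound the tail by a geometric majorant using the lower bounds on~$\im(\tau_{j,j})$. Good position for the four resulting values then follows once one verifies that the dominant terms all cluster in an open sector of opening strictly less than~$\pi/2$, with tails small enough to preserve this.

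Part~(1) is essentially a direct corollary of the analysis already carried out in~\cite{kieffer_SignChoicesAGMtoappear}. The hypothesis that $D_1(\tau)$ satisfies~\eqref{eq:F2} places $D_1(\tau)$ in the same regime considered there, and the characteristics $(0,b)$ for $b \in \I_2$ are precisely those treated. The only caveat is that~\eqref{eq:F2} omits the upper bounds on $\im(\tau_{j,j})$ that appear in the definition of~$\Red_2$, but those bounds serve only quantitative convergence estimates; the good-position conclusion becomes easier as the imaginary parts grow, since subleading terms are then more strongly suppressed.

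Part~(2) requires a genuine adaptation, since $\mathcal{J}$ contains characteristics with $a = (1,0)\neq (0,0)$. The key structural observation is that every $(a,b)\in \mathcal{J}$ has $b_1 = 0$, so the dominant terms of $\theta_{a,b}(D_2(\tau))$ depend only on~$a$. For $a = (0,0)$, the summand at $m = 0$ equals~$1$. For $a = (1,0)$, the pair of summands $m\in\{(0,0), (-1,0)\}$ contributes $2\exp(i\pi D_2(\tau)_{1,1}/4) = 2\exp(i\pi \tau_{1,1}/2)$, whose argument $\pi\re(\tau_{1,1})/2$ lies in $[-\pi/4, \pi/4]$ by~\eqref{eq:F2} applied to~$\tau$. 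Thus, before tail perturbations, the four values cluster around at most two half-lines spanning a range of at most $\pi/4$. One then bounds the tails by a geometric sum in $\exp(-\pi\im(D_2(\tau)_{j,j}))$, using the lower bounds $\im(D_2(\tau)_{1,1}) = 2\im(\tau_{1,1}) \geq \sqrt{3}$ (from~\eqref{eq:F2} for $\tau$) and $\im(D_2(\tau)_{2,2}) \geq \sqrt{3}/2$ (from~\eqref{eq:F2} for $D_2(\tau)$ with the relaxed real-part condition), to conclude that each value's argument is perturbed by well under~$\pi/8$, leaving all four inside a common open quarter plane.

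The main obstacle is the numerical estimate for Part~(2): the leading-term spread is already $\pi/4$, leaving a margin of only about $\pi/4$ for tails, so one must track the constants in the geometric series carefully, analogously to the explicit calculations in \cref{lem:theta-inequalities-g1-fun} and \cref{lem:theta-inequalities-g2}. The relaxed hypothesis on $\re(D_2(\tau)_{1,1})$ plays no direct role in the argument sketched above; it is stated only because $\re(D_2(\tau)_{1,1}) = 2\re(\tau_{1,1})$ can attain values in $[-1,1]$, so that $D_2(\tau)$ falls just outside the standard reduction box defined by~\eqref{eq:F2}.
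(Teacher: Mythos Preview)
Your sketch is correct and follows the standard route: isolate dominant terms of the theta series, bound the tail geometrically, and check that the resulting arguments fit in an open quarter plane. The paper itself gives no argument here; it simply cites \cite[Prop.~7.7]{streng_ComplexMultiplicationAbelian2010} for part~(1) and \cite[Lem.~5.2]{kieffer_SignChoicesAGMtoappear} for part~(2), and your sketch is essentially a reconstruction of what those references contain. One minor point: you say the relaxed bound on $\re(D_2(\tau)_{1,1})$ plays no direct role, but you do implicitly use the hypothesis that $D_2(\tau)$ satisfies the remaining conditions of~\eqref{eq:F2}, in particular $\abs{D_2(\tau)_{2,2}}\geq 1$ and the Minkowski ordering $\im(D_2(\tau)_{1,1})\leq \im(D_2(\tau)_{2,2})$, to get the lower bound $\im(D_2(\tau)_{2,2})\geq \sqrt{3}/2$ and to control the cross terms; these are genuinely stronger than what~\eqref{eq:F2} for~$\tau$ alone would give.
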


\begin{proof}
  See~\cite[Prop.~7.7]{streng_ComplexMultiplicationAbelian2010}
  and~\cite[Lem.~5.2]{kieffer_SignChoicesAGMtoappear}.
\end{proof}

\begin{thm}
  \label{thm:theta-F2}
  There exists an algorithm which, given $\tau\in\Half_2$
  satisfying~\eqref{eq:F2} and given $N\geq 1$, computes the squares
  of theta constants at~$\tau$ to precision~$N$ within
  $O(\M(N)\log N)$ binary operations, uniformly in~$\tau$.
\end{thm}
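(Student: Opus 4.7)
The plan is to follow the genus~$1$ strategy of~\cite[§4.2]{labrande_ComputingJacobiTheta2018}: given $\tau$ satisfying~\eqref{eq:F2} and a precision $N$, classify $\tau$ into one of three regimes and act accordingly. If $\det\im(\tau)$ is sufficiently large, of order $N^2$, the naive algorithm of~\cite{deconinck_ComputingRiemannTheta2004} requires only $O(1)$ terms to reach precision~$N$ and thus costs $O(\M(N))$. If $\tau$ lies in the compact region $\Red_2$, the algorithm of~§\ref{subsec:g2}, instantiated through~\cref{thm:practical-newton}, computes the squares of theta constants at $\tau$ in $O(\M(N)\log N)$ binary operations. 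Otherwise, we apply one of the duplication maps $D_1$ or $D_2$, recurse on the new matrix, and reconstruct the theta values at $\tau$ via the duplication formula together with~\cref{lem:dupl}, which determines the required square roots uniquely up to a harmless global sign.

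The choice between $D_1$ and $D_2$ is dictated by the shape of $\im(\tau)$. I would apply $D_1$ whenever $\im(\tau_{1,1}) \geq 2$: the resulting matrix satisfies $|\tau_{i,i}/2|\geq \im(\tau_{i,i})/2 \geq 1$, and the other conditions of~\eqref{eq:F2} follow from linearity after a possible translation of the real parts by integers. If instead $\im(\tau_{1,1}) < 2$ and $\im(\tau_{2,2}) > 8$, then~\eqref{eq:F2} forces $\im(\tau_{2,2}) \geq 4\im(\tau_{1,1})$, so $D_2$ produces a matrix with $2\im(\tau_{1,1}) \leq \im(\tau_{2,2})/2$ and $2\im(\tau_{1,1})\geq\sqrt{3}$, satisfying~\eqref{eq:F2} once the real part of the $(1,1)$ entry is normalized, which is permitted by the relaxation in~\cref{lem:dupl}. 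These two cases exhaust the complement of $\Red_2$ and the cusp regime.

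In both cases the chosen duplication strictly divides $\im(\tau_{2,2})$ by $2$. Starting from the complementary cusp condition $\det\im(\tau) = O(N^2)$ together with~\eqref{eq:F2}, we have $\im(\tau_{2,2}) = O(N^2)$, so the recursion reaches $\Red_2$ in $O(\log N)$ steps. Each step comprises one recursive call and a bounded number of square roots and arithmetic operations at precision $N + O(\log N)$, the extra bits absorbing the cumulative precision loss over the $O(\log N)$ levels, for a cost of $O(\M(N))$. Summing gives
\begin{displaymath}
  O(\M(N)\log N) + O(\log N)\cdot O(\M(N)) = O(\M(N)\log N),
\end{displaymath}
uniformly in $\tau$. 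The main obstacle is the case analysis of the second paragraph together with the verification that all quantities involved in a duplication step stay bounded away from $0$ and $\infty$, so that error amplification is $O(1)$ per step; the necessary bounds are provided by the good-position results of~\cite{kieffer_SignChoicesAGMtoappear} and by the inequalities already employed in~§\ref{subsec:g2}.
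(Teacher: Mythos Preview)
Your overall strategy matches the paper's: classify $\tau$, then either run the naive algorithm near the cusp, Newton's method on~$\Red_2$, or recurse via a duplication step. Your interleaved use of $D_1$ and $D_2$ differs from the paper, which applies all $D_2$ steps first and all $D_1$ steps afterwards; your organization has the pleasant side effect that $\im(\tau_{1,1})$ stays bounded by~$4$ throughout the recursion, so every theta value appearing in a duplication step is bounded away from zero by an absolute constant and your $O(1)$-per-step precision claim is justified. (The paper's batched $D_2$ phase lets $\im(\tau_{1,1})$ grow to order~$N$, which is why it budgets $O(N)$ bits of total precision loss rather than $O(\log N)$.)

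However, there is a genuine gap in your cusp case. The criterion $\det\im(\tau)=\Omega(N^2)$ does \emph{not} guarantee that the naive algorithm runs in $O(\M(N))$ time: the number of lattice points $n\in\Z^2$ with $n^t\im(\tau)\,n\lesssim N$ is controlled by the smallest diagonal entry of~$\im(\tau)$, not only by its determinant. For instance, if $\im(\tau_{1,1})=1$, $\im(\tau_{1,2})=0$, and $\im(\tau_{2,2})=N^3$, then $\det\im(\tau)=N^3\gg N^2$ and you declare the cusp, yet the partial sum must include all $(n_1,0)$ with $|n_1|\leq c\sqrt N$, so the naive algorithm costs $\Omega\paren[\big]{\M(N)\sqrt N}$. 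The paper's organization is essential here: it applies $D_2$ repeatedly---doubling $\im(\tau_{1,1})$ at each step---until either $\im(\tau_{1,1})\geq CN$ (the correct cusp criterion, which forces both diagonal entries to be $\geq CN$ and hence genuinely $O(1)$ terms in the series) or the two diagonal entries are balanced within a factor~$4$, and only then switches to~$D_1$. Your interleaved recursion, by design, keeps $\im(\tau_{1,1})<4$ forever and can therefore never reach such a cusp state; and simply replacing your criterion by $\im(\tau_{1,1})\geq CN$ does not rescue the argument either, since its complement $\im(\tau_{1,1})<CN$ no longer implies $\im(\tau_{2,2})=O(N^2)$, so the recursion depth becomes unbounded in~$N$.
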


\begin{proof}
  Fix an arbitrary absolute constant~$C>0$ (for instance~$10$); in
  practice, this constant should be adjusted to minimize the
  algorithm's running time. First, let~$k_2$ be the smallest integer
  such that
  \begin{displaymath}
    2^{k_2} \im(\tau_{1,1}) \geq \min\{CN,\, 2^{-k_2-2} \im(\tau_{2,2})\},
  \end{displaymath}
  and let $\tau'$ be the matrix obtained after applying~$k_2$
  times~$D_2$ to~$\tau$ and reducing the real part at each step.  In
  order to compute theta constants at~$\tau$ to precision~$N$, we can
  compute theta constants at~$\tau'$ to some precision $N'\geq N$,
  then apply~$k_2$ times the duplication formula; all sign choices are
  good by \cref{lem:dupl}. We have $k_2 = O(\log N)$, and the total
  precision loss taken in extracting square roots is~$O(N)$
  by~\cite[Prop.~7.7]{streng_ComputingIgusaClass2014}. Therefore, the
  total precision loss is~$O(N)$ bits, and we can choose~$N' = C' N$
  where $C'$ is an absolute constant.

  Two cases arise now. If $\im(\tau'_{1,1}) \geq CN$, then we also
  have $\im(\tau'_{2,2})\geq CN$; therefore we can compute theta
  constants at~$\tau'$ to precision~$N'$ using~$O\paren[\big]{\M(N)}$
  operations with the naive algorithm. Otherwise, we have
  \begin{displaymath}
    \im(\tau'_{1,1}) \leq \im(\tau'_{2,2}) \leq 4 \im(\tau'_{1,1}) \leq 4CN.
  \end{displaymath}
  Therefore we can find an integer $k_1 = O(\log N)$ such that
  $\tau'' = D_1^{k_1}(\tau')$ belongs to~$\Red_2$, by definition of
  this compact set. We will compute theta constants at~$\tau''$ to
  some precision~$N''\geq N'$ using the Newton scheme described
  in~§\ref{sec:dupont}, then use the duplication formula~$k_1$
  times. Since~$O(1)$ bits of precision are lost each time we apply
  the duplication formula
  by~\cite[Prop.~7.7]{streng_ComputingIgusaClass2014}, we can also
  take~$N'' = C''N$ where~$C''$ is an absolute constant. Therefore,
  the whole algorithm can be executed in~$O(\M(N)\log N)$ binary
  operations.
\end{proof}

\begin{rem}
  In order to implement this algorithm in a certified way, one could
  use~\cite[Prop.~7.7]{streng_ComputingIgusaClass2014} more explicitly
  to track down an acceptable value of~$C''$. Another possibility is to
  start with~$C'' = 1.1$, say, and attempt to run this algorithm using
  interval arithmetic to obtain real-time upper bounds on the
  precision losses incurred. If the final precision we obtain is not
  satisfactory, we may simply double~$C''$ and restart. The resulting
  algorithm still has a uniform quasi-linear cost.
\end{rem}

\bibliographystyle{abbrv}
\bibliography{theta-unif}

\end{document}